\newcommand{\cz}[2]{C_{#1}(#2)}
\newcommand{\cstar}[2]{C^{*}_{#1}(#2)}
\newcommand{\n}[2]{N_{#1}(#2)}
\newcommand{\op}[2]{O_{#1}(#2)}
\newcommand{\oupper}[2]{O^{#1}(#2)}
\newcommand{\ff}[1]{F(#1)}
\newcommand{\compp}[1]{\operatorname{comp}(#1)}
\newcommand{\comp}[2]{\operatorname{comp}_{#1}(#2)}
\newcommand{\compsol}[1]{\comp{\mathrm{sol}}{#1}}
\newcommand{\compasol}[1]{\comp{A,\mathrm{sol}}{#1}}
\newcommand{\layerr}[1]{E(#1)}
\newcommand{\layer}[2]{E_{#1}(#2)}
\newcommand{\layersol}[1]{\layer{\mathrm{sol}}{#1}}
\newcommand{\gfitt}[1]{F^{*}(#1)}
\newcommand{\sol}[1]{\operatorname{sol}(#1)}
\newcommand{\zz}[1]{Z(#1)}
\newcommand{\card}[1]{|\,#1\,|}
\newcommand{\primes}[1]{\pi(#1)}
\newcommand{\syl}[2]{\operatorname{Syl}_{#1}(#2)}
\newcommand{\aut}[1]{\operatorname{Aut}(#1)}
\newcommand{\out}[1]{\operatorname{Out}(#1)}
\newcommand{\gen}[2]{\langle \;#1 \mid #2\; \rangle}
\newcommand{\listgen}[1]{\langle \;#1\; \rangle}
\newcommand{\hyp}[1]{\operatorname{\rm Hyp}(#1)}
\newcommand{\set}[2]{\{ \;#1 \mid #2\;\}}
\newcommand{\listset}[1]{\{ \,#1\, \}}
\newcommand{\stab}[2]{\operatorname{Stab}_{#1}(#2)}
\newcommand{\field}[1]{{\mathbb #1}}
\newcommand{\fieldchar}[1]{{\operatorname{char}}\,\field{#1}}
\newcommand{\gf}[1]{\operatorname{GF}(#1)}
\newcommand{\pring}[2]{\field{#1}[#2]}
\newcommand{\gfpring}[2]{\gf{#1}[#2]}
\newcommand{\gl}[2]{\operatorname{GL}_{#1}(#2)}
\newcommand{\ltwo}[1]{\operatorname{L}_{2}(#1)}
\newcommand{\sltwo}[1]{\operatorname{SL}_{2}(#1)}
\newcommand{\uthree}[1]{\operatorname{U}_{3}(#1)}
\newcommand{\sz}[1]{\operatorname{Sz}(#1)}
\newcommand{\spfour}[1]{\operatorname{Sp}_{4}(#1)}
\newcommand{\gtwo}[1]{G_{2}(#1)}
\newcommand{\twistedgtwo}[1]{{^{2}G_{2}(#1)}}
\newcommand{\twistedffour}[1]{{^{2}F_{4}(#1)}}
\newcommand{\chevgroup}[2]{{^{#1}\mathcal L(#2)}}
\newcommand{\chev}[1]{\operatorname{chev}(#1)}
\newcommand{\alt}[1]{\operatorname{Alt}(#1)}
\newcommand{\sym}[1]{\operatorname{Sym}(#1)}
\newcommand{\badfour}{\listset{\ltwo{2^{r}}, \ltwo{3^{r}}, \uthree{2^{r}}, \sz{2^{r}}}}
\newcommand{\kernel}[1]{\ker #1}
\newcommand{\kernelon}[2]{\ker(#1\; \mbox{on}\; #2)}
\newcommand{\isomorphic}{\cong}
\newcommand{\normal}{\unlhd\,}
\newcommand{\subnormal}{\unlhd\unlhd\,}
\newcommand{\characteristic}{\operatorname{char}}
\newcommand{\br}[1]{\overline{#1}}
\newcommand{\nonid}{^\#}
\newtheorem{Theorem}{Theorem}[section]
\newtheorem*{UnnumberedTheorem}{Theorem}
\newtheorem{Lemma}[Theorem]{Lemma}
\newtheorem{Corollary}[Theorem]{Corollary}
\newtheorem{Hypothesis}[Theorem]{Hypothesis}
\newtheorem{Claim}{Claim}
\theoremstyle{definition}
\newtheorem{Definition}[Theorem]{Definition}
\theoremstyle{remark}
\newtheorem*{UnnumberedRemark}{Remark}
\numberwithin{equation}{section}
\begin{document}

\title{Automorphisms of $K$-groups I}


\author{Paul Flavell}
\address{The School of Mathematics\\University of Birmingham\\Birmingham B15 2TT\\Great Britain}
\email{P.J.Flavell@bham.ac.uk}
\thanks{A considerable portion of this research was done whilst the author was in receipt
of a Leverhulme Research Project grant and during visits to the Mathematisches Seminar,
Christian-Albrechts-Universit\"{a}t, Kiel, Germany.
The author expresses his thanks to the Leverhulme Trust for their support and
to the Mathematisches Seminar for its hospitality.}

\subjclass[2010]{Primary 20D45 20D05 20E34 }

\date{}

\begin{abstract}
    This is the first in a sequence of papers that will develop the
    theory of automorphisms of nonsolvable finite groups.
    The sequence will culminate in a new proof of McBride's
    Nonsolvable Signalizer Functor Theorem,
    which is one of the fundamental results required for the
    proof of the Classification of the Finite Simple Groups.
\end{abstract}

\maketitle
\section{Introduction}\label{intro}
The theory of automorphisms of finite solvable groups is very well developed.
A high point of that theory is Glauberman's
Solvable Signalizer Functor Theorem \cite{G}.
This is the first in a sequence of papers that will develop the
theory of automorphisms of arbitrary finite groups and will
culminate in a new proof of McBride's Nonsolvable Signalizer Functor Theorem
\cite{McB1, McB2}.
This proof will differ significantly from McBride's.
It will be modelled on the author's proof of the
Solvable Signalizer Functor Theorem \cite{F2}.

The Signalizer Functor Theorems played a crucial role in the
first generation proof of the Classification of the Finite Simple Groups.
They are also background results needed for the new proof of the
Classification in the Gorenstein-Lyons-Solomon book series \cite{GLS}.

It is not however the sole aim of this sequence of papers to prove
the Nonsolvable Signalizer Functor Theorem.
Many ideas are explored in much greater depth than is required for
that purpose and a more general theory ensues.
Consequently the results proved will be applicable in situations
where Signalizer Functor Theory is not.
Once this sequence of papers is complete,
it is the intention to prepare a monograph whose main focus will
be a proof of the Nonsolvable Signalizer Functor Theorem.

The results of this paper require the so-called $K$-group hypothesis.
Recall that a $K$-group is a finite group all of whose simple sections
are isomorphic to a cyclic group, an alternating group,
a group of Lie type or one of the $26$ sporadic simple groups.
The Classification asserts that every finite group is a $K$-group.
Thus, given the Classification, the $K$-group hypothesis is superfluous.
The main application of the Nonsolvable Signalizer Functor Theorem
is to analyze a minimal counterexample to the Classification.
In such a group, all proper subgroups are $K$-groups whence the
$K$-group hypothesis causes no difficulty.
In \S\ref{k} we will state explicitly the properties of simple
$K$-groups that we use.

Let $A$ be a group that acts as a group of automorphisms on the group $G$.
Assume that $A$ and $G$ are finite with coprime orders.
The main issue that will be addressed in this paper is:
\begin{quote} \em
    Consider the collection of $A\cz{G}{A}$-invariant subgroups of $G$.
    How do these subgroups relate to one another and to the global
    structure of $G$?
\end{quote}

\noindent In the case that $G$ is solvable,
much is known.
A typical result is the following:
\begin{UnnumberedTheorem}[see {\cite[\S36]{A}} or \cite{F1}]
    Assume that $A$ has prime order $r$,
    that $G$ is solvable and that $H$ is an $A\cz{G}{A}$-invariant
    subgroup of $G$ with $H = [H,A]$.
    \begin{enumerate}
        \item[(a)]  Let $p$ be a prime.
                    If $p=2$ and $r$ is a Fermat prime assume that the
                    Sylow $2$-subgroups of $G$ are abelian.
                    Then \[
                        \op{p}{H} \leq \op{p}{G}.
                    \]

        \item[(b)]  If $H = \oupper{2}{H}$ then \[
                        \op{2}{H} \leq \op{2}{G}.
                    \]
    \end{enumerate}
\end{UnnumberedTheorem}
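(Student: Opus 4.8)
The plan is to prove both parts at once by induction on $|G|$; they diverge only at the final step. Write $A=\listgen{a}$ with $a$ of prime order $r$, and use the standard coprime-action facts ($A$-invariant Sylow and Hall subgroups, $G=\cz{G}{A}[G,A]$, compatibility with $A$-invariant sections). Note $H=[H,A]\leq[G,A]$, and that $\op{p}{H}$ and $\op{2}{H}$, being characteristic in $H$, are normalized by $A\cz{G}{A}$. I would first reduce to proving $[\op{p}{H},\op{p'}{G}]=1$ (resp.\ $[\op{2}{H},\op{2'}{G}]=1$): quotienting by $\op{p'}{G}$ and applying induction gives $\op{p}{H}\leq\op{p',p}{G}$, and since the latter splits over $\op{p'}{G}$ (Schur--Zassenhaus) with $\op{p}{G}$ the Sylow $p$-subgroup of $\cz{\op{p',p}{G}}{\op{p'}{G}}$, the commutator condition then yields $\op{p}{H}\leq\op{p}{G}$; for (b) one also uses $\cz{G}{\op{2'}{G}}\leq\ff{G}=\op{2'}{G}$ when $\op{2}{G}=1$.

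Put $Q=\op{p}{H}$ and $K=\op{p'}{G}$. The first piece of leverage is that $Q$ centralizes $\cz{K}{A}$: indeed $\cz{K}{A}=K\cap\cz{G}{A}\leq\cz{G}{A}\leq\n{G}{Q}$, so $[\cz{K}{A},Q]\leq K\cap Q=1$; as $K=\cz{K}{A}[K,A]$ by coprime action, it remains to show $Q$ centralizes $[K,A]$. If not, pass to a $QA$-chief factor $V$ of $K$ on which $Q$ acts non-trivially and factor out the kernel: $V$ is an $A$-invariant elementary abelian $q$-group, $q\neq p$, faithful and irreducible under $QA$. Because $Q$ centralizes $\cz{K}{A}$ it centralizes $\cz{V}{A}$ as well; since $\cz{V}{Q}$ is a $QA$-submodule of the irreducible $V$, $\cz{V}{Q}=0$, so $\cz{V}{A}=0$. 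Finally $H=[H,A]$ amounts, for solvable $H$, to $\cz{H/H'}{A}=0$, which (with $H$ chosen minimal subject to failing the conclusion) delivers $Q=[Q,A]\neq1$.

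So the crux is: a $p$-group $Q=[Q,A]\neq1$ acts faithfully on an $\gf{q}$-module $V$ ($q\neq p$) irreducible under $QA$, while $A$ (cyclic of prime order $r$, normalizing $Q$) has no nonzero fixed point on $V$ --- and we must derive a contradiction from the hypothesis in force. In the relevant case $p=2$ of (a), $Q$ lies in an abelian Sylow $2$-subgroup, so $Q$ is abelian; then either $V$ is $Q$-irreducible, forcing $Q$ cyclic, $\aut{Q}$ a $2$-group, and the odd-order $A$ to centralize $Q$ --- impossible; or $V|_Q$ is the sum of the $r$ conjugates of a $Q$-homogeneous component cycled by $A$, forcing $\cz{V}{A}\neq0$. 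When $p$ is odd or $r$ is not a Fermat prime, one reduces (using $Q=[Q,A]$) to $Q$ of symplectic type, so that $A$ sits in a normalizer of symplectic type and acts fixed-point-freely on $Q/\frat{Q}$; balancing this against $\cz{V}{A}=0$ forces congruences modulo $r$ that can hold only if $r$ is Fermat and $p=2$. In part (b), $H=\oupper{2}{H}$ replaces the abelian-Sylow hypothesis, pinning down $\op{2}{H}$ via its commutators with $A$ in a way incompatible with the fixed-point-free action of a Fermat-prime cyclic group.

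The module-theoretic core is the main obstacle, and the Fermat restriction is unavoidable: taking $r=3$, $p=2$, $G=\gf{5}^{2}\rtimes Q_{8}$ with $Q_{8}\leq\sltwo{5}$ acting irreducibly on $\gf{5}^{2}$, $A$ the order-$3$ automorphism permuting $i,j,k$, and $H=Q_{8}$, one has $H=[H,A]$ invariant under $A\cz{G}{A}$ yet $\op{2}{H}=Q_{8}\not\leq1=\op{2}{G}$. So some restriction --- abelian Sylow $2$-subgroups, or $H=\oupper{2}{H}$ --- is genuinely needed, and the proof must grapple with fixed-point-free actions of cyclic groups of Fermat-prime order on $2$-groups.
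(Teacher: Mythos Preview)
The paper does not give a proof of this theorem: it appears in the Introduction as background, with pointers to \cite[\S36]{A} and \cite{F1}, and no proof is supplied anywhere in the present paper. So there is no ``paper's own proof'' to compare your proposal against.

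For what it is worth, your outline follows the standard route taken in those references: reduce by induction and coprime-action bookkeeping to the module-theoretic assertion that if $Q=[Q,A]$ is a nontrivial $p$-group acting faithfully on an irreducible $\gf{q}$-module $V$ with $q\neq p$, then $\cz{V}{A}\neq 0$ under the stated side hypotheses, and then settle that by a Hall--Higman type analysis. Two places in your sketch would need more care in a full write-up. First, the reduction ``quotient by $\op{p'}{G}$ and apply induction'' presupposes $\op{p'}{G}\neq 1$; the case $\op{p'}{G}=1$ needs separate handling. Second, the step from $H=[H,A]$ to $Q=[Q,A]$ is not automatic --- $\op{p}{H}$ is characteristic in $H$ but $H=[H,A]$ does not by itself force $\op{p}{H}=[\op{p}{H},A]$ --- so this requires an auxiliary minimality argument you only gesture at. The ``symplectic type'' paragraph is really just a pointer to where the hard work lies; that is precisely the content supplied by \cite[\S36]{A} and \cite{F1}. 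Your closing counterexample with $Q_{8}$ acting on $\gf{5}^{2}$ is correct and is the standard one showing the Fermat restriction is genuinely needed.
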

\noindent Thus, nearly always,
the Fitting subgroup of $H$ is contained in the Fitting subgroup of $G$.
This result is central to the author's proof of the Solvable Signalizer Functor Theorem.

In the theory of arbitrary finite groups,
attention is focussed on the generalized Fitting subgroup and components.
We shall introduce the notions of $A$-quasisimple group,
$A$-component and $(A, \mbox{sol})$-component.
The theory developed will revolve around these notions.
Basic properties of $A$-quasisimple groups will be established and the
main results will be stated and proved in \S\ref{l}.
This paper concludes with an application to the study of nonsolvable signalizer functors.
A precursor to this work is \cite{F3} where the author began the development
of the theory, but without a $K$-group hypothesis.

One issue that appears to be fundamental is the following:
let $R$ be a group of prime order $r$ that acts on the $r'$-group $G$
and let $V$ be a faithful completely reducible $RG$-module over a field.
Then $\cz{V}{R}$ is a module for $\cz{G}{R}$.
Let \[
    K = \kernelon{\cz{G}{R}}{\cz{V}{R}}.
\]
In \cite{F1} this situation is analyzed completely in the case that $G$ is solvable.
In a precisely defined sense,
it is shown that $K$ is almost subnormal in $G$.
We shall partially extend this result to arbitrary $G$.
In \S\ref{m} it will be shown that every component of $K$
is in fact a component of $G$.

The $K$-group hypothesis is somewhat of a departure from the previous
work of the author and deserves some comment.
Firstly, when the new proof of the Solvable Signalizer Functor Theorem
was discovered,
the challenge of extending that work to the nonsolvable case proved irresistible.
Secondly, and looking towards the future,
this work highlights issues that are fundamental to the theory and gives
direction to a more abstract study of automorphisms.
Hence continuing the work begun in \cite{F3, F4, F5} for example.

Finally, it must be emphasized that this work would not have been
possible without the prior work of McBride \cite{McB1, McB2}.
For example the material in \S\ref{aqs} on $A$-quasisimple groups is a
partial reworking of some of this results.
Moreover McBride's work provided clues to the general theory developed in \S\ref{l} and \S\ref{a}.

\section{Definitions}\label{d}
Let $G$ be a finite group.
The reader is assumed to be familiar with the notions of
the Fitting subgroup, the set of components, the layer
and the generalized Fitting subgroup of $G$
denoted by $\ff{G}, \compp{G}, \layerr{G}$ and $\gfitt{G}$ respectively.
See for example \cite{S}.
The notation $\sol{G}$ is used to denote the largest normal solvable
subgroup of $G$.
We define a number of variations on the notion of component.

\begin{Definition}\label{d:1}
    A \emph{sol-component} of $G$ is a perfect subnormal subgroup of $G$
    that maps onto a component of $G/\sol{G}$.
    The set of sol-components of $G$ is denoted by \[
        \compsol{G}
    \]
    and we define \[
        \layersol{G} = \listgen{ \compsol{G} }.
    \]
\end{Definition}
\noindent The sol-components of $G$ are characterized as being the
minimal nonsolvable subnormal subgroups of $G$.

The following lemma collects together the basic properties of sol-components.

\begin{Lemma}\label{d:2}
    Let $G$ be a finite group.
    \begin{enumerate}
        \item[(a)]  $\compp{G} \subseteq \compsol{G}$ and $\layerr{G} \normal \layersol{G}$.

        \item[(b)]  $K \in \compsol{G}$ if and only if
                    $K \subnormal G$, $K$ is perfect and $K/\sol{K}$ is simple.

        \item[(c)]  Let $K \in \compsol{G}$ and $S \subnormal G$.
                    Then
                    \begin{enumerate}
                        \item[(i)]  $K \leq S$; or
                        \item[(ii)] $[K,S] \leq K \cap S \leq \sol{K}$
                                    and $S \leq \n{G}{K}$.
                    \end{enumerate}

        \item[(d)]  $\sol{G}$ normalizes every sol-component of $G$.

        \item[(e)]  Suppose that $K$ and $L$ are distinct sol-components of $G$.
                    Then $K$ and $L$ normalize each other and
                    $[K,L] \leq \sol{K} \cap \sol{L} \subnormal \sol{G}$.

        \item[(f)]  Set $\br{G} = G/\sol{G}$.
                    The map $K \mapsto \br{K}$ defines a bijection
                    $\compsol{G} \longrightarrow \compp{\br{G}}$.
                    The inverse is given as follows:
                    if $\br{K} \in \compp{\br{G}}$,
                    let $L$ be the full inverse image of $\br{K}$ in $G$
                    and consider $L^{(\infty)}$.
    \end{enumerate}
\end{Lemma}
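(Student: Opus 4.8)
The entire argument runs by passing to $\br G = G/\sol G$ and transferring back, so the plan is to begin by recording three elementary facts. First, $\sol{\br G} = 1$; consequently every subnormal subgroup of $\br G$ has trivial solvable radical (a solvable characteristic subgroup of a subnormal subgroup of $\br G$ is itself subnormal and solvable in $\br G$, hence lies in $\sol{\br G}$), so in particular every component of $\br G$ has trivial centre and is a nonabelian simple group. Second, for any $H \subnormal G$ one has $\sol H = H \cap \sol G$: the inclusion $\supseteq$ is clear since $H \cap \sol G \normal H$ is solvable, and $\subseteq$ follows by induction on the subnormal defect, as $\sol H$ is characteristic in $H$. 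Third: if $A$ is perfect and $B \normal AB$ is solvable then $(AB)^{(\infty)} = A^{(\infty)} = A$; applied with $A$ a perfect subnormal subgroup of $G$ and $B = \sol G$ this gives $A = (A\sol G)^{(\infty)}$, so such an $A$ is recovered from its image $\br A$ as the last derived term of the full preimage of $\br A$ in $G$.

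Granting these, (a) is quick: a component $C$ of $G$ is quasisimple, so $\sol C = \zz C$ and hence $C \cap \sol G \le \zz C$; thus $\br C \isomorphic C/(C \cap \sol G)$ is a central quotient of $C$, still quasisimple, and subnormal in $\br G$, i.e.\ a component, and as $C$ maps onto it, $C \in \compsol G$. So $\compp G \subseteq \compsol G$ and $\layerr G = \listgen{\compp G} \le \layersol G$, the containment being normal since $\layerr G \normal G$. For (b): if $K \in \compsol G$ then $K/\sol K = K/(K \cap \sol G) \isomorphic \br K$ is a component of $\br G$, hence simple; conversely a perfect $K \subnormal G$ with $K/\sol K$ simple has $\br K \isomorphic K/\sol K$ a nonabelian simple subnormal subgroup of $\br G$, so $\br K \in \compp{\br G}$ and $K \in \compsol G$. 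For (f): the map is well defined by definition; it is onto, since for $\br K \in \compp{\br G}$ its full preimage $M$ is subnormal in $G$ and $M^{(\infty)}$ is a perfect subnormal subgroup with image $\br K$ (as $\br K$ is perfect), so $M^{(\infty)} \in \compsol G$ maps onto $\br K$; it is injective, since if $K, K' \in \compsol G$ have $\br K = \br{K'}$ then $K\sol G = K'\sol G$, whence $K = (K\sol G)^{(\infty)} = (K'\sol G)^{(\infty)} = K'$ — which is exactly the stated inverse. Then (d) follows at once: for $g \in \sol G$, $K^g$ is again a perfect subnormal subgroup with $\br{K^g} = \br K^{\,\br g} = \br K$, so $K^g = K$ by (f).

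For (c), assume $K \not\le S$; then $K \cap S$ is a proper subnormal subgroup of $K$. If $K \cap S$ were nonsolvable it would, being subnormal in $G$, contain a sol-component $M$ of $G$ (every nonsolvable subnormal subgroup does, by (b)), and $\br M \le \br K$ would force $\br M = \br K$ (a component inside the simple group $\br K$), hence $M = K$ by (f), contradicting $K \not\le S$. So $K \cap S$ is solvable, and being subnormal in $K$ it lies in $\sol K$. In $\br G$, the component $\br K$ and the subnormal subgroup $\br S$ satisfy the standard dichotomy $\br K \le \br S$ or $[\br K, \br S] = 1$. If $\br K \le \br S$, then $\br K$ is a component of $\br S$, so $\br K \le \layerr{\br S} \le \br S^{(\infty)}$; taking full preimages gives $K\sol G \le S^{(\infty)}\sol G$, and since $K$ is perfect, $K \le (S^{(\infty)}\sol G)^{(\infty)} = S^{(\infty)} \le S$, a contradiction. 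Hence $[\br K, \br S] = 1$, so $[K,S] \le \sol G$; moreover $\br S$ normalizes $\br K$, so $S$ normalizes the preimage $K\sol G$ and therefore its characteristic subgroup $(K\sol G)^{(\infty)} = K$. Thus $S \le \n G K$ and $[K,S] \le K \cap \sol G = \sol K$, which with $K \cap S \le \sol K$ gives case (ii).

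Finally (e) follows from (c) with $S = L$: $K \le L$ would give $\br K = \br L$ and so $K = L$ as above, so the second alternative holds, yielding $[K,L] \le \sol K$, $K \cap L \le \sol K$, $L \le \n G K$; interchanging $K$ and $L$ gives the symmetric statements, so $K$ and $L$ normalize each other, $[K,L] \le \sol K \cap \sol L$, and this intersection is subnormal in $\sol G$ since $\sol K = K \cap \sol G$ and $\sol L = L \cap \sol G$ are. The step I expect to be the real obstacle is the case $\br K \le \br S$ inside (c): one must pass from a containment in $\br G$ back to the genuine containment $K \le S$ in $G$, and this is precisely where perfectness of $K$ and the identity $(AB)^{(\infty)} = A^{(\infty)}$ for $B$ normal solvable do the essential work; the remainder is organised bookkeeping around the two standing facts and the elementary theory of components in a group with trivial solvable radical.
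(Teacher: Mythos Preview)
The paper leaves this lemma as an exercise, pointing only to Lemma~\ref{p:2}, so there is no proof to compare against; your argument is the intended kind of verification and its overall architecture---pass to $\br G = G/\sol G$, use that components there are simple, and pull containments back via $(\cdot)^{(\infty)}$---is correct and efficient.

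There is, however, one genuine slip. Your ``third fact'' is false as stated. Take $A \isomorphic \alt{5}$ acting faithfully and irreducibly on an elementary abelian group $B$ of order $2^{4}$ and form the semidirect product $AB$. Then $A$ is perfect and $B \normal AB$ is solvable, but $[A,B] = B$ forces $(AB)' = AB$, so $(AB)^{(\infty)} = AB \neq A$. What is true, and what you actually use, is the version with the extra hypothesis that $A$ is \emph{subnormal} in $AB$ (equivalently, in the ambient group $G$): then Lemma~\ref{p:2}(b), applied with $K = A$ and $S = B$, gives $B \leq \n{AB}{A}$, so $A \normal AB$, the quotient $AB/A$ is solvable, and hence $(AB)^{(\infty)} \leq A$; combined with $A = A^{(\infty)} \leq (AB)^{(\infty)}$ this yields the desired equality. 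Every place you invoke the fact---for $K$, for $K'$, and for $S^{(\infty)}$ in the treatment of~(c)---the subgroup playing the role of $A$ is perfect and subnormal in $G$, so the repaired statement applies and the remainder of your proof stands unchanged.

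One cosmetic point: the justification ``by~(b)'' for the claim that a nonsolvable subnormal subgroup contains a sol-component is really an appeal to the sentence immediately preceding the lemma, that sol-components coincide with minimal nonsolvable subnormal subgroups; (b) then confirms that such a minimal subgroup has the stated internal description.
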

\noindent The proof is left as an exercise for the reader.
See for example Lemma~\ref{p:2}.

\begin{Definition}\label{d:3}\mbox{}
    \begin{itemize}
        \item   $G$ is \emph{constrained} if $\layerr{G} = 1$.
        \item   $G$ is \emph{semisimple} if $G = \layerr{G}$.
    \end{itemize}
\end{Definition}
\noindent Recall that $\gfitt{G} = \ff{G}\layerr{G}$ and that $\cz{G}{\gfitt{G}} = \zz{\ff{G}}$.
Thus $G$ is constrained if and only if $\gfitt{G} = \ff{G}$
if and only if $\cz{G}{\ff{G}} \leq \ff{G}$.
It is straightforward to show that any sol-component of $G$ is either
constrained or semisimple.

Next we bring into play a group $A$ that acts as a group of automorphisms on $G$.
It is convenient to use the language of groups with operators.
Thus $G$ is $A$-simple if $G$ is nonabelian and the only $A$-invariant normal
subgroups of $G$ are $1$ and $G$.
This implies that $G$ is a direct product of simple groups that are
permuted transitively by $A$.

Recall that $G$ is quasisimple if $G$ is perfect and $G/\zz{G}$ is simple.

\begin{Definition}\label{d:4}
    $G$ is \emph{$A$-quasisimple} if $G$ is perfect and $G/\zz{G}$ is $A$-simple.
\end{Definition}

\noindent It is straightforward to show that $G$ is $A$-quasisimple if and only if
$G$ is the central product of quasisimple groups that are permuted transitively by $A$.
Equivalently, $G = \layerr{G}$ and $A$ is transitive on $\compp{G}$.

Trivially, $A$ acts on the sets $\compp{G}$ and $\compsol{G}$.

\begin{Definition}\label{d:5}\mbox{}
    \begin{itemize}
        \item   An \emph{$A$-component} of $G$ is the subgroup generated by
                an orbit of $A$ on $\compp{G}$.

        \item   An \emph{$(A, \mbox{sol})$-component} of $G$ is the subgroup
                generated by an orbit of $A$ of $\compsol{G}$.
    \end{itemize}
    The sets of $A$-components and $(A, \mbox{sol})$-components of $G$
    are denoted by \[
        \comp{A}{G} \mbox{ and } \comp{A, \mbox{sol}}{G}
    \]
    respectively.
\end{Definition}

\noindent The $A$-components of $G$ are the $A$-quasisimple subnormal subgroups of $G$.
The $(A, \mbox{sol})$-components of $G$ are the minimal $A$-invariant
nonsolvable subnormal subgroups of $G$.
A result entirely analogous to Lemma~\ref{d:2} holds but for
$(A, \mbox{sol})$-components instead of sol-components.

\section{Preliminaries}\label{p}
\begin{Definition}\label{p:1}
    Suppose the group $G$ acts on the set $\Omega$.
    \begin{enumerate}
        \item[(a)]  The action is \emph{semiregular} if whenever
                    $\alpha \in \Omega, g \in G$ and $\alpha g = \alpha$
                    then $g = 1$.

        \item[(b)]  The action is \emph{regular} if it is semiregular and transitive.
    \end{enumerate}
\end{Definition}

\begin{Lemma}\label{p:2}
    Let $G$ be a group.
    \begin{enumerate}
        \item[(a)]  Let $K \in \compp{G}$ and $S \subnormal G$.
                    Then either $K \leq S$ or $[K,S] = 1$.

        \item[(b)]  Suppose $K$ is a perfect subnormal subgroup of $G$ and
                    that $S$ is a solvable subgroup of $G$ that is normalized by $K$.
                    Then $S \leq \n{G}{K}$.
                    If in addition $\sol{K} = \zz{K}$ then $[S,K] = 1$.
    \end{enumerate}
\end{Lemma}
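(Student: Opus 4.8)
The plan is to prove (a) and then (b) by induction, using only the three subgroups lemma, the remark that a \emph{proper} subnormal subgroup of a group has a \emph{proper} normal closure, and the description of the normal subgroups of a quasisimple group. The real work is in (b).

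For (a) I would reduce, by induction on the subnormal defect of $S$, to the case $S \normal G$: if $S \normal T \subnormal G$ with $T$ of smaller defect, then the inductive hypothesis applied to the pair $(K,T)$ gives either $[K,T]=1$, whence $[K,S]=1$ since $S \le T$, or $K \le T$, in which case $K \subnormal T$ and $K$ is quasisimple, so $K \in \compp{T}$, and the case $S \normal T$ applied inside $T$ finishes. So let $S \normal G$. Then $K \cap S \normal K$, and since $K$ is quasisimple either $K \le S$, as wanted, or $K \cap S \le \zz{K}$; assume the latter. The normal closure $L$ of $K$ in $G$ is a product of components of $G$, hence semisimple, so the solvable normal subgroup $L \cap S$ of $L$ satisfies $L \cap S \le \zz{L}$; thus $[K,S] \le [L,S] \le \zz{L}$, so $K$ centralises $[K,S]$, that is $[[K,S],K]=1$. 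As also $[[S,K],K]=1$, the three subgroups lemma applied to $K,K,S$, together with $[K,K]=K$, gives $[K,S]=[[K,K],S]=1$.

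For (b) I would induct on $|G|$. If $KS$ is proper in $G$ then $K$ is perfect and subnormal in $KS$ and $S$ is a solvable subgroup of $KS$ normalised by $K$, so by induction $S \le \n{KS}{K} \le \n{G}{K}$; hence I may assume $G=KS$, so that $S \normal G$. If $K=G$ the claim is trivial, so assume $K \ne G$ and put $M=K^{G}$, the normal closure of $K$ in $G$. Since $K$ is subnormal and proper, $M \ne G$, and $K \subnormal M$; since $K \le M \normal G$ and $G=KS$, the Dedekind law gives $M=K(M \cap S)$, where $M \cap S$ is solvable and normalised by $K$. By induction inside $M$ we get $K \normal M$. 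But then $M/K \isomorphic (M \cap S)/(K \cap S)$ is solvable while $K$ is perfect, so $K=M^{(\infty)}$; as $M^{(\infty)}$ is characteristic in $M \normal G$ we conclude $K \normal G$, that is $S \le \n{G}{K}$. For the final assertion, $S \le \n{G}{K}$ and $K \le \n{G}{S}$ give $[K,S] \le K \cap S$, and $K \cap S \normal K$ is solvable, so $K \cap S \le \sol{K}=\zz{K}$; hence $[K,S] \le \zz{K}$, so $[[K,S],K]=1=[[S,K],K]$, and the three subgroups lemma with $[K,K]=K$ gives $[K,S]=1$.

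The hard part will be the inductive step of (b): one must recognise that passing to the normal closure $M=K^{G}$ is the right reduction, and that once $K$ is known to be normal in $M$, perfectness forces $K=M^{(\infty)}$ — this is exactly where solvability of $S$ is used, and explains why the argument for (b) does not itself yield (a). The only other point needing care is the bookkeeping in the three subgroups lemma (in particular using $[K,K]=K$ to collapse $[[K,K],S]$ to $[K,S]$) and the routine verification that $L \cap S$ is a solvable normal subgroup of the semisimple group $L$ and so lies in $\zz{L}$.
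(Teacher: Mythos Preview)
Your proof of (b) is correct and is essentially line-for-line the paper's own argument: reduce to $G=KS$, pass to the normal closure $M=\langle K^{G}\rangle\ne G$, apply induction inside $M$ to get $K\normal M$, then use that $M/K$ is solvable and $K$ perfect to conclude $K=M^{(\infty)}\;\characteristic\;M\normal G$; the final Three~Subgroups step is identical too.

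For (a) the paper simply cites Kurzweil--Stellmacher, so you are supplying your own argument, and there is one genuine slip. After reducing to $S\normal G$ and the case $K\cap S\le\zz{K}$, you pass to $L=\langle K^{G}\rangle$ and assert that ``the solvable normal subgroup $L\cap S$ of $L$ satisfies $L\cap S\le\zz{L}$''. But in part~(a) the hypothesis on $S$ is only that $S\subnormal G$; solvability of $S$ is the hypothesis of~(b), not of~(a), so there is no reason at this point for $L\cap S$ to be solvable. The conclusion $L\cap S\le\zz{L}$ is nonetheless true, and the fix is short: since $S\normal G$, every component $K^{g}$ of $L$ satisfies $K^{g}\cap S=(K\cap S)^{g}\le\zz{K^{g}}$, and since components of a semisimple group are normal in it, $[K^{g},L\cap S]\le K^{g}\cap S\le\zz{K^{g}}$; the Three~Subgroups Lemma then gives $[K^{g},L\cap S]=1$ for each $g$, so $L\cap S\le\zz{L}$. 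With this correction your argument for~(a) goes through, and the rest of the Three~Subgroups computation you wrote is fine.
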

\begin{proof}
    (a). This is \cite[6.5.2, p.142]{S}.

    (b). Without loss, $G = KS$.
    If $G = K$ the result is clear so assume $G \not= K$.
    Set $L = \listgen{K^{G}}$,
    so $L \not= G$ as $K \subnormal G$.
    Now $L = K(L \cap S)$ so by induction, $K \normal L$.
    Since $L \cap S$ is solvable and $K$ is perfect it follows that
    $K = L^{(\infty)} \characteristic L \normal G$,
    so $K \normal G$.

    Suppose also that $\sol{K} = \zz{K}$.
    Then $[K,S] \leq K \cap S \leq \sol{K} = \zz{K}$ whence $[K,S,K] = 1$.
    It follows from the Three Subgroups Lemma that $[S,K] = 1$.
\end{proof}

\begin{Definition}\label{p:3}
    The group $A$ \emph{acts coprimely} on the group $G$ if
    $A$ acts on $G$; the orders of $A$ and $G$ are coprime;
    and $A$ or $G$ is solvable.
\end{Definition}

\begin{Theorem}[Coprime Action]\label{p:4}
    Suppose the group $A$ acts coprimely on the group $G$.
    \begin{enumerate}
        \item[(a)]  $G = \cz{G}{A}[G,A]$ and $[G,A] = [G,A,A]$.

        \item[(b)]  If $G$ is abelian then $G = \cz{G}{A} \times [G,A]$.

        \item[(c)]  Suppose $N$ is an $A$-invariant normal subgroup of $G$.
                    Set $\br{G} = G/N$.
                    Then $\cz{\br{G}}{A} = \br{\cz{G}{A}}$.

        \item[(d)]  For each prime $p$ there exists an $A$-invariant
                    Sylow $p$-subgroup of $G$.
                    Every $A$-invariant $p$-subgroup is contained in an $A$-invariant
                    Sylow $p$-subgroup of $G$.
                    Moreover, $\cz{G}{A}$ acts transitively by conjugation on the
                    collection of $A$-invariant Sylow $p$-subgroups of $G$.

        \item[(e)]  Suppose $G = XY$ where $X$ and $Y$ are $A$-invariant subgroup of $G$.
                    Then $\cz{G}{A} = \cz{X}{A}\cz{Y}{A}$.

        \item[(f)]  If $[\gfitt{G},A] = 1$ then $[G,A] = 1$.

        \item[(g)]  Suppose that $N$ is an $A$-invariant normal Hall-subgroup of $G$
                    and that $N$ or $G/N$ is solvable.
                    Then $G$ possesses an $A$-invariant complement to $N$.
                    All such complements are conjugate under the action of $\cz{G}{A}$.
    \end{enumerate}
\end{Theorem}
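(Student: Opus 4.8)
The plan is to deduce everything from two ingredients: the Schur--Zassenhaus Theorem (both the existence of complements to a normal Hall subgroup and, when the subgroup or its complement is solvable, the conjugacy of such complements) and a single auxiliary lemma, \emph{Glauberman's Lemma}. The solvability clause in Definition~\ref{p:3} is precisely what makes the conjugacy half of Schur--Zassenhaus available in every situation that arises, so Feit--Thompson is never invoked. Throughout, set $\Gamma = G \rtimes A$ and $\pi = \primes{A}$; then $G$ is the normal Hall $\pi'$-subgroup of $\Gamma$ and $A$ a Hall $\pi$-subgroup, and every complement to $G$ in $\Gamma$ is a $G$-conjugate of $A$ (such a complement has order $\card{A}$, meets $G$ trivially, hence is $\Gamma$-conjugate to $A$ by Schur--Zassenhaus, and $\Gamma = GA$ forces $\Gamma$-conjugacy and $G$-conjugacy to coincide here). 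A second observation, used repeatedly, is $\n{G}{A} = \cz{G}{A}$: if $x \in G$ normalizes $A$ then $[x,a] \in A \cap G = 1$ for every $a \in A$, and hence also $\n{\Gamma}{A} = \cz{G}{A}A$.

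Glauberman's Lemma is the statement: \emph{if $\Gamma$ acts on a nonempty set $\Omega$ with $G$ transitive, then $A$ fixes a point of $\Omega$, and $\cz{G}{A}$ is transitive on $\fix{\Omega}{A}$.} To prove it, fix $\omega \in \Omega$; transitivity gives $\Gamma = G\,\Gamma_\omega$, so $G \cap \Gamma_\omega \normal \Gamma_\omega$ is a normal Hall $\pi'$-subgroup with quotient $\cong A$, and Schur--Zassenhaus yields a complement inside $\Gamma_\omega$, which is then a complement to $G$ in $\Gamma$, hence a $G$-conjugate $A^{g}$; thus $A$ fixes $\omega g^{-1}$. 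For the transitivity statement, given $\omega_1,\omega_2 \in \fix{\Omega}{A}$ with $\omega_2 = \omega_1 g$, both $A$ and $A^{g}$ lie in $\Gamma_{\omega_2}$, Schur--Zassenhaus conjugacy (here consuming the hypothesis that $G \cap \Gamma_{\omega_2}$ or $A$ is solvable) gives $A^{g} = A^{c}$ with $c \in G \cap \Gamma_{\omega_2}$, and $gc^{-1} \in \n{G}{A} = \cz{G}{A}$ carries $\omega_1$ to $\omega_2$. Establishing this lemma, and thus securing the conjugacy half of Schur--Zassenhaus in the relevant setting, is the heart of the matter; everything else is assembly.

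The parts now follow by choosing $\Omega$ appropriately. For (c), let $N \normal G$ be $A$-invariant with $\br{x}N \in \cz{G/N}{A}$; the coset $xN$ is $A$-invariant, $N$ acts regularly on it by right translation, and these actions combine into an action of $NA$ on $xN$, so Glauberman's Lemma produces an $A$-fixed point $xn_0 \in xN$, whence $xn_0 \in \cz{G}{A}$ and $\br{xn_0} = \br{x}$; thus $\cz{G/N}{A} = \br{\cz{G}{A}}$. For (a): applying (c) to $G/[G,A]$, on which $A$ acts trivially, gives $G = \cz{G}{A}[G,A]$; applying this together with (c) to $G/[G,A,A]$ shows $A$ centralizes that quotient, so $[G,A] \leq [G,A,A]$, the reverse inclusion being trivial. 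Part (b) is then the standard averaging argument (or Maschke) in the abelian group $G$, giving $\cz{G}{A} \cap [G,A] = 1$. For (e), take $\Omega = \set{(x,y) \in X \times Y}{xy = c}$ with $c \in \cz{G}{A}$, on which $X \cap Y$ acts regularly and $A$ acts compatibly; an $A$-fixed point gives $c \in \cz{X}{A}\cz{Y}{A}$. For (d), take $\Omega = \syl{p}{G}$ with $G$ acting by conjugation: Glauberman's Lemma gives an $A$-invariant Sylow $p$-subgroup and the $\cz{G}{A}$-conjugacy of all of them, while the clause that every $A$-invariant $p$-subgroup $P$ lies in an $A$-invariant Sylow $p$-subgroup is a routine induction on $\card{G}$ via $\n{G}{P}$, the case $P \normal G$ being immediate.

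Finally, (g) and (f) need a Frattini argument in addition. For (g), Schur--Zassenhaus gives a complement $H_0$ to $N$ in $G$ and the $G$-conjugacy of all complements; since $\Gamma$ permutes this single $G$-orbit, a Frattini argument gives $\Gamma = \n{\Gamma}{H_0}\,G$, so $\n{G}{H_0} \normal \n{\Gamma}{H_0}$ is a normal Hall $\pi'$-subgroup with quotient $\cong A$, a Schur--Zassenhaus complement in $\n{\Gamma}{H_0}$ is a conjugate $A^{h}$, and $H_0^{h^{-1}}$ is an $A$-invariant complement; the $\cz{G}{A}$-conjugacy of $A$-invariant complements is proved exactly as in Glauberman's Lemma. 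For (f), assume $[\gfitt{G},A] = 1$ and set $D = \cz{\Gamma}{\gfitt{G}} \normal \Gamma$; since $\cz{G}{\gfitt{G}} = \zz{\ff{G}}$ is an abelian $\pi'$-group and $A \leq D$, one sees $D = \zz{\ff{G}}\,A$ with $A$ a complement to the normal Hall $\pi'$-subgroup $\zz{\ff{G}}$ of $D$; a Frattini argument over $D \normal \Gamma$, together with $\n{\Gamma}{A} = \cz{G}{A}A$, gives $\Gamma = \zz{\ff{G}}\,\cz{G}{A}\,A$, and intersecting with $G$ yields $G = \zz{\ff{G}}\cz{G}{A} = \cz{G}{A}$, i.e.\ $[G,A] = 1$. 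The main obstacle throughout is the proof of Glauberman's Lemma with its full conjugacy conclusion; once that is in hand the theorem becomes a catalogue of applications.
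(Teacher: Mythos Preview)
Your proof is correct. For parts (a)--(e) the paper simply cites Kurzweil--Stellmacher, and your arguments via Glauberman's Lemma are the standard ones found there; for (g) your Schur--Zassenhaus plus Frattini argument in $\Gamma=G\rtimes A$ is exactly what the paper's one-line sketch indicates. The one genuine divergence is part (f). The paper's proof is two lines: from $[\gfitt{G},A]=1$ the Three Subgroups Lemma (using $[A,\gfitt{G},G]=1$ and $[\gfitt{G},G,A]\leq[\gfitt{G},A]=1$) gives $[G,A]\leq\cz{G}{\gfitt{G}}\leq\gfitt{G}$, hence $[G,A,A]\leq[\gfitt{G},A]=1$, and then part (a) yields $[G,A]=[G,A,A]=1$. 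Your route through $D=\cz{\Gamma}{\gfitt{G}}$ and a Frattini argument is valid but heavier; the paper's argument shows that once (a) is in hand, (f) is an immediate corollary, whereas your approach re-deploys the Frattini machinery to obtain (f) independently of (a).
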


\newcommand{\CoprimeActionComm}{Coprime Action(a)}
\newcommand{\CoprimeActionQuot}{Coprime Action(c)}
\newcommand{\CoprimeActionSyl}{Coprime Action(d)}
\newcommand{\CoprimeActionProd}{Coprime Action(e)}
\newcommand{\CoprimeActionGFitt}{Coprime Action(f)}

\begin{proof}
    For (a),\ldots,(e) see \cite[p.184--188]{S}.

    (f). We have $[G,A] \leq \cz{G}{\gfitt{G}} \leq \gfitt{G}$
    so $[G,A,A] = 1$.
    Apply (a).

    (g).\ This follows by applying the Schur-Zassenhaus Theorem and a Frattini argument
    to the semidirect product $AG$.
\end{proof}

\begin{Lemma}\label{p:5}
    Suppose the group $A$ acts on the perfect group $K$ and that
    $A$ acts trivially on $K/\zz{K}$.
    Then $A$ acts trivially on $K$.
\end{Lemma}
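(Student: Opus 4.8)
The plan is to apply the Three Subgroups Lemma inside the semidirect product $G = AK$. The hypothesis that $A$ acts trivially on $K/\zz{K}$ says precisely that $k^{-1}k^{a} \in \zz{K}$ for all $k \in K$ and $a \in A$; that is, $[K,A] \leq \zz{K}$.

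Since $\zz{K}$ centralizes $K$ in $G$, this gives $[K,A,K] = 1$, and as $[K,A] = [A,K]$ also $[A,K,K] = 1$. The Three Subgroups Lemma, applied in $G$, then yields $[K,K,A] = 1$, i.e. $[[K,K],A] = 1$. Because $K$ is perfect, $[K,K] = K$, so $[K,A] = 1$, which is exactly the assertion that $A$ acts trivially on $K$. The whole content is thus the elementary fact that $[K,A]$ central together with $K$ perfect forces $[K,A] = 1$.

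An equivalent route avoids the ambient group entirely: any automorphism $\varphi$ of $K$ inducing the identity on $K/\zz{K}$ has the form $\varphi(k) = k\,z(k)$ with $z(k) \in \zz{K}$, and a short check — using that the values of $z$ are central — shows that $z \colon K \to \zz{K}$ is a homomorphism, hence factors through $K/[K,K] = 1$. So the only such $\varphi$ is the identity, and $A$ consequently acts trivially on $K$.

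I do not expect a genuine obstacle. The only point needing a little care is setting up $G = AK$ so that the commutators $[K,A]$, $[A,K,K]$ and the centralizing property of $\zz{K}$ are all interpreted consistently, after which the Three Subgroups Lemma finishes the argument. It is worth noting that neither coprimeness of the action nor the $K$-group hypothesis is used here.
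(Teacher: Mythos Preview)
Your proof is correct and follows essentially the same approach as the paper: both observe that $[K,A] \leq \zz{K}$ gives $[K,A,K] = [A,K,K] = 1$, apply the Three Subgroups Lemma to obtain $[K,K,A] = 1$, and conclude using $K = [K,K]$. The paper's version is simply terser, and your alternative homomorphism argument is a nice bonus but not needed.
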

\begin{proof}
    We have $[K,A,K] \leq [\zz{K},K] = 1$ and similarly $[A,K,K] = 1$.
    The Three Subgroups Lemma forces $[K,K,A] = 1$.
    Since $K$ is perfect, the result follows.
\end{proof}

\begin{Lemma}\label{p:6}
    Let $A$ be a group that acts on the group $G$.
    Suppose that $G = K_{1} \times \cdots \times K_{n}$
    where $\listset{K_{1},\ldots,K_{n}}$ is a collection of subgroups
    that is permuted transitively by $A$.
    For each $i$ let $\pi_{i}:G \longrightarrow K_{i}$ be the projection map
    and set $B = \n{A}{K_{1}}$.
    Then \[
        \cz{G}{A} \isomorphic \cz{G}{A}\pi_{1} = \cz{K_{1}}{B}.
    \]
\end{Lemma}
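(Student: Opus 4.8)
The plan is to show that the projection $\pi_{1}$, which is already a group homomorphism, restricts to an \emph{isomorphism} from $\cz{G}{A}$ onto $\cz{K_{1}}{B}$. So two things have to be checked: that $\pi_{1}$ is injective on $\cz{G}{A}$, and that its image on $\cz{G}{A}$ is exactly $\cz{K_{1}}{B}$. The bijection $\compsol{G}\to\compp{\br G}$ flavour of the argument is not needed here; everything is done by hand with a transversal.

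First I would fix the set-up. Since $A$ permutes $\listset{K_{1},\ldots,K_{n}}$ transitively, choose $a_{1}=1,a_{2},\ldots,a_{n}\in A$ with $K_{1}^{a_{i}}=K_{i}$; these form a transversal of $B=\n{A}{K_{1}}$ in $A$. For $g\in\cz{G}{A}$ write $g=(g\pi_{1})\cdots(g\pi_{n})$ with $g\pi_{j}\in K_{j}$. Applying $a_{i}$ permutes the direct factors and carries $K_{1}$ to $K_{i}$, so comparing $K_{i}$-components in $g=g^{a_{i}}$ gives $g\pi_{i}=(g\pi_{1})^{a_{i}}$ for every $i$. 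In particular $g\pi_{1}=1$ forces $g\pi_{i}=1$ for all $i$, hence $g=1$; thus $\pi_{1}$ is injective on $\cz{G}{A}$. The same component-comparison, applied this time to an element $b\in B$ (which fixes the factor $K_{1}$), shows that for $g\in\cz{G}{A}$ the $K_{1}$-component of $g=g^{b}$ is $(g\pi_{1})^{b}$, so $g\pi_{1}\in\cz{K_{1}}{B}$; hence $\cz{G}{A}\pi_{1}\leq\cz{K_{1}}{B}$.

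For the reverse inclusion, given $x\in\cz{K_{1}}{B}$ I would set $g=\prod_{i=1}^{n}x^{a_{i}}$, noting $x^{a_{i}}\in K_{1}^{a_{i}}=K_{i}$, so this is a well-defined element of the direct product with $g\pi_{1}=x^{a_{1}}=x$. It remains to check $g\in\cz{G}{A}$, and this is the one point needing care. Given $a\in A$, let $\sigma$ be the permutation of $\listset{1,\ldots,n}$ with $K_{i}^{a}=K_{\sigma(i)}$; then $K_{1}^{a_{i}a}=K_{i}^{a}=K_{\sigma(i)}=K_{1}^{a_{\sigma(i)}}$, so $b_{i}:=a_{i}a\,a_{\sigma(i)}^{-1}\in\n{A}{K_{1}}=B$. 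Using $x^{b_{i}}=x$ we get $x^{a_{i}a}=(x^{b_{i}})^{a_{\sigma(i)}}=x^{a_{\sigma(i)}}$, whence $g^{a}=\prod_{i}x^{a_{i}a}=\prod_{i}x^{a_{\sigma(i)}}=g$. Therefore $x=g\pi_{1}\in\cz{G}{A}\pi_{1}$, and combining the two inclusions yields $\cz{G}{A}\pi_{1}=\cz{K_{1}}{B}$, so $\cz{G}{A}\isomorphic\cz{G}{A}\pi_{1}=\cz{K_{1}}{B}$ as required. The only real obstacle is the transversal bookkeeping of the last step — the cocycle-type identity $a_{i}a=b_{i}a_{\sigma(i)}$ with $b_{i}\in B$ — and once that is in place everything else is routine.
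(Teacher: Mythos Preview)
Your proof is correct and follows essentially the same approach as the paper: both establish that $\pi_{1}$ is a monomorphism on $\cz{G}{A}$ with image $\cz{K_{1}}{B}$, and both construct the preimage of $x\in\cz{K_{1}}{B}$ as $\prod_{i}x^{a_{i}}$ using a transversal $\{a_{i}\}$ of $B$ in $A$. Your version is somewhat more explicit in the verification that this product is $A$-fixed (the paper simply says ``a simple argument shows that $A$ permutes $c_{1},\ldots,c_{n}$''), and your injectivity argument via $g\pi_{i}=(g\pi_{1})^{a_{i}}$ is a minor variant of the paper's support argument, but these are cosmetic differences.
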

\begin{proof}
    Let $c \in \cz{G}{A}$.
    Then there exist unique $c_{i} \in K_{i}$ such that $c = c_{1}\cdots c_{n}$,
    in fact $c_{i} = c\pi_{i}$.
    Now $A$ permutes the set $\set{K_{i}}{c_{i}\not=1}$ so as $A$ is transitive
    on $\listset{K_{1},\ldots,K_{n}}$,
    this set is either empty of equal to $\listset{K_{1},\ldots,K_{n}}$.
    It follows that the map $c \mapsto c_{1}$ is a monomorphism.
    This proves the isomorphism.
    Also, $A$ permutes $\listset{c_{1}, \ldots, c_{n}}$
    whence $c_{1} \in \cz{K_{1}}{B}$.
    Thus $\cz{G}{A}\pi_{1} \leq \cz{K_{1}}{B}$.

    Suppose now that we are given $c_{1} \in \cz{K_{1}}{B}$.
    For each $i$ choose $a_{i} \in A$ with $K_{i} = K_{1}^{a_{1}}$,
    so $\listset{a_{1}, \ldots, a_{n}}$ is a right transversal to $B$ in $A$.
    Define $c_{i} = c_{1}^{a_{i}} \in K_{i}$ and set $c = c_{1} \cdots c_{n}$.
    A simple argument shows that $A$ permutes $c_{1}, \ldots, c_{n}$,
    so as $[K_{i},K_{j}] = 1$ for all $i \not= j$ we have $c \in \cz{G}{A}$.
    Then $c\pi_{1} = c_{1}$ so $\cz{K_{1}}{B} \leq \cz{G}{A}\pi_{1}$.
    The proof is complete.
\end{proof}

We use the symbol $*$ to denote a central product.
Thus $G = H*K$ means $G = HK$ and $[H,K] = 1$.

\begin{Lemma}\label{p:7}
    Let $A$ be a group that acts coprimely on the group $K$.
    Suppose $K = K_{1} * \cdots * K_{n}$ for some $A$-invariant collection
    $\listset{K_{1}, \ldots, K_{n}}$ of subgroups of $K$ on which $A$
    acts regularly.
    Then $\cz{K}{A} \isomorphic K_{1}/Z$ for some subgroup
    $Z \leq \zz{K_{1}} \cap \zz{K_{2} * \cdots * K_{n}}$.
\end{Lemma}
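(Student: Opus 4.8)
The plan is to reduce the statement about the central product $K = K_1 * \cdots * K_n$ to the direct product situation already handled in Lemma~\ref{p:6}, and then to recognize the resulting centralizer as a quotient of $K_1$. First I would consider the natural surjection $\varphi : K_1 \times \cdots \times K_n \longrightarrow K$ induced by multiplication; since the $K_i$ pairwise commute this is a homomorphism, it is $A$-equivariant because $A$ permutes the $K_i$ in the same way on both sides, and its kernel $D$ is a central subgroup of the direct product contained in $\zz{K_1 \times \cdots \times K_n}$. Because $A$ acts coprimely on $K$, and hence (via $\varphi$) one checks it acts coprimely on the direct product as well, Coprime Action(c) gives $\cz{K}{A} = \cz{K_1 \times \cdots \times K_n}{A}\varphi$, i.e.\ the centralizer upstairs maps onto the centralizer downstairs.

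Next I would apply Lemma~\ref{p:6} to the direct product $K_1 \times \cdots \times K_n$ with $B = \n{A}{K_1}$; since $A$ acts regularly on $\listset{K_1, \ldots, K_n}$ we have $B = 1$, so that lemma yields $\cz{K_1 \times \cdots \times K_n}{A} \isomorphic \cz{K_1}{B} = \cz{K_1}{1} = K_1$, with the isomorphism realized by the first projection $\pi_1$. Composing, $\cz{K}{A}$ is the image of $K_1$ (embedded diagonally via $c_1 \mapsto c_1 c_1^{a_2} \cdots c_1^{a_n}$ as in the proof of Lemma~\ref{p:6}) under $\varphi$. So $\cz{K}{A} \isomorphic K_1 / Z$ where $Z$ is the subgroup of $K_1$ consisting of those $c_1$ with $c_1 c_1^{a_2} \cdots c_1^{a_n} \in D = \kernel{\varphi}$.

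It then remains to identify $Z$ and check the two containments $Z \leq \zz{K_1}$ and $Z \leq \zz{K_2 * \cdots * K_n}$. For the first: the image in $K_1 \times \cdots \times K_n$ of such a $c_1$ lies in $D \leq \zz{K_1 \times \cdots \times K_n}$, so in particular its first coordinate $c_1$ is central in $K_1$, giving $Z \leq \zz{K_1}$. For the second: if $c_1 \in Z$ then $x := c_1 c_1^{a_2} \cdots c_1^{a_n} = 1$ in $K$, so $c_1 = (c_1^{a_2} \cdots c_1^{a_n})^{-1} \in K_2 \cdots K_n = K_2 * \cdots * K_n$; combined with $[K_1, K_j] = 1$ for $j \ge 2$ and the fact that $c_1 \in Z \leq \zz{K_1}$ already centralizes $K_1$, we get $c_1 \in \zz{K_1 K_2 \cdots K_n} = \zz{K}$, hence certainly $c_1 \in \zz{K_2 * \cdots * K_n}$. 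This establishes $Z \leq \zz{K_1} \cap \zz{K_2 * \cdots * K_n}$, as required.

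I expect the main obstacle to be bookkeeping rather than anything deep: one must be careful that the isomorphism in Lemma~\ref{p:6} is genuinely induced by $\pi_1$ and that precomposing with the section $c_1 \mapsto c_1 c_1^{a_2} \cdots c_1^{a_n}$ and then postcomposing with $\varphi$ correctly identifies $Z$ as a kernel; and one must verify that coprimality is preserved under the quotient map $\varphi$ so that Coprime Action(c) applies. Both are routine, but the regularity hypothesis ($B = 1$) is what makes the argument collapse cleanly to a statement about $K_1$ alone, so it is worth flagging that this is exactly where it is used.
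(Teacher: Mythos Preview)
Your proposal is correct and follows essentially the same route as the paper: both pass to the external direct product $\widetilde{K} = K_{1}\times\cdots\times K_{n}$, use \CoprimeActionQuot\ on the multiplication epimorphism $\widetilde{K}\to K$ to identify $\cz{K}{A}$ as the image of $\cz{\widetilde{K}}{A}$, recognize the latter as the diagonal $\set{(k,k^{a_{2}},\ldots,k^{a_{n}})}{k\in K_{1}}\isomorphic K_{1}$ (you via Lemma~\ref{p:6} with $B=1$, the paper ``visibly''), and then check that the kernel $Z$ lands in $\zz{K_{1}}\cap\zz{K_{2}*\cdots*K_{n}}$ by writing $c_{1}=(c_{1}^{a_{2}}\cdots c_{1}^{a_{n}})^{-1}\in K_{1}\cap(K_{2}*\cdots*K_{n})$. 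The only cosmetic difference is that the paper reaches the containment in $\zz{K_{2}*\cdots*K_{n}}$ directly from this intersection, whereas you detour through $\zz{K}$; both are fine.
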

\begin{proof}
    For each $i$ let $a_{i}$ be the unique member of $A$ with $K_{i} = K_{1}^{a_{i}}$,
    so $a_{1} = 1$.
    The map $\tau:k \mapsto k^{a_{1}}\cdots k^{a_{n}}$ is a
    homomorphism $K_{1} \longrightarrow \cz{K}{A}$.
    If $k \in \kernel{\tau}$ then
    $k = k^{a_{1}} = (k^{a_{2}}\cdots k^{a_{n}})^{-1} \in K_{1} \cap (K_{2} * \cdots * K_{n})
    \leq \zz{K_{1}} \cap \zz{K_{2} * \cdots * K_{n}}$.
    In order to complete the proof,
    it suffices to show that $\tau$ is surjective.

    Consider the external direct product $\widetilde{K} = K_{1} \times\cdots\times K_{n}$
    and the map $\sigma:\widetilde{K} \longrightarrow K$ defined by
    $(k_{1}, \ldots, k_{n})\sigma = k_{1}\cdots k_{n}$.
    Then $A$ acts coprimely on $\widetilde{K}$ and $\sigma$ is an $A$-epimorphism.
    By \CoprimeActionQuot, $\cz{\widetilde{K}}{A}\sigma = \cz{K}{A}$.
    Visibly $\cz{\widetilde{K}}{A} = \set{ (k^{a_{1}},\ldots,k^{a_{n}}) }{k \in K_{1}}$
    and the proof is complete.
\end{proof}

\begin{Lemma}\label{p:8}
    Let $A$ be a group that acts coprimely on the group $X$.
    Suppose that $AX$, the semidirect product of $X$ with $A$,
    acts on the set $\Omega$ and that $A$ acts transitively on $\Omega$.
    Then $X$ acts trivially on $\Omega$.
\end{Lemma}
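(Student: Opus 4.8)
The plan is to argue entirely at the level of the orbits of $X$ on $\Omega$, via a coprimeness-divisibility argument. Since $A$ is finite and transitive on $\Omega$, the set $\Omega$ is itself finite, so Lagrange's theorem is available. The first step is to observe that, because $X \normal AX$, each $a \in A$ carries an $X$-orbit on $\Omega$ to an $X$-orbit: with actions written on the right as in the paper, $(\omega X)a = (\omega a)X^{a} = (\omega a)X$. Hence $A$ permutes the set of $X$-orbits on $\Omega$, and since $A$ is transitive on $\Omega$ it follows at once that $A$ is transitive on this set of $X$-orbits. Consequently all $X$-orbits on $\Omega$ share a common cardinality $m$, and $\card{\Omega} = mt$ where $t$ is the number of orbits; in particular $m \mid \card{\Omega}$.

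Next I would combine two divisibility facts. By the orbit-stabilizer theorem, $m = \card{\omega X} = \card{X : \stab{X}{\omega}}$ divides $\card{X}$. On the other hand, transitivity of $A$ on $\Omega$ gives $\card{\Omega} \mid \card{A}$, so $m \mid \card{A}$ as well. Since $A$ acts coprimely on $X$ in the sense of Definition~\ref{p:3}, the integers $\card{A}$ and $\card{X}$ are coprime, and therefore $m = 1$. Thus every $X$-orbit on $\Omega$ is a singleton, which is exactly the assertion that $X$ acts trivially on $\Omega$.

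I do not expect a genuine obstacle. The only two points requiring care are bookkeeping: that $A$ really does permute the $X$-orbits (this is where normality $X \normal AX$ is used, not merely $X \leq AX$), and that the orbits are therefore all of equal size (here transitivity of $A$ on $\Omega$ upgrades ``$A$ permutes the orbits'' to ``$A$ is transitive on the orbits''). Once those are in place, the conclusion drops out immediately from the coprimeness hypothesis and Lagrange's theorem; no appeal to Theorem~\ref{p:4} or to the solvability clause of coprime action is needed beyond $\gcd(\card{A},\card{X}) = 1$.
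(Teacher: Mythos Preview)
Your argument is correct. It differs from the paper's proof, which instead fixes $\alpha \in \Omega$, uses transitivity of $A$ to obtain the factorization $AX = A\,\stab{AX}{\alpha}$, and then for each prime $p \in \primes{X}$ observes that the index of $\stab{AX}{\alpha}$ in $AX$ is a $p'$-number, so $\stab{AX}{\alpha}$ contains a Sylow $p$-subgroup $P$ of $AX$; since $X$ is a normal Hall-subgroup, $P \leq X$, and running over all $p \in \primes{X}$ forces $X \leq \stab{AX}{\alpha}$.

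Both arguments ultimately exploit the same numerical fact that $\card{AX:\stab{AX}{\alpha}}$ divides $\card{A}$ and is therefore coprime to $\card{X}$. The paper reads this off via Sylow theory inside $AX$; you read it off by looking at the common $X$-orbit length $m$ and noting $m \mid \card{X}$ and $m \mid \card{\Omega} \mid \card{A}$. Your route is marginally more elementary in that it avoids Sylow's theorem entirely and works purely with orbit--stabilizer and Lagrange, at the small cost of the preliminary observation that $A$ permutes the $X$-orbits (which is where you use $X \normal AX$). The paper's route has the minor advantage of never needing to discuss the set of $X$-orbits at all. Neither approach requires the solvability clause in Definition~\ref{p:3}; both use only $\gcd(\card{A},\card{X}) = 1$, as you note.
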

\begin{proof}
    Choose $\alpha \in \Omega$.
    Let $p \in \primes{X}$.
    Now $AX = A\stab{AX}{\alpha}$ because $A$ is transitive.
    As $A$ is a $p'$-group it follows that $\stab{AX}{\alpha}$
    contains a Sylow $p$-subgroup $P$ of $AX$.
    Now $X$ is a normal Hall-subgroup of $AX$,
    whence $P \leq X$.
    It follows that $X \leq \stab{AX}{\alpha}$.
    Now $\alpha$ was arbitrary,
    so $X$ acts trivially on $\Omega$.
\end{proof}

\begin{Lemma}\label{p:9}
    Let $\field{F}$ be a field, $G$ a group and $V$ an $\pring{F}{G}$-module.
    \begin{enumerate}
        \item[(a)]  Suppose that $\fieldchar{F}$ does not divide $\card{G}$.
                    Then \[
                        V = \cz{V}{G} \oplus [V,G].
                    \]

        \item[(b)]  Suppose $V$ is faithful and $\fieldchar{F} = p$.
                    Then \[
                        \op{p}{G} = \bigcap\cz{G}{U}
                    \]
                    where $U$ ranges over the irreducible constituents  of $V$
                    and $\op{p}{G}$ is defined to be $1$ if $p=0$.
    \end{enumerate}
\end{Lemma}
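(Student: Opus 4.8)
The plan is to obtain (a) from the averaging idempotent of $\pring{F}{G}$, and to prove (b) by passing to a composition series of $V$ together with the standard fact that a finite $p$-group has a nonzero fixed point on every nonzero module in characteristic $p$. I take $G$ finite and $V$ of finite length, so that $\card{G}$, the irreducible constituents of $V$, and composition series of $V$ are all available.

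For (a): since $\fieldchar{F}$ does not divide $\card{G}$, the integer $\card{G}$ is invertible in $\field{F}$, and I set $e = \card{G}^{-1}\sum_{g \in G} g \in \pring{F}{G}$. A reindexing of the sum shows $ge = eg = e$ for every $g \in G$, so $e$ is a central idempotent and $V = Ve \oplus V(1-e)$ as $\field{F}G$-modules. Here $Ve = \cz{V}{G}$: if $v = ve$ then $vg = (ve)g = v(eg) = ve = v$, while if $v \in \cz{V}{G}$ then $ve = \card{G}^{-1}\sum_{g \in G} vg = v$. And $V(1-e) = [V,G]$: on one hand $v(g-1)e = v(ge - e) = 0$, so $[V,G] \leq V(1-e)$; on the other, if $ve = 0$ then $v = -\card{G}^{-1}\sum_{g \in G} v(g-1) \in [V,G]$. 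Hence $V = \cz{V}{G} \oplus [V,G]$.

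For (b): each $\cz{G}{U}$ is the kernel of the action of $G$ on $U$, hence normal in $G$, so $N := \bigcap_{U} \cz{G}{U}$ is a normal subgroup of $G$, and it suffices to show $N = \op{p}{G}$. For $\op{p}{G} \leq N$, fix an irreducible constituent $U$ and put $P = \op{p}{G}$; there is nothing to prove when $p = 0$, and when $p > 0$ one has $\cz{U}{P} \neq 0$ — induct on $\card{P}$, using that a central subgroup $\listgen{z}$ of $P$ of order $p$ acts on $U$ with $(z-1)^{p} = z^{p} - 1 = 0$, so that $z-1$ is nilpotent, $\cz{U}{\listgen{z}} \neq 0$, and induction applies to the nonzero $\field{F}[P/\listgen{z}]$-module $\cz{U}{\listgen{z}}$. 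Since $P \normal G$, the subspace $\cz{U}{P}$ is $G$-invariant, so $\cz{U}{P} = U$ by irreducibility, i.e. $P \leq \cz{G}{U}$; as $U$ was arbitrary, $\op{p}{G} \leq N$. For $N \leq \op{p}{G}$, fix a composition series $0 = V_{0} \leq V_{1} \leq \cdots \leq V_{m} = V$ of the $\field{F}G$-module $V$. Each factor $V_{i}/V_{i-1}$ is an irreducible constituent, so $N$ centralizes it, and therefore every $g \in N$ satisfies $(g-1)V_{i} \leq V_{i-1}$ for all $i$; thus $(g-1)^{m}$ annihilates $V$. If $p = 0$, a unipotent operator on $V$ of finite order is the identity, so $g$ acts trivially on $V$ (as $G$ is finite) and faithfulness forces $N = 1 = \op{p}{G}$. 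If $p > 0$, then $(g-1)^{p^{k}} = g^{p^{k}} - 1$ as operators on $V$, and this vanishes once $p^{k} \geq m$, so $g$ has $p$-power order by faithfulness; hence $N$ is a normal $p$-subgroup of $G$ and $N \leq \op{p}{G}$.

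The only step that is more than bookkeeping is the forward inclusion in (b): that $\op{p}{G}$ centralizes every irreducible $\field{F}G$-constituent when $\fieldchar{F} = p$. This is where the $p$-group fixed-point lemma enters. The idempotent computation in (a), the composition-series reduction in (b), and the uniform treatment of the degenerate case $p = 0$ are all routine.
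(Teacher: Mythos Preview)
Your argument is correct. The approaches, however, differ from the paper's in both parts.

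For (a), the paper invokes Maschke's Theorem to write $V$ as a direct sum of irreducible submodules and then observes that $\cz{V}{G}$ collects the trivial summands while $[V,G]$ collects the nontrivial ones. Your idempotent computation is more elementary: it does not pass through a complete decomposition into irreducibles and in fact does not even need $V$ to have finite length for this part.

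For the reverse inclusion in (b), the paper proceeds Sylow-wise: for a prime $q \neq p$ it takes $Q \in \syl{q}{N}$, uses a composition series to obtain $[V,Q,\ldots,Q] = 0$, and then feeds this back into part~(a) (applied with $Q$ in place of $G$) to force $[V,Q] = 0$, hence $Q = 1$ by faithfulness. You instead work element-by-element, showing directly that each $g \in N$ acts unipotently and then using the freshman's dream $(g-1)^{p^{k}} = g^{p^{k}} - 1$ in characteristic $p$ (respectively the separability of $X^{n}-1$ in characteristic $0$) to conclude that $g$ has $p$-power order. Your route is self-contained and avoids the loop back to (a); the paper's route has the virtue of reusing (a) and makes the role of coprimeness explicit. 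Both are standard and equally short.
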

\begin{proof}
    (a). By Maschke's Theorem, $V$ is a direct sum of irreducible submodules.
    Then $\cz{V}{G}$ is the sum of those submodules that are trivial and $[V,G]$
    is the sum of those modules that are nontrivial.

    (b). Suppose $p=0$.
    Then we may write $V$ as a direct sum of irreducible submodules,
    whence the intersection acts trivially on $V$.
    Suppose $p>0$.
    If $U$ is any irreducible $\pring{F}{G}$-module then $\cz{U}{\op{p}{G}} \not= 0$
    whence $\op{p}{G} \leq \cz{G}{U}$.
    Thus $\op{p}{G}$ is contained in the intersection.
    Let $q$ be a prime not equal to $p$ and let $Q$ be a Sylow $q$-subgroup of the intersection.
    By considering a composition series for $V$,
    we have $[V,Q, \ldots, Q] = 0$ and then (a),
    with $Q$ in the role of $G$,
    implies $[V,Q] = 0$.
    Then $Q = 1$ and we deduce that the intersection is a $p$-group.
\end{proof}

\begin{Lemma}\label{p:10}
    Let $R$ be a group of prime order $r$ that acts on the $q$-group $Q$
    with $q \not= r$ and $[Q,R] \not= 1$.
    Let $V$ be an $\pring{F}{RQ}$-module where $\field{F}$ is a field
    with $\fieldchar{F} \not= q$.
    Assume that $[Q,R]$ acts nontrivially on $V$.
    If $q=2$ and $r$ is a Fermat prime assume that $Q$ is abelian.
    Then $\pring{F}{R}$ is a direct summand of $V_{R}$.
    In particular $\cz{V}{R} \not= 0$.
\end{Lemma}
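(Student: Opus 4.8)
The plan is to reduce to the case where $Q$ is an $R$-irreducible module in characteristic $q$ — or rather, to peel off a suitable quotient of $Q$ on which $R$ acts nontrivially and which acts nontrivially on a constituent of $V$ — and then invoke Lemma~\ref{p:9} together with a dimension/character count on $\cz{V}{R}$. First I would pass to an irreducible $\pring{F}{RQ}$-constituent $U$ of $V$ on which $[Q,R]$ still acts nontrivially; such a $U$ exists since $[Q,R]$ acts nontrivially on $V$ and $\fieldchar{F}\neq q$, so $V$ is a sum of irreducibles for the normal subgroup $Q$, and $[Q,R]\normal RQ$. It suffices to show $\pring{F}{R}$ is a direct summand of $U_R$, since $\cz{V}{R}\supseteq\cz{U}{R}$ and a free summand of $U_R$ gives a free summand of $V_R$ by Maschke applied to the complement (here we use $\fieldchar{F}\nmid r$, which holds as $r$ divides $|Q|$... no — as $R$ and $Q$ have coprime order and $\fieldchar{F}\neq q$; if $\fieldchar{F}=r$ we would need a separate remark, but in the coprime-action setting $\fieldchar{F}$ divides neither $|R|$ nor $|Q|$ can be arranged, or one argues directly). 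Then $R$ acts on $U$, and the question is purely about the single cyclic group $R=\cyclic{r}$ acting on the $\field{F}$-space $U$: I want to rule out that $U_R$ is a sum of nontrivial irreducibles only.

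The key point is to analyze how $R$ can act on $U$ given that $[Q,R]\neq 1$ acts nontrivially on $U$. Here I would use Clifford theory for $Q\normal RQ$: $U_Q$ is a sum of $R$-conjugate $Q$-homogeneous components, and since $|R|=r$ is prime, the number of these components is $1$ or $r$. If it is $r$, then $R$ permutes them regularly and $U_R$ contains the permutation module $\pring{F}{R}$ as a summand (the fixed points under $Q$ within any one component, transported around, build a copy of the regular module), giving the conclusion immediately. So I may assume $U_Q$ is $Q$-homogeneous, i.e. $U_Q\isomorphic W^m$ for a single irreducible $\field{F}Q$-module $W$ with $[Q,R]$ nontrivial on $W$. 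Now $R$ normalizes the homogeneous component, so $R$ acts on the "multiplicity space" and the genuine content is the action of $R$ on $W$ and on $\End{}{W}$-twisted data. The crucial sub-case is $W$ faithful for $[Q,R]$, $W$ absolutely irreducible, $\dim W = |Q/\cz{Q}{W}|^{1/2}$-ish — essentially $W$ is (a Galois twist of) the unique faithful irreducible, and the trace of any $R$-eigenvalue computation shows $\cz{U}{R}\neq 0$ unless a very special numerical coincidence occurs, which is exactly the excluded Fermat-prime-and-$q=2$ configuration.

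The hard part — and the reason for the Fermat-prime hypothesis — is precisely this last homogeneous, faithful sub-case: one must show that $R$ acting on $W$ has $\pring{F}{R}$ as a summand of $W_R$ (or of $U_R$ after accounting for multiplicities). The mechanism is a count of Brauer characters / eigenvalues: writing $\chi$ for the Brauer character of $R$ on $W$ (lift to characteristic $0$), $\langle\chi,1_R\rangle = \tfrac1r\sum_{x\in R}\chi(x)$, and $\chi(1)=\dim W$ while $\chi(x)$ for $x\neq 1$ is controlled by the action on $W$ via the $RQ$-structure (e.g. by counting fixed points of $x$ on $Q$ and using that $W$ is, up to twist, the regular-minus-trivial type module for $[Q,R]$ with $R$ permuting). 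The inequality $\langle\chi,1_R\rangle\geq 1$ — equivalently, the free-summand conclusion — then reduces to $\sum_{x\neq 1}\chi(x)\geq \dim W - r + 1$ or similar, and the obstruction is exactly when $\dim W$ and $r$ interact as for $q=2$ with $r$ Fermat: there $\dim W$ can be as small as $2$ with $r = 2^k+1$ forcing $[Q,R]$ cyclic of order $r$... whence the abelian hypothesis on $Q$ saves the day by forcing $W$ one-dimensional on each cyclic piece and restoring a fixed point. I would organize the write-up as: (1) reduce to irreducible $U$; (2) Clifford reduction to $Q$-homogeneous, settling the imprimitive case via the regular permutation module; (3) in the homogeneous case, reduce to $W$ faithful and absolutely irreducible by enlarging $\field{F}$ and splitting off $\cz{Q}{W}$; (4) the eigenvalue/Brauer-character count, with the abelian-$Q$ hypothesis invoked only to dispatch the residual Fermat case. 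I expect step (4) to be the main obstacle.
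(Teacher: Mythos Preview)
The paper does not prove this lemma from scratch at all: its entire proof is ``By \CoprimeActionComm\ we may assume $Q=[Q,R]$. Apply \cite[Theorem~5.1]{F1}.'' So what you have written is not a comparison target so much as an outline of the cited result itself, and it should be judged as such.

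Your outline is the classical Hall--Higman reduction and is structurally sound through step~(3): passing to $[V,[Q,R]]$ (which splits off since $\fieldchar{F}\neq q$), taking an irreducible $RQ$-submodule $U$ therein, and disposing of the Clifford-imprimitive case via $U_R\cong(\pring{F}{R})^{\dim W_1}$ are all correct. Your worry about $\fieldchar{F}=r$ is legitimate but resolvable: $\pring{F}{R}$ is injective as an $\field{F}R$-module in every characteristic (it is the group algebra of a finite group), so a free $R$-summand of a submodule $U\leq V$ is automatically a summand of $V_R$; no appeal to Maschke for $R$ is needed.

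The genuine gap is your step~(4). In the $Q$-homogeneous case, the Brauer-character inequality you sketch is neither stated precisely nor proved, and when $\fieldchar{F}=r$ a character count alone cannot detect a free summand: one must show the generator of $R$ has minimal polynomial $X^r-1$ on $U$, not merely that the trivial eigenvalue occurs. This is exactly the content of the Hall--Higman analysis (and of \cite[Theorem~5.1]{F1}), where one reduces to $Q$ extraspecial or abelian, realises $U$ concretely, and computes the minimal polynomial; the Fermat exclusion enters because for $q=2$, $r=2^n+1$, $Q$ extraspecial of order $2^{2n+1}$ one gets $\dim U=2^n=r-1$ and the minimal polynomial is $(X^r-1)/(X-1)$. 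You correctly identify this as the obstacle, but you have not supplied the argument, so as written the proposal is an accurate roadmap rather than a proof.
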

\begin{proof}
    By \CoprimeActionComm\ we may assume $Q = [Q,R]$.
    Apply \cite[Theorem~5.1]{F1}.
\end{proof}

The following is an easy special case of the main result of \cite{F1}.

\begin{Lemma}\label{p:11}
    Let $r,t$ and $p$ be primes.
    Suppose the group $R \times S$ acts on the group $T$ and that
    $V$ is an $\pring{F}{RST}$-module with $\field{F}$ a field of characteristic $p$.
    Assume that:
    \begin{enumerate}
        \item[(i)]  $\card{R} = r$, $S$ is an $r'$-group, $T$ is a $t$-group and $t\not= p$.
        \item[(ii)] $T = [T,S]$.
        \item[(iii)]$[\cz{V}{R},S] = 0$.
        \item[(iv)] If $T$ is nonabelian then $[\cz{V}{R},\cz{T}{R}] = 0$ and $t\not=2$.
    \end{enumerate}
    Then $[V,[T,R]] = 0$.
\end{Lemma}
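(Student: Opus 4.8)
The plan is to reduce to the case where $V$ is irreducible and $T$ is either elementary abelian or special, and then play off the action of $R$ against that of $S$ on $T$. First I would apply Clifford theory and Lemma~\ref{p:9}(b): it suffices to show $[U,[T,R]]=0$ for each irreducible $\pring{F}{RST}$-constituent $U$ of $V$, since $\op{p}{RST}$ (or rather the relevant analogue) is the intersection of the kernels on the irreducible constituents, and $T$ being a $t$-group with $t\neq p$ means $T$ acts faithfully iff it acts faithfully on some constituent. So without loss $V$ is irreducible and faithful for the relevant part of $T$. Next, replacing $T$ by a suitable $R\times S$-invariant section, I would arrange that $T$ is either elementary abelian or that $\Phi(T)=[T,T]=\zz{T}$ is elementary abelian and $T/\zz{T}$ is elementary abelian (the special case), using that $[T,S]=T$ forces any $R\times S$-invariant proper normal subgroup structure to be controlled by coprime action (Coprime Action, Theorem~\ref{p:4}).

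The heart of the argument, I expect, is the abelian case. Here $T$ is elementary abelian, $V$ is a faithful irreducible $\pring{F}{RST}$-module, $T=[T,S]$, and $[\cz{V}{R},S]=0$. Set $T_0=[T,R]$; by Coprime Action(a), $T=\cz{T}{R}T_0$ and $T_0=[T_0,R]$. I want $[V,T_0]=0$, i.e. $T_0$ is in the kernel. Suppose not. Since $T_0$ is abelian and $R$ acts on it with $[T_0,R]=T_0$, the group $RT_0$ acting on $V$ falls under Lemma~\ref{p:10}: with $Q=T_0$, $q=t$, provided $t\neq p$ (given), and provided that if $t=2$ and $r$ is Fermat we have $T_0$ abelian (it is). So $\pring{F}{R}$ is a direct summand of $V_R$ whenever $T_0$ acts nontrivially on the relevant constituent, and in particular $\cz{V}{R}\neq 0$ and, more usefully, $\cz{V}{R}$ contains a nonzero $R$-fixed vector on which... — the key point I would extract is that $\cz{V}{R}$ is a nonzero $\cz{ST}{R}$-module, $\cz{T}{R}$ acts on it, and by hypothesis (iii) $S$ acts trivially on $\cz{V}{R}$. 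Now $T=\cz{T}{R}T_0$ and $[T,S]=T$, so $[\cz{T}{R},S]T_0 \cdot(\text{stuff in }T_0)=T$, forcing $[\cz{T}{R},S]$ to generate $T$ modulo $T_0$. But $S$ centralizes $\cz{V}{R}$, so $[\cz{T}{R},S]$ — a subgroup of $T$ — must act on $\cz{V}{R}$ in a way constrained by $S$-triviality; running this through the semidirect product $STV$ and a Hall/Sylow argument in the spirit of Lemma~\ref{p:8} should force $T_0$ to act trivially on enough of $V$ to contradict faithful irreducibility.

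For the special (nonabelian) case, hypothesis (iv) supplies exactly what is missing: $[\cz{V}{R},\cz{T}{R}]=0$ and $t\neq 2$. Here I would pass to $T/\zz{T}$, which is elementary abelian and on which the abelian case applies to each $RST$-section, deduce $[V,[T,R]\zz{T}/\zz{T}]$ triviality, then lift: $[T,R]$ modulo $\zz{T}$ acts trivially, and $\zz{T}=\Phi(T)$ acts via a quotient already handled, so $[V,[T,R]]\leq[V,\zz{T}]$, and the condition $t\neq 2$ together with $[\cz{V}{R},\cz{T}{R}]=0$ kills the residual action by a Three Subgroups / Lemma~\ref{p:5}-style computation applied inside $TV$. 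The main obstacle, I expect, is the bookkeeping in the abelian case: correctly organizing the interplay between $\cz{T}{R}$ (which $S$ acts on nontrivially, since $[T,S]=T$) and $T_0=[T,R]$ (which $S$ may or may not centralize) so that the $S$-triviality on $\cz{V}{R}$ from (iii) propagates to $T_0$-triviality on all of $V$. This is where the real content of \cite{F1} referenced before the lemma presumably enters, and I would expect the author's proof to cite a sharper extraction from that paper than the black-box Lemma~\ref{p:10} stated here.
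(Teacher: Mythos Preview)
Your proposal has a genuine gap: the abelian case is where the argument lives, and you do not actually carry it out. The paragraph ends with ``running this through the semidirect product $STV$ and a Hall/Sylow argument in the spirit of Lemma~\ref{p:8} should force $T_{0}$ to act trivially on enough of $V$'', which is a hope, not a proof. You also correctly identify this as the main obstacle and predict the paper will import something sharper from \cite{F1}; in fact it does not --- apart from reducing to an algebraically closed field, the proof is self-contained.

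The paper's route is different from your special/elementary-abelian reduction. After passing to algebraically closed $\field{F}$ and replacing $V$ by $[V,[T,R]]$ (so $\cz{V}{[T,R]}=0$ and $T$ is faithful on $V$), it decomposes $V = V_{1}\oplus\cdots\oplus V_{n}$ into homogeneous components for $\zz{T}$. The key step is to show $R$ normalizes each $V_{i}$. If not, take an $R$-orbit $\{V_{1},\ldots,V_{r}\}$ and set $W = V_{1}\oplus\cdots\oplus V_{r}$; then $\cz{W}{R}$ is diagonal in $W$, and since $[\cz{W}{R},S]=0$ the group $S$ must permute $\{V_{1},\ldots,V_{r}\}$. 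Now Lemma~\ref{p:8} --- applied to the permutation action of $RS$ on this $R$-orbit --- forces $S$ to fix each $V_{i}$, and then $[\cz{W}{R},S]=0$ forces $S$ to centralize $V_{1}$; hence $T=[T,S]$ centralizes $V_{1}$, contradicting $\cz{V}{[T,R]}=0$. So $R$ normalizes every $V_{i}$. Then $\zz{T}$ acts by scalars on each $V_{i}$, so $[\zz{T},R]$ is trivial on each $V_{i}$, hence $[\zz{T},R]=1$. This already gives the abelian case. For nonabelian $T$, hypothesis~(iv) gives $\cz{V}{R}\leq\cz{V}{\zz{T}}$; Lemma~\ref{p:10} (using $t\neq 2$) gives $\cz{V_{i}}{R}\neq 0$, hence $\cz{V_{i}}{\zz{T}}\neq 0$, hence $\zz{T}$ is trivial on $V_{i}$, hence $\zz{T}=1$, hence $T=1$.

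Your instinct that Lemma~\ref{p:8} is the hinge was correct, but the place to apply it is to the permutation action of $R$ and $S$ on the $\zz{T}$-homogeneous components of $V$, not inside $STV$. That Clifford-theoretic decomposition is the idea your sketch is missing.
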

\begin{proof}
    By \cite[Lemma~2.2]{F1} we may assume that $\field{F}$ is algebraically closed.
    Now $V = \cz{V}{[T,R]} \oplus [V,[T,R]]$ by Theorem~\ref{p:9}(a)
    and $[T,R] \normal RST$ so $[V,[T,R]]$ is an $RST$-module,
    hence we may suppose that $\cz{V}{[T,R]} = 0$ and moreover that $T$ acts
    faithfully on $V$.
    Let $V_{1}, \ldots, V_{n}$ be the homogeneous components for $\zz{T}$.
    Then $T$ normalizes each $V_{i}$ and $RS$ permutes the $V_{i}$ amongst themselves.
    Since $t\not=p$ we have $V = V_{1} \oplus\cdots\oplus V_{n}$.

    Suppose that $R$ does not normalize each $V_{i}$.
    Then without loss $\listset{V_{1},\ldots,V_{r}}$ is an orbit for the action of $R$.
    Set $W = V_{1} \oplus\cdots\oplus V_{r}$ so $\cz{W}{R}$ is a diagonal subspace of $W$.
    By assumption $[\cz{W}{R},S] = 0$ so $S$ permutes the $V_{i}$
    onto which $\cz{W}{R}$ projects nontrivially.
    We deduce that $S$ permutes $\listset{V_{1},\ldots,V_{r}}$.
    Lemma~\ref{p:8} implies that $S$ normalizes each $V_{i}$, $1 \leq i \leq r$.
    Then as $[\cz{W}{R},S] = 0$ it follows that $S$ centralizes $V_{1}$.
    But $T = [T,S]$ so $T$ centralizes $V_{1}$,
    contrary to $\cz{V}{[T,R]} = 0$.
    We deduce that $R$ normalizes each $V_{i}$.

    Choose $i$ with $1 \leq i \leq n$.
    Now $V_{i}$ is a homogeneous component for $\zz{T}$ and $\field{F}$
    is algebraically closed so $\zz{T}$ acts as scalar multiplication on $V_{i}$.
    Thus $[\zz{T},R]$ is trivial on $V_{i}$.
    As $V = V_{1} \oplus\cdots\oplus V_{n}$ we deduce that $[\zz{T},R] = 1$.
    In particular, the conclusion has been established in the case that
    $T$ is abelian,
    hence we assume that $T$ is nonabelian.

    By assumption $[\cz{V}{R},\cz{T}{R}] = 0$ so $\cz{V}{R} \leq \cz{V}{\zz{T}}$.
    Also $t\not=2$ so as $\cz{V}{[T,R]} = 0$, Lemma~\ref{p:10} implies $\cz{V_{i}}{R} \not= 0$.
    Consequently $\cz{V_{i}}{\zz{T}} \not= 0$.
    Now $V_{i}$ is a homogeneous component for $\zz{T}$ whence $\zz{T}$
    is trivial on $V_{i}$.
    Since $V = V_{1} \oplus\cdots\oplus V_{n}$ it follows that $\zz{T} = 1$.
    Then $T = 1$ and the result is established in this case also.
\end{proof}

\begin{Lemma}\label{p:12}
    Suppose the group $A$ acts on the constrained group $G$.
    Then \[
        \ff{G} = \bigcap\cz{G}{V}
    \]
    where $V$ ranges over the $A$-chief factors of $G$ below $\ff{G}$.
\end{Lemma}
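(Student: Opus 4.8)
The plan is to prove the two inclusions of the displayed equality separately. Write $D = \bigcap_{V}\cz{G}{V}$, the intersection over the $A$-chief factors $V$ of $G$ lying below $\ff{G}$; for such a factor $V = X/Y$ the subgroups $X, Y$ are $A$-invariant and normal in $G$, and a routine conjugation argument shows that $\cz{G}{X/Y} = \set{g \in G}{[g,X] \leq Y}$, and hence also $D$, is normalized by $AG$. For the inclusion $\ff{G} \leq D$, I would fix such a $V = X/Y$, pass to $\br{G} = G/Y$, and use that $\br{\ff{G}} = \ff{G}Y/Y$ is a nilpotent $A$-invariant normal subgroup of $\br G$ containing the minimal $A$-invariant normal subgroup $\br{X}$. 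Then $[\br{\ff{G}},\br{X}]$ is an $A$-invariant normal subgroup of $\br{G}$ inside $\br{X}$, hence equals $1$ or $\br{X}$; it cannot be $\br{X}$, since $\br{X} \leq \br{\ff{G}}$ forces the iterated commutators $[\br{\ff{G}},\ldots,\br{\ff{G}},\br{X}]$ into $\gamma_{m+1}(\br{\ff{G}}) = 1$. So $[\ff{G},X] \leq Y$, i.e. $\ff{G} \leq \cz{G}{V}$ for every such $V$. (This is just the familiar fact that a nilpotent normal subgroup centralizes the chief factors it contains, carried over to groups with operators.)

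The substantial half is $D \leq \ff{G}$. Since $D \normal G$, it suffices to prove that $D$ is nilpotent, and this is where I would invoke that $G$ is constrained, so that $\cz{G}{\ff{G}} = \zz{\ff{G}} \leq \ff{G}$. Fix an $AG$-chief series $1 = F_{0} < \cdots < F_{k} = \ff{G}$ of $\ff{G}$: each $F_{i}/F_{i-1}$ is an $A$-chief factor of $G$ below $\ff{G}$, hence is centralized by $D$, so $D$ induces on $\ff{G}$ a group of automorphisms stabilizing this chain, and by the standard theorem that the stability group of a subgroup chain is nilpotent, $D/\cz{D}{\ff{G}}$ is nilpotent. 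It then remains to handle $\cz{D}{\ff{G}} = D \cap \zz{\ff{G}}$, which need not be central in $D$: it is an $A$-invariant normal subgroup of $G$ contained in $\ff{G}$, so refining $1 < \cz{D}{\ff{G}} < \ff{G}$ to an $AG$-chief series and using once more that $D$ centralizes every $A$-chief factor of $G$ below $\ff{G}$ shows that $D$ acts nilpotently on $\cz{D}{\ff{G}}$. Combining this with the nilpotency of $D/\cz{D}{\ff{G}}$ gives that $D$ is nilpotent, whence $D \leq \ff{G}$, and the two inclusions yield $\ff{G} = D$.

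The main obstacle I anticipate is precisely this second inclusion. One must resist the temptation to prove that $D$ centralizes $\ff{G}$ — that is false in general (e.g. $G = \sltwo{3}$, where $D = \ff{G}$ is nonabelian) — and instead show that $D$ is nilpotent; the delicate point is that the kernel $\cz{D}{\ff{G}}$ of the conjugation action of $D$ on $\ff{G}$ is not central in $D$, which is dealt with by the observation that it too is assembled from $A$-chief factors of $G$ below $\ff{G}$. The two off-the-shelf ingredients are: a nilpotent normal subgroup centralizes the chief factors it contains, and the stability group of a subgroup chain is nilpotent (Kaloujnine--Hall).
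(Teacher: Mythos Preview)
Your argument is correct. For the hard inclusion $D \leq \ff{G}$ the paper takes a somewhat different route. Having reduced, as you do, to showing that any $A$-invariant normal subgroup $D$ of $G$ with $[\ff{G},D,\ldots,D]=1$ lies in $\ff{G}$, it argues by induction: if $D' < D$ then $D' \leq \ff{G}$ by the inductive hypothesis, whence $[D',D,\ldots,D] = 1$ and $D$ is nilpotent; in the remaining case $D = D'$, a Three Subgroups argument gives $[\ff{G},D] = [D,D,\ff{G}] \leq [\ff{G},D,D]$, forcing $[\ff{G},D] = 1$, and then constrainedness yields $D \leq \ff{G}$. Your approach via the stability-group theorem and the hypercentrality of $\cz{D}{\ff{G}}$ in $D$ handles everything uniformly, avoiding both the case split on perfectness and the Three Subgroups manoeuvre, at the price of quoting Kaloujnine--Hall; the paper's argument is more self-contained but a touch more ad hoc. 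One small streamlining of your write-up: once $D$ stabilizes an $AG$-chief series of $\ff{G}$ you already have $[\ff{G},D,\ldots,D] = 1$, and since $\cz{D}{\ff{G}} \leq \zz{\ff{G}} \leq \ff{G}$ this immediately gives $[\cz{D}{\ff{G}},D,\ldots,D] = 1$; there is no need to refine the series separately through $\cz{D}{\ff{G}}$.
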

\begin{UnnumberedRemark}
    The $A$-chief factors of $G$ below $\ff{G}$ are by definition
    the quotients $X/Y$ where $X$ and $Y$ are $A$-invariant normal
    subgroups of $G$ with $Y < X \leq \ff{G}$ and $X/Y$ being the only
    nontrivial $A$-invariant normal subgroup of $X/Y$.
    In particular, $X/Y$ is an elementary abelian $p$-group for some prime $p$
    and an irreducible $\gfpring{p}{AG}$-module.
\end{UnnumberedRemark}
\begin{proof}
    If $1 < N \normal F$ with $F$ nilpotent then $[N,F] < N$.
    It follows that $\ff{G}$ is contained in the right hand side.
    To prove the opposite inclusion,
    it suffices to show that if $D$ is an $A$-invariant normal subgroup
    of $G$ with $[\ff{G},D,\ldots,D] = 1$ then $D \leq \ff{G}$.

    Suppose that $D' < D$.
    Then by induction, $D' \leq \ff{G}$ whence $[D',D,\ldots,D] = 1$.
    Thus $D$ is nilpotent.
    As $D \normal G$ we have $D \leq \ff{G}$ as desired.
    Hence we may assume that $D' = D$.
    We have $[\ff{G},D] = [D,\ff{G}]$ so $[\ff{G},D,D] = [D,\ff{G},D] \normal G$
    so $[D,D,\ff{G}]  \leq [\ff{G},D,D]$ by the Three Subgroups Lemma.
    Now $[\ff{G},D] = [D,\ff{G}] = [D,D,\ff{G}] \leq [\ff{G},D,D]$.
    As $[\ff{G},D,\ldots,D] = 1$ this forces $[\ff{G},D] = 1$.
    Since $G$ is constrained we have $D \leq \ff{G}$ and the proof is complete.
\end{proof}

\section{Properties of K-groups}\label{k}
The following result collects together all
the specific properties of $K$-groups that we shall use.

\begin{Theorem}\label{k:1}
    Let $K$ be a simple $K$-group and suppose $r$ is a prime
    that does not divide $\card{K}$.
    \begin{enumerate}
        \item[(a)]  The Sylow $r$-subgroups of $\aut{K}$ are cyclic.
    \end{enumerate}

    \noindent Suppose $R \leq \aut{K}$ has order $r$.
    Set $C = \cz{K}{R}$.

    \begin{enumerate}
        \item[(b)]  $C$ possesses a unique minimal normal subgroup $N$.
                    Except for the cases listed in Tables~1 and 2,
                    $C = N$ and $C$ is simple.
                    Either $\gfitt{C}$ is simple or $C$ is solvable.
                    If $C$ is solvable then the possibilities for $C$
                    are listed in Table~1.

        \item[(c)]  $K$ possesses a unique maximal $RC$-invariant solvable subgroup $S$.
                    Suppose $S \not= 1$.
                    The possibilities for $K$ are listed in Table~1;
                    $C$ is solvable; $C \leq S$;
                    and $S$ is maximal subject to being an $RC$-invariant proper subgroup of $K$.

        \item[(d)]  $C$ is contained in a unique maximal $R$-invariant subgroup $M$.
                    If $M \not= C$ then $M$ is solvable and
                    $K \isomorphic \ltwo{2^{r}}$ or $\sz{2^{r}}$.

        \item[(e)]  Suppose that $X$ is an $R$-invariant $r'$-subgroup of $\aut{K}$
                    and that $[C,X] = 1$.
                    Then $X = 1$.

        \item[(f)]  Suppose that $\widetilde{K}$ is quasisimple with
                    $\widetilde{K}/\zz{\widetilde{K}} \isomorphic K$,
                    that $\widetilde{R} \leq \aut{\widetilde{K}}$ has order $r$
                    and that $V$ is a faithful $\pring{F}{\widetilde{R}\widetilde{K}}$-module
                    for some field $\field{F}$.
                    \begin{enumerate}
                        \item[(i)]  $\pring{F}{\widetilde{R}}$ is a direct summand of $V_{\widetilde{R}}$.
                                    In particular $\cz{V}{\widetilde{R}} \not= 0$.

                        \item[(ii)] Suppose $V$ is irreducible.
                                    Then $\layerr{\cz{\widetilde{K}}{\widetilde{R}}}$
                                    acts faithfully on $\cz{V}{\widetilde{R}}$.
                    \end{enumerate}
    \end{enumerate}
\end{Theorem}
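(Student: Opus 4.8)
The plan is to deduce Theorem~\ref{k:1} from the classification of the finite simple groups, reading the required facts off the detailed structure theory of the simple groups of Lie type and their automorphism groups in \cite{GLS}, supplemented by the module-theoretic results of \cite{F1} and the analysis of McBride \cite{McB1, McB2}. The first step is a drastic reduction of the possibilities for $K$. If $K \isomorphic \alt{n}$ with $n \geq 5$, then $\card{K}$ is divisible by every prime not exceeding $n$, so $r \nmid \card{K}$ forces $r > n$; but $\card{\aut{K}}$ divides $\card{\sym{n}} = n!$ (or $4\cdot 6!$ if $n = 6$), which has no prime divisor larger than $n$, so $\aut{K}$ has no subgroup of order $r$ and (a)--(f) are vacuous. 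If $K$ is sporadic then $\card{\out{K}} \in \listset{1,2}$ while $2 \ifo \card{K}$, so again there is no $R$. Hence $K$ is a simple group of Lie type over $\gf{q}$, $q = p^{a}$; since $2 \ifo \card{K}$ the prime $r$ is odd, and since $r \nmid \card{K}$ we have $r \not= p$ and $r$ divides neither $q-1$ nor $q+1$.

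The second step establishes (a) and reduces (b)--(f) to the case of a field automorphism. By Steinberg's description, $\aut{K} = \operatorname{Inndiag}(K)\,(\Phi_{K}\times\Gamma_{K})$, where $\Phi_{K}$ is the cyclic group of field automorphisms and $\Gamma_{K}$ is the group of graph automorphisms. The index $\card{\operatorname{Inndiag}(K):K}$ divides the order of the centre of the simply connected cover, hence divides $q-1$ or $q+1$ and is prime to $r$; and $\card{\Gamma_{K}} \in \listset{1,2}$ unless $K$ has type $D_{4}$, in which case $\card{\Gamma_{K}} = 6$ and one would need $r = 3$, contradicting the fact that $3 \ifo \card{K}$ for every group of type $D_{4}$. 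Thus $\operatorname{Inndiag}(K)$ is a normal $r'$-subgroup of $\aut{K}$ and $\aut{K}/\operatorname{Inndiag}(K) \isomorphic \Phi_{K}\times\Gamma_{K}$ has cyclic Sylow $r$-subgroups; since the kernel is an $r'$-group, a Sylow $r$-subgroup of $\aut{K}$ maps isomorphically onto one of the quotient, and (a) follows. The same analysis shows that any subgroup of order $r$ of $\aut{K}$ is conjugate to $\listgen{\phi}$ for some field automorphism $\phi$ of order $r$, so throughout (b)--(f) we may assume $R = \listgen{\phi}$ and $q = q_{0}^{r}$.

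The third step reads (b)--(e) off the structure of $C = \cz{K}{\phi}$. By the Lang--Steinberg theory (see \cite{GLS}), $C^{(\infty)}$ is the subfield subgroup $\chevgroup{\epsilon}{q_{0}}$ of the same type as $K$, the quotient $C/C^{(\infty)}$ embeds in the diagonal automorphism group of $C^{(\infty)}$, and $C^{(\infty)}$ is simple except for finitely many small values of $q_{0}$; these exceptions are precisely what populate Tables~1 and~2, the solvable ones --- where $\chevgroup{\epsilon}{q_{0}}$ is degenerate, forcing $K$ into the family $\badfour$ --- going into Table~1, and the remaining ones (such as $K \isomorphic \spfour{2^{r}}$ or $\gtwo{2^{r}}$, where $\gfitt{C}$ is respectively $\alt{6}$ or $\uthree{3}$) into Table~2. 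This gives the unique minimal normal subgroup $N$ of $C$ (equal to $C^{(\infty)} = \chevgroup{\epsilon}{q_{0}}$ in the generic case) and the dichotomy between $\gfitt{C}$ simple and $C$ solvable in (b). For (c) and (d) I would use that a subgroup of $K$ containing the subfield subgroup $\chevgroup{\epsilon}{q_{0}}$ is either $K$ itself or a subfield subgroup $\chevgroup{\epsilon}{q_{0}^{s}}$ with $s \ifo r$; since $r$ is prime there is then no proper $R$-invariant overgroup of $C$ in the generic case, so $M = C$ there, and the maximal $RC$-invariant solvable subgroup is trivial unless $C$ itself is solvable, i.e.\ unless $K$ lies in the family $\badfour$. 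The two exceptions $K \isomorphic \ltwo{2^{r}}$ and $\sz{2^{r}}$ in (d) arise because for these the solvable group $C$ --- isomorphic to $\sym{3}$, respectively the Frobenius group of order $20$ --- is contained in a strictly larger $\phi$-invariant proper subgroup, namely the normaliser of a $\phi$-stable nonsplit torus, whereas for $\ltwo{3^{r}}$ and $\uthree{2^{r}}$ the subgroup $C$ is already maximal in $K$ under the congruences forced by $r$ being an odd prime different from $p$. Finally, (e) asserts that a nontrivial $r'$-subgroup $X$ of $\aut{K}$ cannot centralise $C$: such an $X$ would centralise $\chevgroup{\epsilon}{q_{0}}$, but absolute irreducibility of the natural module forces the inner automorphisms centralising $\chevgroup{\epsilon}{q_{0}}$ to be trivial, and an outer automorphism centralising it must be a field automorphism lying in $\listgen{\phi}$; as $X$ is an $r'$-group it follows that $X = 1$, the Table~1 configurations being checked directly.

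The fourth step is (f). Passing to the quasisimple cover $\widetilde{K}$, one has $r \nmid \card{\zz{\widetilde{K}}}$ (the Schur multiplier of a simple group of Lie type is, up to small exceptions, prime to every prime not dividing its order), so $r \nmid \card{\widetilde{K}}$, $\widetilde{R}$ acts coprimely on $\widetilde{K}$, and since $\widetilde{K}$ is perfect and $\widetilde{R} \not= 1$ on $\widetilde{K}$ we get $\widetilde{K} = [\widetilde{K},\widetilde{R}]$. For (i) it suffices, after extending scalars, to take $V$ irreducible; Clifford theory for $\widetilde{K} \normal \widetilde{R}\widetilde{K}$ either produces a nontrivial $\widetilde{R}$-orbit of homogeneous $\widetilde{K}$-components, whose diagonal already contributes a copy of $\pring{F}{\widetilde{R}}$ to $V_{\widetilde{R}}$, or reduces to the homogeneous case, where one picks an $\widetilde{R}$-invariant Sylow $q$-subgroup $Q$ of $\widetilde{K}$ with $q \not= p$ and $[Q,\widetilde{R}] \not= 1$ (such a $Q$ exists because $\widetilde{R}$ does not centralise $\widetilde{K}$) and applies Lemma~\ref{p:10}. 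Part (ii) then follows by applying (i) to each component $L$ of $\cz{\widetilde{K}}{\widetilde{R}}$: were $L$ trivial on $\cz{V}{\widetilde{R}}$, a cover of $L$ would have a faithful module of dimension below the Landazuri--Seitz bound. The main obstacle in all of this is not conceptual; it is the volume and delicacy of the low-rank, small-field casework underlying Tables~1 and~2, together with the representation-degree inequalities needed for (f). Every ingredient is, by now, a routine consequence of the classification, assembled from \cite{GLS}, the \textsc{Atlas}, and \cite{McB1, McB2}.
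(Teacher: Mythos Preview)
Your treatment of (a)--(e) is along the right lines and matches the paper's approach, which simply cites the relevant passages of \cite{GLS3} and \cite{BGL}; your more detailed sketch of the reduction to field automorphisms and the Lang--Steinberg description of $C$ is essentially what those references contain.

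The substantive work, however, is in (f), and here your proposal diverges from the paper and has gaps. For (f)(i) the paper does not use Clifford theory on the homogeneous $\widetilde{K}$-components. Instead it observes that, by Lemma~\ref{p:10}, it suffices to find an $\widetilde{R}$-invariant \emph{abelian} $p'$-subgroup of $\widetilde{K}$ on which $\widetilde{R}$ acts nontrivially; since the preimage in $\widetilde{K}$ of a cyclic subgroup of $K$ is abelian, this reduces to finding an $R$-invariant cyclic $p'$-subgroup of $K$ moved by $R$. A separate lemma (Lemma~\ref{k:2}, due to Lyons) shows that $K$ is generated by $R$-invariant subgroups of type $\ltwo{q^{r}}$, $\sltwo{q^{mr}}$ or $\sz{q^{r}}$, reducing to those groups, where one exhibits two $R$-invariant cyclic tori of coprime orders and picks the $p'$ one. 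Your approach of picking an $\widetilde{R}$-invariant Sylow $q$-subgroup $Q$ with $[Q,\widetilde{R}]\neq1$ runs into the abelian hypothesis of Lemma~\ref{p:10} when $q=2$ and $r$ is Fermat, and you give no argument that an \emph{odd} $q\neq p$ with this property exists.

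For (f)(ii) your Landazuri--Seitz idea does not work as stated: knowing that $L=\layerr{\cz{\widetilde{K}}{\widetilde{R}}}$ centralises $\cz{V}{\widetilde{R}}$ gives no upper bound on the dimension of any faithful $L$-module, since $V$ itself can be arbitrarily large. The paper's argument is entirely different. It first shows (using (f)(i) and irreducibility) that the kernel must be all of $\layerr{\cz{\widetilde{K}}{\widetilde{R}}}$, which maps onto $\cz{K}{R}'$. A second auxiliary lemma (Lemma~\ref{k:3}) then produces, for any prime $p$, an $R$-invariant dihedral subgroup $D=T\langle z\rangle\leq K$ of order $2t$ with $t\notin\{2,p\}$, $R$ nontrivial on $T$, and the involution $z$ lying in $\cz{K}{R}'$; this is again proved via the rank-one reduction of Lemma~\ref{k:2}, with extra case analysis for the groups in Table~2. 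Lifting $T$ and $z$ to $\widetilde{K}$ and applying the elementary module lemma~\ref{p:11} (with $\widetilde{S}=\langle\widetilde{z}\rangle$ in the role of $S$) then forces $[\widetilde{T}_{0},\widetilde{R}]=1$, contradicting $[T,R]\neq1$. The dihedral-subgroup construction and the invocation of Lemma~\ref{p:11} are the missing ideas in your sketch.
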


\begin{table}
    \[\begin{array}{c|cccc}
        K   &   \ltwo{2^{r}}    &   \ltwo{3^{r}}  &   \sz{2^{r}}  &   \uthree{2^{r}} \\ \hline
        C   &   \ltwo{2} \isomorphic 3:2       &   \ltwo{3} \isomorphic 2^{2}:3 %
            &   \sz{2} \isomorphic 5:4     &   \uthree{2} \isomorphic 3^{2}:Q_{8}    \\
        N   &   3               &   2^{2}         &     5         &     3^{2} \\
        \card{C:N}  & 2         &   3             &     4         &      8    \\
        S   &   (2^{r}+1):2     &   C             & (2^{r}+2^{\frac{1}{2}(r-1)}\epsilon+1):4 & C \\
        \out{K} & r             &   2 \times r    &     r         &     3 \times r
    \end{array}\]
    where $\epsilon = 1$ if $r \equiv \pm 1 \bmod 8$ and $\epsilon = -1$ if $r \equiv \pm 3 \bmod 8$.\\
    $K:H$ indicates a Frobenius group with kernel $K$ and complement $H$.\
    \medskip
    \caption{}
\end{table}

\begin{table}
    \[\begin{array}{c|cccc}
        K   &   \spfour{2^{r}}      &   \twistedgtwo{3^{r}}     &   \gtwo{2^{r}}   & \twistedffour{2^{r}} \\ \hline
        C   &   \spfour{2}          &   \twistedgtwo{3}         &   \gtwo{2}       & \twistedffour{2}     \\
        N   &   \spfour{2}' \isomorphic \alt{6} \isomorphic \ltwo{9} %
                    & \twistedgtwo{3}' \isomorphic \ltwo{8} & \gtwo{2}' \isomorphic \uthree{3} & \twistedffour{2}' \\
        \card{C:N}  &   2   &   3   &   2   &   2
    \end{array}\]
    \medskip
    \caption{}
\end{table}
\begin{proof}[Proof of Theorem~\ref{k:1}(a),\ldots,(e)]
    (a). This is \cite[Theorem~7.1.2, p.336]{GLS3}.

    (b). This is \cite[Theorem~2.2.7, p.38]{GLS3}.

    (c). This is \cite[Theorem~7.1.9, p.340]{GLS3}.

    (d). This is the main result of \cite{BGL}.

    (e). This is established in the third paragraph of the proof
    of \cite[Theorem~7.1.4, p.337]{GLS3}.
\end{proof}

\noindent The author is indebted to Richard Lyons for the proof of the
following lemma.

\begin{Lemma}\label{k:2}
    Let $R$ be a group of prime order $r$ that acts nontrivially
    and coprimely on the simple $K$-group $K$.
    Then there exists a prime power $q$ and $R$-invariant subgroups
    $L_{1},\ldots,L_{n}$ such that \[
        K = \listgen{L_{1},\ldots,L_{n}}
    \]
    and for each $i$,
    the action of $R$ on $L_{i}$ is nontrivial
    and $L_{i} \isomorphic \ltwo{q^{r}}, \sltwo{q^{mr}}, m = 1,2,3$ or $\sz{q^{r}}$.
\end{Lemma}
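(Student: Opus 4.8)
The plan is to reduce, by a normal-closure argument, to producing a single subgroup of the required kind, and then to find such a subgroup using the structure theory of finite groups of Lie type. We may assume $K$ is nonabelian. As a first step I would observe that $K$ must be of Lie type: since $K$ is nonabelian simple, $|K|$ is even, so $r$ is odd; and if $K$ were an alternating or a sporadic group, then $\out K$ would be a $2$-group, so $R$ --- having odd order --- would be induced by conjugation by an element of $K$, and as $R\ne1$ this would force $r\mid|K|$, contrary to coprimality. Hence $K$ is a simple group of Lie type in some characteristic $p$.

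Next I would dispose of the cases $K\isomorphic\ltwo{2^{r}}$ and $K\isomorphic\sz{2^{r}}$ directly, taking $n=1$ and $L_{1}=K$ (which is $R$-invariant, has $[K,R]\ne1$ since $K$ is simple and $R\ne1$, and is of the required shape with $q=2$). For the remaining $K$ I claim it suffices to produce one $R$-invariant subgroup $L_{0}\le K$ with $[L_{0},R]\ne1$ that is isomorphic to a group on the list. Granting such an $L_{0}$, put $C=\cz KR$ and $D=\listgen{L_{0}^{c}\mid c\in C}$. Since $c^{\rho}=c$ and $L_{0}^{\rho}=L_{0}$ for all $c\in C$, $\rho\in R$, every $L_{0}^{c}$ is $R$-invariant, so $D$ is $R$-invariant; moreover $C$ normalizes $D$, so $CD$ is an $R$-invariant subgroup of $K$ containing $C$. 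If $CD\ne K$, then $CD$ lies in the unique maximal $R$-invariant subgroup $M$ containing $C$ furnished by Theorem~\ref{k:1}(d); but $M=C$ forces $L_{0}\le C$ and hence $[L_{0},R]=1$, a contradiction, while $M\ne C$ is excluded by Theorem~\ref{k:1}(d) because $K\not\isomorphic\ltwo{2^{r}},\sz{2^{r}}$. Therefore $CD=K$, so $D\normal CD=K$, and since $1\ne L_{0}\le D$, simplicity of $K$ gives $D=K$. As $K$ is finite, some $L_{0}^{c_{1}},\dots,L_{0}^{c_{n}}$ generate $K$, and these serve as the required $L_{1},\dots,L_{n}$: each is $R$-invariant, isomorphic to $L_{0}$, and satisfies $[L_{0}^{c_{i}},R]=[L_{0},R]^{c_{i}}\ne1$.

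To produce $L_{0}$, I would first use the structure of $\aut K$ --- the field automorphisms forming a cyclic subgroup, and the diagonal and graph automorphisms involving only primes that divide $|K|$ --- together with $r$ odd and $r\nmid|K|$, to conclude that $R$ induces a nontrivial field automorphism of $K$; since the Sylow $r$-subgroups of $\aut K$ are cyclic by Theorem~\ref{k:1}(a), $R$ is then $\aut K$-conjugate to $\listgen{\phi}$ for a standard field automorphism $\phi$ of order $r$. Replacing $R$ by this conjugate, $K$ is a group of Lie type over $\gf{q^{r}}$ for some power $q$ of $p$, with $\phi$ acting on the field as $x\mapsto x^{q}$. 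For every type other than ${}^{2}\!A_{2}$ and ${}^{2}G_{2}$ I would take $L_{0}=\listgen{X_{\alpha},X_{-\alpha}}$ for a simple root $\alpha$: field automorphisms normalize root subgroups, so $L_{0}$ is $\phi$-invariant with $[L_{0},\phi]\ne1$, and according to the type and the relative root length $L_{0}$ is isomorphic to $\ltwo{q^{r}}$, $\sltwo{q^{r}}$, $\sltwo{q^{2r}}$, $\sltwo{q^{3r}}$, or $\sz{q^{r}}$ (the last exactly for type ${}^{2}B_{2}$, where $L_{0}=K$). For $\uthree{q^{r}}={}^{2}\!A_{2}(q^{r})$ I would take $L_{0}\isomorphic\sltwo{q^{r}}$ to be the layer of the stabilizer of a $\phi$-fixed nonsingular point, and for $\twistedgtwo{q^{r}}$ the layer $L_{0}\isomorphic\ltwo{q^{r}}$ of the centralizer of a $\phi$-fixed involution; in each case $\phi$ normalizes $L_{0}$ and acts nontrivially on it.

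The reduction in the second paragraph is purely formal, so I expect the main obstacle to be the third. The substance there is, first, extracting from the known structure of $\aut K$ that a coprime automorphism of odd prime order must be a field automorphism --- so that $r$ divides the degree of the defining field and $K$ is defined over $\gf{q^{r}}$ --- and, second, the type-by-type verification that the fundamental rank-$1$ subgroups have exactly the shapes $\ltwo{q^{r}}$, $\sltwo{q^{mr}}$ ($1\le m\le3$), $\sz{q^{r}}$, together with the separate treatment of the two rank-$1$ twisted families $\uthree{q^{r}}$ and $\twistedgtwo{q^{r}}$, where one must exhibit a $\phi$-stable subgroup of a listed shape on which $\phi$ acts nontrivially. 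None of this is deep, but it calls for a careful pass through all the Lie types.
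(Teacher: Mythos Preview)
Your proposal is correct and reaches the same endpoint as the paper, but the reduction step is genuinely different. The paper argues directly: after identifying $R$ with a standard field automorphism of $K=\chevgroup{d}{q^{r}}$, it notes that the (twisted) rank-one subgroups $\listgen{X_{\alpha},X_{-\alpha}}$ are $R$-invariant and generate $K$, so one may assume $K$ itself has rank one; it then handles $A_{1}(q^{mr})$ and $\sz{q^{r}}$ trivially and decomposes $\uthree{q^{r}}$ and $\twistedgtwo{q^{r}}$ explicitly (the former via two block-diagonal copies of $\operatorname{SU}_{2}(q^{r})$, the latter via the layers of involution centralizers, using a strongly-embedded argument for generation). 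Your route instead isolates a \emph{single} $R$-invariant $L_{0}$ of the required shape with $[L_{0},R]\ne1$ and then invokes Theorem~\ref{k:1}(d) to force $\listgen{L_{0}^{\cz{K}{R}}}=K$. This buys you a cleaner statement of what must be checked type by type (one subgroup rather than a generating family), and as a bonus all your $L_{i}$ are conjugate; the cost is an appeal to the Borel--Griess--Lyons uniqueness result behind Theorem~\ref{k:1}(d), whereas the paper's generation of $K$ by rank-one subgroups is an elementary fact about Chevalley generators. The type-by-type work in your third paragraph is essentially the same as the paper's, and your treatment of $\uthree{q^{r}}$ and $\twistedgtwo{q^{r}}$ is equivalent to the paper's.
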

\begin{proof}
    Since $R$ acts nontrivially and coprimely on $K$ it follows that $K \in \chev{p}$
    for some prime $p$ and that $R$ is generated by a field automorphism,
    by \cite[7.1.2]{GLS3}.
    Then $K = \chevgroup{d}{q^{r}}$ where $q = p^{k}$ for some $k$.
    Since the Sylow $r$-subgroups of $\aut{K}$ are cyclic,
    the image of $R$ in $\aut{K}$ is determined up to conjugacy.
    Then replacing $R$ by a conjugate if necessary,
    we may assume that $R$ has a generator $\rho$ which is a field automorphism
    in the sense of \cite[Sec.\ 10]{St} (cf. \cite[2.5.1]{GLS3}).
    That is $\rho$ transforms a set of Chevalley generators
    $x_{\alpha}(t)$ or $x_{\alpha}(t,u)$, etc.\
    by taking them to $x_{\alpha}(t^{\psi}), x_{\alpha}(t^{\psi},u^{\psi})$, etc.,
    where $\psi$ is an automorphism of $\overline{\gf{q}}$.
    Thus for each root $\alpha$,
    $R$ normalizes the (twisted) rank one group $\listgen{X_{\alpha}, X_{-\alpha}}$.
    Such rank one groups generate $K$ so we may assume that $K$ has rank one.
    If $K \isomorphic A_{1}(q^{mr})$ or $\sz{q^{r}}$
    there is nothing to prove.
    If $K \isomorphic \twistedgtwo{q^{r}}$ then $R$ centralizes some $S \in \syl{2}{K}$,
    so $R$ normalizes each $\cz{K}{t} \isomorphic \listgen{t} \times \ltwo{q^{r}}, t \in S\nonid$,
    and $K = \gen{\layerr{\cz{K}{t}}}{t \in S\nonid}$ since otherwise the right hand side
    would be strongly embedded in $K$.
    If $K \isomorphic \uthree{q^{r}}$,
    then we may take the sesquilinear form to have matrix the $3 \times 3$ identity matrix,
    and $\rho$ to be the automorphism $t \mapsto t^{q}$ on all matrix entries.
    Then $K = \listgen{K_{12}, K_{23}}$ where $K_{12}$ and $K_{23}$ are block-diagonal
    copies of $\operatorname{SU}_{2}(q^{r})$.
    As $K_{12}$ and $K_{23}$ are $\rho$-invariant,
    the proof is complete.
\end{proof}

\begin{proof}[Proof of Theorem~\ref{k:1}(f)(i)]
    Let $p = \fieldchar{F}$.
    By Lemma~\ref{p:10},
    it suffices to show that $\widetilde{K}$ possesses an $\widetilde{R}$-invariant
    abelian $p'$-subgroup on which $\widetilde{R}$ acts nontrivially.
    The inverse image in $\widetilde{K}$ of any cyclic subgroup of $K$ is abelian.
    Hence it suffices to show that $K$ possesses an $R$-invariant cyclic
    $p'$-subgroup on which $R$ acts nontrivially.

    By Lemma~\ref{k:2} we may suppose that $K = \ltwo{q^{r}}$ or $\sz{q^{r}}$
    for some prime power $q$.
    Suppose $K = \ltwo{q^{r}}$.
    Set $d = (2,q-1)$.
    Then $K$ possesses $R$-invariant cyclic subgroups of orders
    $(q^{r}-1)/d$ and $(q^{r}+1)/d$ on which $R$ acts nontrivially.
    These orders are coprime,
    so one will be coprime to $p$.
    Suppose $K = \sz{q^{r}}$.
    Then $q = 2^{n}$ for some $n$.
    By \cite{Suz},
    $K$ possesses $R$-invariant cyclic subgroups of orders
    $2^{nr}+2^{(nr+1)/2}+1$ and $2^{nr}-2^{(nr+1)/2}+1$
    on which $R$ acts nontrivially.
    Again, one of these numbers is coprime to $p$.
\end{proof}

\begin{Lemma}\label{k:3}
    Let $R$ be a group of prime order $r$ that acts nontrivially
    and coprimely on the simple $K$-group $K$.
    Let $p$ be a prime.
    Then there exists a prime $t \not\in \listset{2,p}$ and an
    $R$-invariant dihedral group $D \leq K$ of order $2t$
    such that $R$ is nontrivial on $\op{t}{D}$ and $\cz{K}{R}$
    contains an involution of $D$.
    If $K \not\isomorphic \ltwo{2^{r}}$ and $\sz{2^{r}}$ then $D$
    may be chosen such that $\cz{K}{R}'$ contains an involution of $D$.
\end{Lemma}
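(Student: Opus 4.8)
The plan is to reduce to the rank-one situation via Lemma~\ref{k:2} and then to exhibit the required dihedral group inside each of the groups $\ltwo{q^{r}}$, $\sltwo{q^{mr}}$ and $\sz{q^{r}}$ by hand. By Lemma~\ref{k:2} we may write $K = \listgen{L_{1},\ldots,L_{n}}$ with each $L_{i}$ an $R$-invariant rank-one group on which $R$ acts nontrivially, so it suffices to produce $D$ inside a single $L_{i}$; note that since $R$ is nontrivial on some $L_{i}$, any involution in $\cz{L_{i}}{R}$ lies in $\cz{K}{R}$, and if the involution lies in $L_{i}'$ it lies in $K'=K$ (a simple group equals its derived subgroup), which is not quite the stronger assertion — the stronger assertion wants the involution in $\cz{K}{R}'$, so I will need the involution to lie in $\cz{L_{i}}{R}'$ and use $\cz{L_{i}}{R} \leq \cz{K}{R}$. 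First I would handle the case $K = \ltwo{q^{r}}$: by Lemma~\ref{k:2} the field automorphism $R$ fixes a subfield copy $\ltwo{q} \leq \cz{K}{R}$ (for suitable normalisation), and the cyclic torus of $K$ of order $(q^{r}\pm1)/d$ is $R$-invariant; choosing the sign so that the order is coprime to $p$ — possible since the two orders are coprime — and then picking a prime $t \neq 2,p$ dividing it, one gets an $R$-invariant cyclic $t$-subgroup $T$ on which $R$ acts nontrivially (the field automorphism acts as $x \mapsto x^{q}$, which is nontrivial on a $t$-element since $t \nmid q-1$ forces the $t$-part to genuinely move), and the normaliser of a torus in $\ltwo{q^{r}}$ is dihedral, so an $R$-invariant involution inverting $T$ exists; the subfield subgroup $\ltwo{q}$ (when $q > 3$, or $\ltwo{2}$, $\ltwo{3}$ handled as small exceptions) or rather an involution in $\cz{K}{R}$ provides the reflection, and for the stronger statement one takes the involution inside $\ltwo{q}' = \ltwo{q}$ for $q \geq 4$.

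For $K = \sz{q^{r}}$ with $q = 2^{n}$: here the relevant $R$-invariant cyclic subgroups have orders $q^{2r} \pm q^{r}\sqrt{2q^{r}} + 1$ (the orders $2^{nr}\pm 2^{(nr+1)/2}+1$ appearing in the proof of Theorem~\ref{k:1}(f)(i)), and these are odd and pairwise coprime, so again one is coprime to $p$; such a cyclic subgroup sits in a dihedral normaliser inside $\sz{q^{r}}$ — in fact the normaliser of such a torus is $D$ of order $4 \cdot (\text{torus order})$ or a dihedral/Frobenius group with an inverting involution — and $\cz{K}{R}$ contains $\sz{q} \neq 1$ which supplies involutions. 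Since $\sz{2^{r}}$ is one of the allowed exceptions in the weaker conclusion, we need not worry about the stronger conclusion here. The case $K = \sltwo{q^{mr}}$, $m \in \{1,2,3\}$: this is a quasisimple group with centre of order $(2,q-1)$, and $R$ acts as a field automorphism; its image in $\ltwo{q^{mr}}$ has the dihedral torus normalisers as above, and their full preimages in $\sltwo{q^{mr}}$ still contain $R$-invariant cyclic subgroups with the right coprimality and an inverting involution, the centre contributing at worst a factor $2$ that is absorbed; alternatively one observes that $\sltwo{q^{mr}}$ contains a subfield $\sltwo{p^{r}}$ with plenty of dihedral subgroups — but really the cleanest route is: by Lemma~\ref{k:2} one of the $L_i$ is of type $\ltwo{q^r}$ or $\sz{q^r}$ (the $\sltwo{q^{mr}}$ case arising only when $K$ itself is $\ltwo{q^{mr}}$ presented via $\sltwo{}$, so one descends to the already-treated cases).

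Assembling: in each case set $D = \listgen{T, \iota}$ where $T$ is the chosen $R$-invariant cyclic $t$-subgroup and $\iota$ an $R$-invariant involution in $\cz{K}{R}$ inverting $T$; then $D$ is dihedral of order $2t$ (shrinking $T$ to order exactly $t$), $R$ is nontrivial on $\op{t}{D} = T$, and $\iota \in \cz{K}{R}$, with $\iota \in \cz{K}{R}'$ obtainable outside the two listed exceptions by placing $\iota$ in the derived subgroup of a subfield subgroup. \textbf{The main obstacle} will be arranging the involution $\iota$ to be simultaneously $R$-invariant, centralised by $R$, \emph{and} to invert (rather than centralise) the torus $T$ — i.e.\ controlling the $R$-action on the dihedral normaliser $N_{K}(T)$, which is itself $R$-invariant but whose reflection coset must be shown to contain an $R$-fixed point; this is a coprime-action fixed-point count (Theorem~\ref{p:4}(d) applied to the $2$-subgroup structure of $N_K(T)$, or a direct Lang–Steinberg/subfield argument), and separately, verifying the stronger conclusion requires knowing the subfield subgroup $\ltwo{q}$ with $q \geq 4$ already contains the inverting involution in its derived group — straightforward since $\ltwo{q}$ is then simple — with the genuine exceptions $\ltwo{2^{r}}, \sz{2^{r}}$ (and the harmless-to-exclude $\ltwo{3^r}$, $\uthree{2^r}$ absorbed into the $\ltwo{q^r}$/$\sz{q^r}$ reduction) being exactly where no such subfield subgroup of the right kind is available.
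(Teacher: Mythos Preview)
Your overall strategy---reduce via Lemma~\ref{k:2} to the rank-one groups, then exhibit the dihedral group using a torus and an inverting involution from the subfield subgroup---matches the paper's, and for the weaker conclusion it is essentially correct. (The paper is a bit cleaner on two points you flag as obstacles: it simply takes the torus of order $\delta(q^{r}-\epsilon)$ together with an inverting involution already lying in the subfield copy $\ltwo{q}\leq\cz{K}{R}$, and it factors $\delta(q^{r}-\epsilon)=\delta(q-\epsilon)\cdot\bigl((\epsilon q)^{r-1}+\cdots+1\bigr)$ so that the cofactor visibly has trivial $R$-fixed points---no normaliser fixed-point count is needed.)

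The genuine gap is in the stronger conclusion. You assert that the residual Table~1 and~2 cases are ``absorbed into the $\ltwo{q^{r}}/\sz{q^{r}}$ reduction,'' but this is precisely where the reduction fails. If $K\cong\uthree{2^{r}}$, $\spfour{2^{r}}$, $\gtwo{2^{r}}$ or $\twistedffour{2^{r}}$, then the rank-one subgroups $L_{i}$ produced by Lemma~\ref{k:2} are themselves of type $\ltwo{2^{r}}$ or $\sz{2^{r}}$, so $\cz{L_{i}}{R}\cong\ltwo{2}$ or $\sz{2}$ and $\cz{L_{i}}{R}'$ has \emph{odd} order: there is no involution in $\cz{L_{i}}{R}'$ to feed into the inclusion $\cz{L_{i}}{R}'\leq\cz{K}{R}'$. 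The paper therefore treats these groups by separate arguments: for $\uthree{2^{r}}$ and $\twistedgtwo{3^{r}}$ it observes directly that $\cz{K}{R}'$ contains every involution of $\cz{K}{R}$; for $\spfour{2^{r}}$ and $\gtwo{2^{r}}$ it passes to an $R$-invariant subgroup $H\cong\ltwo{2^{r}}\times\ltwo{2^{r}}$, builds $D_{1}\times D_{2}$ with each $D_{i}$ dihedral of order $2t$, and uses $\card{\cz{K}{R}:\cz{K}{R}'}=2$ to find an involution $u\in\cz{K}{R}'\cap(D_{1}\times D_{2})$ that inverts at least one $\op{t}{D_{i}}$; and $\twistedffour{2^{r}}$ is reduced to the $\spfour{2^{r}}$ case via an $R$-invariant $\spfour{2^{r}}$-subgroup. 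Your proposal needs this extra layer of casework. (A minor separate point: your ``$q\geq4$'' clause for $\ltwo{q^{r}}$ omits $q=3$, but there $\ltwo{3}\cong 2^{2}{:}3$ and the involution already lies in $\cz{K}{R}'$, so that case is easily patched.)
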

\begin{proof}
    We begin by considering the special cases
    $K \isomorphic \ltwo{q^{r}}$ or $\sz{q^{r}}$ for some prime power $q$.
    Suppose that $K \isomorphic \ltwo{q^{r}}$.
    Choose $\epsilon \in \listset{-1,1}$,
    set $\delta = 1$ if $q$ is even and $\delta = 1/2$ if $q$ is odd.
    Now $\cz{K}{R} \isomorphic \ltwo{q}$ and $K$ possesses an $R$-invariant
    cyclic subgroup $X$ with order $\delta(q^{r}-\epsilon)$ that is
    inverted by an involution $z \in \cz{K}{R}$ and satisfies
    $\card{\cz{X}{R}} = \delta(q-\epsilon)$.
    Now \[
        \delta(q^{r}-\epsilon) = %
        \delta(q-\epsilon)\left( (\epsilon q)^{r-1} + \cdots + 1 \right)
    \]
    and $X$ possesses a subgroup $Y$ of order $(\epsilon q)^{r-1} + \cdots + 1$.
    Then $Y$ is $R$-invariant, inverted by $z$,
    has odd order and $\cz{Y}{R} = 1$.
    The two choices for $Y$,
    depending on the choice of $\epsilon$,
    have coprime orders.
    Hence we may choose $\epsilon$ such that $p \not\in \primes{Y}$.
    Choose a prime $t \in \primes{Y}$ and let $T$ be the subgroup of $Y$
    with order $t$.
    Set $D = T\listgen{z}$.
    Recall that $\cz{K}{R} \isomorphic \ltwo{q}$.
    If $q>3$ then $\cz{K}{R}$ is simple,
    whence $z \in \cz{K}{R}'$.
    If $q=3$ then $\ltwo{q} \isomorphic 2^{2}:3$
    and again $z \in \cz{K}{R}'$.

    Suppose that $K \isomorphic \sz{q^{r}}$.
    Then $q = 2^{n}$ for some odd $n$.
    Again choose $\epsilon \in \listset{-1,1}$.
    Now $\cz{K}{R} \isomorphic \sz{q}$ and by \cite{Suz},
    $K$ contains an $R$-invariant cyclic Hall-subgroup $X$ of order
    $2^{nr}+\epsilon 2^{(nr+1)/2}+1$ that is inverted
    by an involution $z \in \cz{K}{R}$.
    Note that $X$ has odd order and is not centralized by $R$.
    Set $Y = [X,R] \not= 1$.
    Then $Y$ is inverted by $z$.
    As previously, we may choose $\epsilon$ such that $Y$ is a $p'$-group.
    Choose $t \in \primes{Y}$ and let $T$ be the subgroup of $Y$
    with order $t$.
    Set $D = T\listgen{z}$.
    If $q>2$ then $\cz{K}{R}$ is simple so $z \in \cz{K}{R}'$.

    We now consider the general case.
    Using Lemma~\ref{k:2} and what we have just done,
    there exists an $R$-invariant dihedral subgroup $D \leq K$
    with order $2t$ for some $t \not\in \listset{2,p}$,
    $R$ is nontrivial on $\op{t}{D}$ and $\cz{K}{R}$ contains an involution of $D$.
    It remains to prove the final assertion.
    If $\cz{K}{R}$ is simple then there is nothing further to prove.
    Hence we may assume that $K$ is one of the eight groups listed
    in Tables~1 and 2 of Theorem~\ref{k:1}.
    The cases $\ltwo{2^{r}}$ and $\sz{2^{r}}$ are excluded by hypothesis.
    The case $\ltwo{3^{r}}$ has been dealt with.
    If $K \isomorphic \uthree{2^{r}}$ then $\cz{K}{R} \isomorphic 3^{2}:Q_{8}$
    so $\cz{K}{R}'$ contains every involution of $\cz{K}{R}$.
    If $K \isomorphic \twistedgtwo{3^{r}}$ then $\cz{K}{R}'$ has odd index
    in $\cz{K}{R}$ so again $\cz{K}{R}'$ contains every involution of $\cz{K}{R}$.
    The remaining three cases require a little more work.

    Suppose $K \isomorphic \spfour{2^{r}}$ or $\gtwo{2^{r}}$.
    Then $K$ contains  an $R$-invariant subgroup
    $H \isomorphic \ltwo{2^{r}} \times \ltwo{2^{r}}$ with $R$
    acting nontrivially on each component.
    This is clear in the case $K \isomorphic \spfour{2^{r}}$
    and follows from \cite{C} in the case $K \isomorphic \gtwo{2^{r}}$.
    By what we have done previously,
    $H$ contains an $R$-invariant subgroup $D = D_{1} \times D_{2}$
    with each $D_{i}$ dihedral of order $2t$ for some prime $t \not\in \listset{2,p}$,
    each $D_{i}$ is $R$-invariant and $R$ acts nontrivially on $\op{t}{D_{i}}$.
    From Table~2 in Theorem~\ref{k:1} we have $\card{\cz{K}{R}:\cz{K}{R}'}=2$
    so $\cz{K}{R}'$ contains an involution $u \in D$.
    Choose $i$ such that $u$ inverts $\op{t}{D_{i}}$.
    Then $\op{t}{D_{i}}\listgen{u}$ is the desired dihedral subgroup.

    Suppose $K \isomorphic \twistedffour{2^{r}}$.
    By \cite{M}, $K$ contains an $R$-invariant subgroup
    $H \isomorphic \spfour{2^{r}}$ on which $R$ acts nontrivially.
    Apply the previously considered case.
\end{proof}

\begin{proof}[Proof of Theorem~\ref{k:1}(f)(ii)]
    Let $\widetilde{E} = \layerr{\cz{\widetilde{K}}{\widetilde{R}}}$,
    $\widetilde{X} = \kernelon{\widetilde{E}}{\cz{V}{\widetilde{R}}}$
    and let $E$ be the image of $\widetilde{E}$ in $\cz{K}{R}$.
    Since $\widetilde{K}/\zz{\widetilde{K}} = K$ we have $E = \layerr{\cz{K}{R}}$.

    Assume the result is false.
    Then $\widetilde{X} \not= 1$ whence $\widetilde{E} \not= 1, E \not= 1$ and
    Theorem~\ref{k:1} implies $K \not\isomorphic \ltwo{2^{r}}$ and $\sz{2^{r}}$.
    Also, $E$ is simple whence $\widetilde{E}$ is quasisimple
    and $\zz{\widetilde{E}} \leq \zz{\widetilde{K}}$.
    Since $\widetilde{X} \normal \widetilde{E}$ we have $\widetilde{X} \leq \zz{\widetilde{E}}$
    or $\widetilde{X} = \widetilde{E}$.
    Suppose that $\widetilde{X} \leq \zz{\widetilde{E}}$.
    By (f)(i) we have $0 \not= \cz{V}{\widetilde{R}} \leq \cz{V}{\widetilde{X}}$.
    Also $\cz{V}{\widetilde{X}}$ is a submodule because
    $\widetilde{X} \leq \zz{\widetilde{E}} \leq \zz{\widetilde{K}}$.
    This contradicts the irreducibility of $V$.
    We deduce that $\widetilde{X} = \widetilde{E}$.
    In particular, as $E = \cz{K}{R}'$ it follows that $\widetilde{X}$ maps onto $\cz{K}{R}'$.

    By Lemma~\ref{k:3} there exists a prime $t \not\in \listset{2, \fieldchar{F}}$
    and an $R$-invariant dihedral subgroup $D \leq K$ of order $2t$ such that
    $R$ is nontrivial on $\op{t}{D}$ and $\cz{K}{R}'$ contains an involution of $D$.
    Let $T = \op{t}{D}$ and choose $S \leq \cz{K}{R}' \cap D$ with order $2$.

    Let $\widetilde{S} \leq \widetilde{X}$ be a $2$-subgroup that maps onto $S$.
    Since $T$ is cyclic,
    the inverse image of $T$ in $\widetilde{K}$ is abelian.
    Let $\widetilde{T}$ be a Sylow $t$-subgroup of this inverse image.
    Then $\widetilde{T}$ is $\widetilde{R} \times \widetilde{S}$-invariant
    and $\widetilde{T}$ maps onto $T$.
    Let $\widetilde{T}_{0} = [\widetilde{T},\widetilde{S}]$.
    \CoprimeActionComm\ implies $\widetilde{T}_{0} = [\widetilde{T}_{0},\widetilde{S}]$.
    Note that $\widetilde{T}_{0}$ is $\widetilde{R}$-invariant since
    $[\widetilde{R},\widetilde{S}] = 1$.
    Now $T = [T,S]$ and $\widetilde{T}_{0}$ maps onto $T$ whence
    $[\widetilde{T}_{0},\widetilde{R}] \not= 1$ because $[T,R] \not= 1$.
    But $[\cz{V}{\widetilde{R}},\widetilde{S}] = 0$ so Lemma~\ref{p:11}
    implies $[\widetilde{T}_{0},\widetilde{R}] = 1$,
    a contradiction.
    The proof is complete.
\end{proof}

We close this section with some useful consequences of Theorem~\ref{k:4}.

\begin{Theorem}\label{k:4}
    Let $r$ be a prime and suppose the elementary abelian $r$-group $A$
    acts coprimely on the $K$-group $G$.
    \begin{enumerate}
        \item[(a)]  If $\cz{G}{A}$ is nilpotent or has odd order then $G$ is solvable.

        \item[(b)]  If $\cz{G}{A}$ is solvable then the noncyclic composition factors
                    of $G$ belong to $\badfour$.

        \item[(c)]  Let $K \in \comp{A}{G}$.
                    Then $\cz{G}{\cz{K}{A}} = \cz{G}{K}$.

        \item[(d)]  $\zz{\cz{G}{A}} \leq \sol{G}$.
    \end{enumerate}
\end{Theorem}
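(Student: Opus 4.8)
The plan is to prove the four parts in the order (a), (b), (c), (d), reducing everything to the case $A$ cyclic of order $r$ wherever possible, since an elementary abelian $A$ is generated by its subgroups of order $r$ and $\cz{G}{A} = \bigcap_{R} \cz{G}{R}$ over such $R$. For (a) and (b), I would argue by induction on $\card{G}$, passing to a minimal $A$-invariant nonsolvable subnormal subgroup (an $(A,\mathrm{sol})$-component) and then to a suitable quotient; the crux is to reduce to the situation where $G$ is $A$-quasisimple, hence a central product of quasisimple groups permuted transitively by $A$, and then further to a single simple $K$-group $K$ on which a group $R$ of order $r$ acts, either trivially or nontrivially. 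When $R$ is trivial on $K$, Lemma~\ref{p:7} (or \ref{p:6}) identifies $\cz{G}{A}$ with a section containing a copy of $K$, so $\cz{G}{A}$ cannot be nilpotent, cannot have odd order (simple groups have even order by Feit--Thompson), and cannot be solvable; this kills that case. When $R$ is nontrivial, Theorem~\ref{k:1}(b) describes $C = \cz{K}{R}$: either $\gfitt{C}$ is simple, or $C$ is solvable and $K$ is one of the four groups $\ltwo{2^r}, \ltwo{3^r}, \sz{2^r}, \uthree{2^r}$ of Table~1 — and of these, $\ltwo{3^r}$ and $\uthree{2^r}$ have $C$ of order divisible by $3$ but with $C$ not nilpotent (it is $2^2{:}3$ or $3^2{:}Q_8$), and $\ltwo{2^r}$, $\sz{2^r}$ have $C \cong 3{:}2$, $5{:}4$ respectively, which again are non-nilpotent of even order. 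So if $\cz{G}{A}$ is nilpotent or of odd order we are forced back to solvability, proving (a); if merely solvable, we land exactly in $\badfour$, proving (b). The main obstacle here is the bookkeeping in the inductive reduction: one must check that the nilpotent / odd-order / solvable hypothesis on $\cz{G}{A}$ is inherited by $\cz{H}{A}$ for the relevant subnormal subgroups and quotients $H$, which uses \CoprimeActionQuot\ and \CoprimeActionProd\ together with Lemma~\ref{p:6}.

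For (c), let $K \in \comp{A}{G}$, so $K$ is an $A$-quasisimple subnormal subgroup. The inclusion $\cz{G}{K} \leq \cz{G}{\cz{K}{A}}$ is trivial, so I must show the reverse: if $x \in G$ centralizes $\cz{K}{A}$ then $x$ centralizes $K$. Since $\cz{G}{K}$ is $A$-invariant and the question is really about the action of $\listgen{x, \text{its } A\text{-orbit}}$ on $K$, I would first reduce to $A$ cyclic of order $r$, then to $K$ itself being quasisimple (writing $K = K_1 * \cdots * K_n$ with $A$ transitive on the $K_i$ and using that centralizing $\cz{K}{A}$, which by Lemma~\ref{p:6} or \ref{p:7} projects onto $\cz{K_1}{B}$, forces control of each factor). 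With $K$ quasisimple and $K/\zz{K} \cong$ a simple $K$-group, set $\widetilde{X} = \cz{G}{\cz{K}{A}}$; modulo $\zz{K}$ one gets an $r'$-group acting on $K/\zz{K}$ and centralizing $\cz{K/\zz{K}}{R} \supseteq \cz{K}{R}\zz{K}/\zz{K}$. Now Theorem~\ref{k:1}(e) applies directly: an $R$-invariant $r'$-subgroup of $\aut{K/\zz{K}}$ centralizing $\cz{K/\zz{K}}{R}$ is trivial. Hence $\widetilde{X}$ acts trivially on $K/\zz{K}$, and then Lemma~\ref{p:5} upgrades this to trivial action on $K$, giving $\widetilde{X} \leq \cz{G}{K}$. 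The one subtlety is making the $r'$-condition on $\widetilde{X}$ modulo $\zz{K}$ legitimate — but $A$ acts coprimely on $G$, so $\card{G}$ is prime to $r$, and any subgroup of $G$ is an $r'$-group; together with coprimeness this also lets us invoke Theorem~\ref{k:1}(e) with $X$ the image of $\widetilde{X}$, provided $\widetilde{X}$ is taken $R$-invariant, which we may arrange since $\cz{G}{A}$ and hence $\cz{G}{\cz{K}{A}}$ is $A$-invariant.

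For (d), I want $\zz{\cz{G}{A}} \leq \sol{G}$. Suppose not; then passing to $G/\sol{G}$ — using \CoprimeActionQuot\ to see $\cz{G/\sol{G}}{A} = \cz{G}{A}\sol{G}/\sol{G}$, so $\zz{\cz{G}{A}}$ maps into $\zz{\cz{G/\sol{G}}{A}}$ up to the center of a quotient — we reduce to $\sol{G} = 1$, i.e. $\gfitt{G} = \layerr{G}$ is a product of components permuted by $A$. Take $z \in \zz{\cz{G}{A}}$; then $z$ centralizes $\cz{K}{A}$ for every $K \in \comp{A}{G}$ (since $\cz{K}{A} \leq \cz{G}{A}$), so by part (c) $z$ centralizes every $A$-component, hence $z \in \cz{G}{\layerr{G}} = \zz{\ff{G}} = \zz{\sol{G}} = 1$ once $\sol{G}=1$ (using that a group with trivial solvable radical is semisimple-plus-constrained, in fact $\gfitt{G}=\layerr{G}$ and $\cz{G}{\gfitt{G}}=\zz{\ff{G}}=1$). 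This forces $z = 1$, completing the proof. The delicate point is the reduction to $\sol{G}=1$: $\zz{\cz{G}{A}}$ need not map onto $\zz{\cz{\br{G}}{A}}$, so I would instead argue directly that any $z \in \zz{\cz{G}{A}}$ centralizes $\cz{K}{A}$ for each $A$-component $K$ of $G$, apply (c) to conclude $z$ centralizes $\layersol{G}$ (handling sol-components via the quasisimple-modulo-solvable structure and Lemma~\ref{p:5}), and then deduce $z \in \sol{G}$ from $\cz{G}{\layersol{G}}\sol{G}/\sol{G} \leq \cz{\br{G}}{\layerr{\br{G}}} = \zz{\ff{\br{G}}}$, i.e. $[z,\layersol{G}]\leq\sol{G}$ plus $z$ normalizing a constrained quotient forces $z\in\sol{G}$ — this is where Theorem~\ref{k:1}(e) and Lemma~\ref{p:12} do the real work.
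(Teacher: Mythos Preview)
Your arguments for (a), (b), and (d) are correct and essentially match the paper's. The paper streamlines (a) and (b): a minimal counterexample is necessarily $A$-simple (by \CoprimeActionQuot\ and induction), so $G = K_{1} \times\cdots\times K_{n}$ with the $K_{i}$ simple and permuted transitively by $A$; then Lemma~\ref{p:6} gives $\cz{G}{A} \isomorphic \cz{K_{1}}{B}$ with $B = \n{A}{K_{1}}$, and Theorem~\ref{k:1}(a),(b) finish. For (d) your first approach is exactly right and is what the paper does: set $\br{G} = G/\sol{G}$, use \CoprimeActionQuot\ so that $\br{\zz{\cz{G}{A}}}$ centralizes $\cz{\br{K}}{A}$ for each $\br{K} \in \comp{A}{\br{G}}$, apply (c) in $\br{G}$ to get $[\layerr{\br{G}},\br{\zz{\cz{G}{A}}}] = 1$, and conclude from $\cz{\br{G}}{\layerr{\br{G}}} = 1$. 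Your worry that $\zz{\cz{G}{A}}$ need not map \emph{onto} $\zz{\cz{\br{G}}{A}}$ is not an obstacle: it maps \emph{into}, which is all that is needed. Drop the detour via Lemma~\ref{p:12}.

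The genuine gap is in (c). Your proposed ``reduce to $A$ cyclic of order $r$'' does not work: since $\cz{K}{A} \leq \cz{K}{R}$ for each $R \leq A$ of order $r$, knowing $\cz{G}{\cz{K}{R}} = \cz{G}{K}$ for every such $R$ yields only the trivial inclusion $\cz{G}{K} \leq \cz{G}{\cz{K}{A}}$. There is no induction on $\card{A}$ available here. What the paper does instead is keep the full $A$ throughout. Set $Z = \cz{G}{\cz{K}{A}}$; first reduce (via \CoprimeActionQuot\ and Lemma~\ref{p:5}) to $\zz{\layerr{G}} = 1$, so $\layerr{G}$ is a direct product of its components. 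Now two steps you omit entirely. First, part~(a) gives $\cz{K}{A} \not= 1$, and since $Z$ fixes this nontrivial subgroup of $K$ pointwise, $Z$ must permute the simple factors $K_{1},\ldots,K_{n}$ of $K$ among themselves (an element of $Z$ cannot send some $K_{i}$ to a component outside $K$, else a nontrivial element of $\cz{K}{A}$ would acquire a nonzero projection there). Second, Lemma~\ref{p:8} --- applied with $\Omega = \listset{K_{1},\ldots,K_{n}}$, on which $A$ is transitive and on which the $A$-invariant $r'$-group $Z$ now acts --- forces $Z$ to normalize each $K_{i}$ individually. Only then does the projection via Lemma~\ref{p:6} make sense, giving $[\cz{K_{i}}{A_{i}},Z] = 1$ with $A_{i} = \n{A}{K_{i}}$; Theorem~\ref{k:1}(a) then says $A_{i}$ acts on $K_{i}$ through a group of order at most $r$, and Theorem~\ref{k:1}(e) gives $[K_{i},Z] = 1$. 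Your endgame using Theorem~\ref{k:1}(e) and Lemma~\ref{p:5} is right, but it is unsupported without these normalization steps, and the phrase ``reduce to $A$ cyclic'' should be replaced by ``the stabilizer $A_{i}$ acts on $K_{i}$ through a cyclic group by Theorem~\ref{k:1}(a)''.
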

\begin{proof}
    (a),(b). Using \CoprimeActionQuot\ it follows that a minimal counterexample is $A$-simple.
    Thus $G = K_{1} \times\cdots\times K_{n}$ where $K_{1}, \ldots, K_{n}$ are simple
    subgroups that are permuted transitively by $A$.
    Let $B = \n{A}{K_{1}}$.
    Lemma~\ref{p:6} implies that \[
        \cz{G}{A} \isomorphic \cz{K_{1}}{B}.
    \]
    Apply Theorem~\ref{k:1}.

    (c). Trivially $\cz{G}{K} \leq \cz{G}{\cz{K}{A}}$.
    Set $Z = \cz{G}{\cz{K}{A}}$.
    Using \CoprimeActionQuot\ and Lemma~\ref{p:5} we may suppose that $\zz{\layerr{G}} = 1$.
    Then $\layerr{G}$ is the direct product of the $A$-components of $G$
    and $\cz{G}{A}$ permutes these $A$-components by conjugation.
    By (a), $\cz{K}{A} \not= 1$ so as $[Z,\cz{K}{A}] = 1$ it follows that $Z$ normalizes $K$.

    We have $K = K_{1} \times\cdots\times K_{n}$ where $K_{1},\ldots,K_{n}$
    are simple subgroups that are permuted transitively by $A$.
    Lemma~\ref{p:8} implies that $Z$ normalizes each $K_{i}$.
    For each $i$ let $\pi_{i}:K\longrightarrow K_{i}$ be the projection map
    and set $A_{i} = \n{A}{K_{i}}$.
    Let $c \in \cz{K}{A}$.
    Then $c = (c\pi_{1})\cdots(c\pi_{n})$.
    Since $[c,Z] = 1$ and $Z$ normalizes each $K_{i}$ it follows that $[c\pi_{i},Z] = 1$.
    Lemma~\ref{p:6} implies $\cz{K}{A}\pi_{i} = \cz{K_{i}}{A_{i}}$ so
    $[\cz{K_{i}}{A_{i}},Z] = 1$ and then Theorem~\ref{k:1}(a),(e) imply $[K_{i},Z] = 1$.
    Then $[K,Z] = 1$.

    (d). Set $\br{G} = G/\sol{G}$.
    Then $\cz{\br{G}}{\layerr{\br{G}}} = 1$.
    \CoprimeActionQuot\ and (c) imply $[\layerr{\br{G}},\br{\zz{\cz{G}{A}}}] = 1$
    whence $\zz{\cz{G}{A}} \leq \sol{G}$.
\end{proof}
\section{Direct Products}\label{dp}
We establish some notation relating to direct products
and present a lemma of McBride \cite[Lemma~5.10]{McB2}.
Throughout this section we assume:

\begin{Hypothesis}\label{dp:1} \mbox{}
    \begin{itemize}
        \item   $G = K_{1} \times\cdots\times K_{n}$ with each $K_{i}$
                a nonabelian simple group.

        \item   For each $i$, $\pi_{i}$ is the projection $G \longrightarrow K_{i}$.
    \end{itemize}
\end{Hypothesis}
\noindent We remark that the subgroups $K_{i}$ are the components of $G$
and are uniquely determined,
as are the projection maps.

\begin{Definition}\label{dp:2}
    Let $H$ be a subgroup of $G$.
    \begin{itemize}
        \item   $H$ is \emph{diagonal} if for each $i$ the projection map
                $H \longrightarrow K_{i}$ is an isomorphism.

        \item   $H$ is \emph{overdiagonal} if for each $i$ the projection map
                $H \longrightarrow K_{i}$ is an epimorphism.

        \item   $H$ is \emph{underdiagonal} if for each $i$ the projection map
                $H \longrightarrow K_{i}$ is not an epimorphism.
    \end{itemize}
\end{Definition}

\begin{Lemma}\label{dp:3}
    Suppose $H$ is an overdiagonal subgroup of $G$.
    Then there exists a unique partition
    $\listset{ \mathcal L_{1},\ldots,\mathcal L_{m} }$ of $\listset{K_{1},\ldots,K_{n}}$
    such that \[
        H = ( H \cap \listgen{\mathcal L_{1}} ) \times\cdots\times ( H \cap \listgen{\mathcal L_{m}} )
    \]
    and $H \cap \listgen{\mathcal L_{i}}$ is a
    diagonal subgroup of $\listgen{\mathcal L_{i}}$ for each $i$.
\end{Lemma}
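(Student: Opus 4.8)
The plan is to classify the overdiagonal subgroup $H$ by its pairwise projections, using Goursat's lemma. For $i\neq j$ write $\pi_{ij}$ for the map $G\to K_i\times K_j$; since $H$ is overdiagonal, $\pi_{ij}(H)$ is a subdirect product of $K_i\times K_j$, so by Goursat's lemma together with the simplicity and nonabelian\-ness of $K_i$ and $K_j$ it is either all of $K_i\times K_j$ or the graph of an isomorphism $\sigma_{ij}\colon K_i\to K_j$ (equivalently, $\pi_j(h)=\sigma_{ij}(\pi_i(h))$ for all $h\in H$). Declare $i\sim j$ in the second case, and $i\sim i$. The essential point is that $\sim$ is an equivalence relation: if $i\sim j$ and $j\sim k$ then $\pi_k(h)=\sigma_{jk}\sigma_{ij}(\pi_i(h))$ for all $h\in H$, so $\pi_{ik}(H)$ is contained in the graph of $\sigma_{jk}\sigma_{ij}$ and $i\sim k$. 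Let $\mathcal L_1,\dots,\mathcal L_m$ be the equivalence classes, regarded as a partition of $\{K_1,\dots,K_n\}$, and let $\rho_a\colon G\to\langle\mathcal L_a\rangle$ be the projection.

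Next I would check that $\Delta_a:=\rho_a(H)$ is a diagonal subgroup of $\langle\mathcal L_a\rangle$. Surjectivity of $\Delta_a\to K_i$ for $K_i\in\mathcal L_a$ is immediate from overdiagonality; for injectivity, if $h\in H$ has $\pi_i(h)=1$ then for every $K_j\in\mathcal L_a$ we have $i\sim j$, so $\pi_j(h)=\sigma_{ij}(\pi_i(h))=1$, whence $\rho_a(h)=1$. In particular $|\Delta_a|=|K_i|$ for $K_i\in\mathcal L_a$. Since the $\langle\mathcal L_a\rangle$ pairwise commute and intersect trivially, $h\mapsto(\rho_1(h),\dots,\rho_m(h))$ realises $H$ as a subdirect product of $\Delta_1\times\dots\times\Delta_m$; moreover for $a\neq b$, fixing $K_i\in\mathcal L_a$ and $K_j\in\mathcal L_b$ and composing with the isomorphisms $\Delta_a\cong K_i$ and $\Delta_b\cong K_j$ identifies the image of $H$ in $\Delta_a\times\Delta_b$ with $\pi_{ij}(H)=K_i\times K_j$, so the projection $H\to\Delta_a\times\Delta_b$ is onto.

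The crux is the following self-contained assertion: \emph{a subdirect product $H$ of a direct product $\Delta_1\times\dots\times\Delta_m$ of nonabelian simple groups which projects onto $\Delta_a\times\Delta_b$ for every $a\neq b$ equals $\Delta_1\times\dots\times\Delta_m$}. I would prove this by induction on $m$, the cases $m\leq 2$ being immediate. For $m\geq 3$, the inductive hypothesis gives that the projection $\theta$ of $H$ onto the first $m-1$ coordinates has image $\Delta_1\times\dots\times\Delta_{m-1}$, so $H$ is a subdirect product of $(\Delta_1\times\dots\times\Delta_{m-1})\times\Delta_m$; by Goursat's lemma this corresponds to an isomorphism between $\Delta_m/N'$ and $(\Delta_1\times\dots\times\Delta_{m-1})/N$ for normal subgroups $N'\unlhd\Delta_m$ and $N\unlhd\Delta_1\times\dots\times\Delta_{m-1}$. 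Since $\Delta_m$ is simple, either $N'=\Delta_m$, whence $N=\Delta_1\times\dots\times\Delta_{m-1}$ and $H=\Delta_1\times\dots\times\Delta_m$ as wanted, or $N'=1$, so $N$ is a maximal normal subgroup of $\Delta_1\times\dots\times\Delta_{m-1}$; but every normal subgroup of a finite direct product of nonabelian simple groups is a sub-product, so $N=\prod_{i\neq s}\Delta_i$ for some $s\leq m-1$, and then $H$ projects onto $\Delta_s\times\Delta_m$ as the graph of an isomorphism, contradicting the hypothesis. Granting this, $H=\Delta_1\times\dots\times\Delta_m$, hence $H\cap\langle\mathcal L_a\rangle=\Delta_a$ is diagonal in $\langle\mathcal L_a\rangle$, which gives the asserted decomposition.

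For uniqueness, suppose $H=\prod_a(H\cap\langle\mathcal M_a\rangle)$ with each factor diagonal in $\langle\mathcal M_a\rangle$ for some partition $\{\mathcal M_a\}$. If $K_i,K_j$ lie in the same $\mathcal M_a$ then the factors indexed by $b\neq a$ have trivial $K_i$- and $K_j$-components, so $\pi_{ij}(H)=\pi_{ij}(H\cap\langle\mathcal M_a\rangle)$ is the graph of an isomorphism, a proper subgroup; if $K_i,K_j$ lie in different $\mathcal M_a$ then their components can be varied independently, so $\pi_{ij}(H)=K_i\times K_j$. Thus membership in a common block of $\{\mathcal M_a\}$ is detected by $H$ itself, forcing $\{\mathcal M_a\}=\{\mathcal L_a\}$. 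I expect the main obstacle to be the self-contained assertion above --- equivalently, the fact that a diagonal subgroup of $\Delta\times\Delta$ is never normal --- and this is exactly the place where nonabelian\-ness is indispensable: for abelian simple groups the lemma fails, e.g.\ $\{(a,b,c)\in(\mathbb Z/2\mathbb Z)^3 : a+b+c=0\}$ admits no decomposition of the stated form. Everything else is routine bookkeeping with Goursat's lemma.
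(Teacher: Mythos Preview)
Your proof is correct but takes a genuinely different route from the paper's. The paper argues greedily: choose $\mathcal L_{1}$ \emph{minimal} subject to $H\cap\langle\mathcal L_{1}\rangle\neq 1$, observe that minimality forces $H_{1}:=H\cap\langle\mathcal L_{1}\rangle$ to inject into each $K_{i}\in\mathcal L_{1}$ while normality of $H_{1}\pi_{i}$ in $H\pi_{i}=K_{i}$ forces surjectivity, so $H_{1}$ is diagonal; then $H=H_{1}\times(H\cap\ker\pi_{i})=H_{1}\times C_{H}(H_{1})$ splits off $H_{1}$ as a direct factor and one inducts on the complement. Uniqueness is dispatched in one line by noting that the $H\cap\langle\mathcal L_{i}\rangle$ are precisely the components of $H$. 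Your approach instead constructs the partition \emph{a priori} from the pairwise projections via Goursat, then reduces existence to your ``crux'' lemma that a subdirect product of nonabelian simple groups which is pairwise full is already the full direct product. The paper's argument is shorter and avoids both Goursat and the crux lemma entirely; on the other hand, your route makes the partition intrinsic from the outset (as the equivalence classes of $i\sim j\Leftrightarrow\pi_{ij}(H)\neq K_{i}\times K_{j}$), so uniqueness is transparent without appealing to the component structure of $H$. Both arguments use nonabelianness in exactly one place: the paper when identifying $H\cap\ker\pi_{i}$ with $C_{H}(H_{1})$, you in the crux lemma when invoking that normal subgroups of $\Delta_{1}\times\cdots\times\Delta_{m-1}$ are sub-products.
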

\begin{proof}
    Choose $\mathcal L_{1} \subseteq \listset{K_{1},\ldots,K_{n}}$ minimal subject
    to $H \cap \listgen{\mathcal L_{1}} \not= 1$.
    Set $H_{1} = H \cap \listgen{\mathcal L_{1}} \normal H$.
    Choose $K_{i} \in \mathcal L_{1}$.
    The minimal choice of $\mathcal L_{1}$ implies $H_{1} \cap \kernel{\pi_{i}} = 1$.
    Thus $1 \not= H_{1}\pi_{i} \normal H\pi_{i} = K_{i}$ so the simplicity of $K_{i}$
    forces $H_{1}\pi_{i} = H\pi_{i} = K_{i}$.
    Then $H_{1}$ is diagonal in $\listgen{\mathcal L_{1}}$.
    Also $H = H_{1}(H \cap \kernel{\pi_{i}})$ so as
    $H_{1} \cap \kernel{\pi_{i}} = 1$ we obtain
    \[
        H = H_{1} \times (H \cap \kernel{\pi_{i}}).
    \]
    As $H_{1} \isomorphic K_{i}$ we see that $H_{1}$ is simple
    and then that $H \cap \kernel{\pi_{i}} = \cz{H}{H_{1}}$.
    Set \[
        G^{*} = \prod_{K_{j}\not\in\mathcal L_{1}} K_{j} %
                = \bigcap_{K_{i}\in\mathcal L_{1}} \kernel{\pi_{i}}.
    \]
    Then $H = H_{1} \times (H \cap G^{*})$.
    Now $H_{1}$ projects trivially into the direct factors of $G^{*}$
    so as $H$ is overdiagonal in $G$ it follows that $H \cap G^{*}$
    is overdiagonal in $G^{*}$.
    Induction yields $\mathcal L_{2},\ldots,\mathcal L_{m}$.

    Now $H = (H \cap \listgen{\mathcal L_{1}}) \times\cdots\times %
    (H \cap \listgen{\mathcal L_{m}})$ so each $H \cap \listgen{\mathcal L_{i}}$
    is a component of $H$.
    The components of a group are uniquely determined so the
    uniqueness of $\listset{\mathcal L_{1},\ldots,\mathcal L_{m}}$ follows.
\end{proof}

\begin{Lemma}\label{dp:4}
    Let $H$ be an overdiagonal subgroup of $G$.
    Then $\n{G}{H} = H$.
\end{Lemma}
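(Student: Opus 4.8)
The plan is to first treat the case that $H$ is \emph{diagonal} in $G$, and then reduce the general overdiagonal case to it by means of Lemma~\ref{dp:3}.

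For the diagonal case there are isomorphisms $\alpha_i\colon K_1\longrightarrow K_i$, with $\alpha_1 = \mathrm{id}$, such that $H = \set{(k\alpha_1,\ldots,k\alpha_n)}{k\in K_1}$; in particular $h\pi_i = (h\pi_1)\alpha_i$ for every $h\in H$. I would take $g = (g_1,\ldots,g_n)\in\n{G}{H}$ and $h\in H$. Since $h^g\in H$ we have $(h^g)\pi_i = ((h^g)\pi_1)\alpha_i$, and evaluating each side directly from $g$, together with the fact that $\alpha_i$ is a homomorphism, gives
\[
    g_i^{-1}(h\pi_i)g_i = \bigl(g_1^{-1}(h\pi_1)g_1\bigr)\alpha_i = (g_1\alpha_i)^{-1}(h\pi_i)(g_1\alpha_i).
\]
As $h$ runs over $H$, $h\pi_i$ runs over all of $K_i$, so $g_i(g_1\alpha_i)^{-1}\in\zz{K_i}=1$; hence $g_i = g_1\alpha_i$ for every $i$ and $g\in H$. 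Thus $\n{G}{H}=H$ whenever $H$ is diagonal.

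For the general case I would invoke Lemma~\ref{dp:3} to obtain the partition $\listset{\mathcal L_1,\ldots,\mathcal L_m}$, and set $G_j = \listgen{\mathcal L_j}$ and $H_j = H\cap G_j$, so that $G = G_1\times\cdots\times G_m$, $H = H_1\times\cdots\times H_m$, and each $H_j$ is diagonal in $G_j$. As noted in the proof of Lemma~\ref{dp:3}, the $H_j$ are precisely the components of $H$; hence any $g\in\n{G}{H}$, inducing an automorphism of $H$, permutes $\listset{H_1,\ldots,H_m}$ by conjugation. Writing $g = g_1\cdots g_m$ with $g_j\in G_j$, the factors $g_{j'}$ with $j'\neq j$ centralize $H_j\leq G_j$, so $H_j^{\,g} = H_j^{\,g_j}\leq G_j$. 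But $H_{j'}\cap G_j\leq G_{j'}\cap G_j = 1$ for $j'\neq j$, so $H_j$ is the only component of $H$ contained in $G_j$; therefore $H_j^{\,g} = H_j$, that is, $g_j\in\n{G_j}{H_j} = H_j$ by the diagonal case. Hence $g\in H_1\times\cdots\times H_m = H$, and $\n{G}{H} = H$ as claimed.

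I do not anticipate a serious obstacle. The one point that needs care is ruling out a nontrivial permutation of the factors $H_j$ by a normalizing element; this comes down to the elementary observation that distinct direct factors $G_j$ of $G$ intersect trivially, so no component of $H$ besides $H_j$ can sit inside $G_j$. It is also worth recording that conjugation by $g$ genuinely permutes the components of $H$ — automatic, since $g$ normalizes $H$ and the set of components of a finite group is invariant under all of its automorphisms.
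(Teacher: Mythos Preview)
Your proof is correct, and the overall architecture---reduce to the diagonal case via Lemma~\ref{dp:3}, then settle that case---matches the paper. The execution of the diagonal case, however, is different. You parametrize $H$ explicitly by isomorphisms $\alpha_i$ and compute that any normalizing element must have coordinates $g_i = g_1\alpha_i$. The paper instead argues structurally: since $H\pi_1 = K_1$, one has $\n{G}{H} = HN$ with $N = \n{G}{H}\cap\ker\pi_1$; then $[H,N]\leq H\cap\ker\pi_1 = 1$, so $N$ centralizes each $H\pi_i = K_i$ and is forced to be trivial. The paper's argument is shorter and avoids coordinates; yours is more concrete and makes the ``$g\in H$'' conclusion visible element by element.

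For the reduction step, your argument via permutation of the components $H_j$ is fine, but you are working harder than necessary: each $G_j = \listgen{\mathcal L_j}$ is a product of components of $G$ and hence normal in $G$, so any $g\in\n{G}{H}$ automatically normalizes $H\cap G_j = H_j$, with no need to first rule out a nontrivial permutation. The paper simply says ``we may assume $H$ is diagonal'' and leaves this to the reader.
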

\begin{proof}
    By the previous lemma we may assume that $H$ is diagonal.
    Since $H\pi_{1} = K_{1}$ we have \[
        \n{G}{H} = HN
    \]
    where $N = \n{G}{H} \cap \kernel{\pi_{1}}$.
    Now $[H,N] \leq H \cap \kernel{\pi_{1}} = 1$.
    For any $i$ we have $1 = [H\pi_{i},N\pi_{i}] = [K_{i},N\pi_{i}]$
    and so $N\pi_{i} \leq \zz{K_{i}} = 1$.
    This forces $N = 1$ and completes the proof.
\end{proof}

\section{$A$-quasisimple groups}\label{aqs}
Throughout this section we assume:

\begin{Hypothesis}\label{aqs:1} \mbox{}
    \begin{itemize}
        \item   $r$ is a prime and $A$ is an elementary abelian $r$-group.
        \item   $A$ acts coprimely on the $K$-group $K$.
        \item   $K$ is $A$-quasisimple.
    \end{itemize}
\end{Hypothesis}

\noindent We will establish a number of basic results on the
subgroup structure of $K$.
A central theme is the study of the $A\cz{K}{A}$-invariant subgroups of $K$.
Of course the subgroups $\cz{K}{B}$ for $B \leq A$ are examples.
It will develop that these comprise an almost complete list.
The results of this section may also be viewed as an extension of
Theorem~\ref{k:1} from simple groups to $A$-quasisimple groups.
A similar theory is also developed by McBride \cite{McB1,McB2}
but cast in a different language.

Let $K_{1},\ldots,K_{n}$ be the components of $K$.
Then \[
    K = K_{1} *\cdots * K_{n}
\]
and $A$ acts transitively on $\listset{K_{1},\ldots,K_{n}}$.
In particular, $K$ has a unique nonsolvable composition factor.

\begin{Definition}\label{aqs:2}
    The \emph{type} of $K$ is the isomorphism type of the
    unique nonsolvable composition factor of $K$.
\end{Definition}

\noindent Let $\br{K} = K/\zz{K}$,
so $\br{K}$ is $A$-simple and \[
    \br{K} = \br{K_{1}} \times\cdots\times \br{K_{n}}
\]
with each $\br{K_{i}}$ being simple.

\begin{Definition}\label{aqs:3}
    Let $H$ be an $A$-invariant subgroup of $K$.
    Then $H$ is \emph{underdiagonal, diagonal} or \emph{overdiagonal}
    in $K$ depending on whether $\br{H}$ has the respective property in $\br{K}$.
\end{Definition}

\noindent Note that since $H$ is $A$-invariant and $A$ is transitive
on $\listset{K_{1},\ldots,K_{n}}$ it follows that $H$ is either
underdiagonal or overdiagonal.

We fix the notation \[
    A_{\infty} = \kernel{(A \longrightarrow \sym{K_{1},\ldots,K_{n}})}.
\]
Since $A$ is abelian and transitive on $\listset{K_{1},\ldots,K_{n}}$
it follows that $A_{\infty} = \n{A}{K_{i}}$ for each $i$
and that the action of $A/A_{\infty}$ on $\listset{K_{1},\ldots,K_{n}}$ is regular.

\begin{Lemma}\label{aqs:4}
    $A_{\infty}/\cz{A}{K}$ acts faithfully on each $K_{i}$ and
    $\card{A_{\infty}/\cz{A}{K}} = 1$ or $r$.
\end{Lemma}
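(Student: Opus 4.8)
The plan is to first identify, for each $i$, the kernel of the action of $A_{\infty}$ on the component $K_{i}$, then transfer the question to the simple quotient $\br{K_{i}} = K_{i}/\zz{K_{i}}$, and finally appeal to Theorem~\ref{k:1}(a). We may assume $A \neq 1$, so that $r$ divides $\card{A}$ and hence, by coprimality, $r$ does not divide $\card{K}$.

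First I would show that $\cz{A}{K}$ is precisely the kernel of the action of $A_{\infty}$ on $K_{i}$, for every $i$; equivalently, $C_{A_{\infty}}(K_{i}) = \cz{A}{K}$. The inclusion $\cz{A}{K} \leq C_{A_{\infty}}(K_{i})$ is immediate. For the reverse inclusion, suppose $b \in A_{\infty}$ centralizes $K_{i}$. Given any $j$, transitivity of $A$ on $\listset{K_{1},\ldots,K_{n}}$ yields $a \in A$ with $K_{i}^{a} = K_{j}$; since $A$ is abelian we have $b^{a} = b$, and therefore $[b, K_{j}] = [b^{a}, K_{i}^{a}] = [b, K_{i}]^{a} = 1$. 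As $j$ was arbitrary and $K = K_{1} \cdots K_{n}$, this gives $b \in \cz{A}{K}$. In particular $A_{\infty}/\cz{A}{K}$ acts faithfully on each $K_{i}$.

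Next I would pass to $\br{K_{i}}$, which is simple. Let $X/\cz{A}{K}$ be the kernel of the induced action of $A_{\infty}/\cz{A}{K}$ on $\br{K_{i}}$. Then $X$ acts trivially on $K_{i}/\zz{K_{i}}$ and $K_{i}$ is perfect, so Lemma~\ref{p:5} forces $X$ to act trivially on $K_{i}$; by the previous paragraph, $X = \cz{A}{K}$. Hence $A_{\infty}/\cz{A}{K}$ embeds in $\aut{\br{K_{i}}}$. Since $\card{\br{K_{i}}}$ divides $\card{K}$, the prime $r$ does not divide $\card{\br{K_{i}}}$, so by Theorem~\ref{k:1}(a) the Sylow $r$-subgroups of $\aut{\br{K_{i}}}$ are cyclic. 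Therefore the $r$-group $A_{\infty}/\cz{A}{K}$ is cyclic; being also a quotient of the elementary abelian group $A$, it is an elementary abelian $r$-group, and a group that is both cyclic and an elementary abelian $r$-group has order $1$ or $r$.

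I expect the only point requiring any care to be the first step, where one must use that $A$ is abelian to deduce $b^{a} = b$ — this is what makes the conjugate-commutator identity $[b, K_{i}^{a}] = [b, K_{i}]^{a}$ available, after which transitivity finishes the job. The remaining steps are a routine combination of Lemma~\ref{p:5}, Theorem~\ref{k:1}(a), and the elementary abelian hypothesis on $A$.
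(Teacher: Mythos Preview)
Your proof is correct and follows essentially the same approach as the paper: use abelianness and transitivity of $A$ to identify $\cz{A_{\infty}}{K_{i}}$ with $\cz{A}{K}$, then combine Lemma~\ref{p:5} with Theorem~\ref{k:1}(a) to bound $\card{A_{\infty}/\cz{A}{K}}$. The paper compresses the second step into the single remark that the Sylow $r$-subgroups of $\aut{K_{i}}$ are cyclic, whereas you pass explicitly to $\br{K_{i}}$ first; the content is the same.
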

\begin{proof}
    Since $A$ is abelian and transitive on $\listset{K_{1},\ldots,K_{n}}$
    it follows that $\cz{A_{\infty}}{K_{i}} = \cz{A}{K_{1}*\cdots *K_{n}} = \cz{A}{K}$.
    Theorem~\ref{k:1} and Lemma~\ref{p:5} imply that the Sylow $r$-subgroups
    of $\aut{K_{i}}$ are cyclic and the result follows.
\end{proof}

Next we describe the structure of the subgroups $\cz{K}{B}$ for $B \leq A$.

\begin{Lemma}\label{aqs:5}
    Let $B \leq A$.
    \begin{enumerate}
        \item[(a)]  Suppose $B \cap A_{\infty} \leq \cz{A}{K}$.
                    Then there exists $Z \leq \zz{K_{1}} \cap \zz{K_{2}\cdots K_{n}}$
                    such that $\cz{K}{B}$ is isomorphic to the central product of
                    $\card{A:BA_{\infty}}$ copies of $K_{1}/Z$
                    that are permuted transitively by $A$.
                    In particular, $\cz{K}{B}$ is overdiagonal and $A$-quasisimple
                    with the same type as $K$.
                    If $K$ is $A$-simple then so is $\cz{K}{B}$.

        \item[(b)]  Suppose $B \cap A_{\infty} \not\leq \cz{A}{K}$.
                    Then there exists $Z \leq \zz{K_{1}} \cap \zz{K_{2}\cdots K_{n}}$
                    such that $\cz{K}{B}$ is isomorphic to the central product of
                    $\card{A:BA_{\infty}}$ copies of $\cz{K_{1}}{A_{\infty}}/Z$
                    that are permuted transitively by $A$.
                    In particular, $\cz{K}{B}$ is underdiagonal.
                    Either $\cz{K}{B}$ is solvable or $\gfitt{\cz{K}{B}}$ is $A$-quasisimple.
                    If $K$ is $A$-simple then either $\cz{K}{B}$ is solvable
                    or $\gfitt{\cz{K}{B}}$ is $A$-simple.

        \item[(c)]  If $B^{*} \leq A$ and $\cz{K}{B^{*}} = \cz{K}{B}$
                    then $B^{*}\cz{A}{K} = B\cz{A}{K}$.

        \item[(d)]  $\cz{A}{\cz{K}{B}} = B\cz{A}{K}$.
    \end{enumerate}
\end{Lemma}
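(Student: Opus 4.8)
The plan is to reduce, using the $B$-orbit structure on the components, to Lemma~\ref{p:7} for (a) and (b), and then to deduce (c) and (d) from a single rigidity statement. \emph{Common set-up:} since $A$ is abelian, $B\normal A$, so $A$ permutes the $B$-orbits on $\listset{K_{1},\ldots,K_{n}}$ transitively with kernel $BA_{\infty}$; thus there are $\card{A:BA_{\infty}}$ such orbits $O_{1},\ldots,O_{m}$, each of size $\card{B:B\cap A_{\infty}}$. Put $L_{j}=\listgen{O_{j}}$, so $K=L_{1}*\cdots*L_{m}$ with every $L_{j}$ invariant under $BA_{\infty}$; iterating \CoprimeActionProd\ gives $\cz{K}{B}=\cz{L_{1}}{B}*\cdots*\cz{L_{m}}{B}$, and $A$ permutes $\listset{\cz{L_{1}}{B},\ldots,\cz{L_{m}}{B}}$ transitively (so these subgroups are pairwise isomorphic). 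Everything thus reduces to $\cz{L_{1}}{B}$, where after re-indexing $L_{1}=K_{1}*\cdots*K_{s}$, $O_{1}=\listset{K_{1},\ldots,K_{s}}$; here $\stab{B}{K_{1}}=B\cap A_{\infty}$, so $B/(B\cap A_{\infty})$ permutes $\listset{K_{1},\ldots,K_{s}}$ regularly.

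For (a), $B\cap A_{\infty}\leq\cz{A}{K}$ acts trivially on $L_{1}$, so $B$ acts on $L_{1}$ through $B/(B\cap A_{\infty})$, which permutes the $K_{i}$ regularly; Lemma~\ref{p:7} gives $\cz{L_{1}}{B}\isomorphic K_{1}/Z$ with $Z\leq\zz{K_{1}}\cap\zz{K_{2}*\cdots*K_{s}}\leq\zz{K_{1}}\cap\zz{K_{2}\cdots K_{n}}$. Reassembling, $\cz{K}{B}$ is the central product of the $m=\card{A:BA_{\infty}}$ copies $\cz{L_{j}}{B}\isomorphic K_{1}/Z$ permuted transitively by $A$; as $K_{1}/Z$ is quasisimple, $\cz{K}{B}$ is $A$-quasisimple of the same type as $K$. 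The map $\tau$ of Lemma~\ref{p:7} shows $\cz{L_{j}}{B}$ projects onto each component in $O_{j}$, so $\cz{K}{B}$, hence its image in $\br{K}$, projects onto every component: $\cz{K}{B}$ is overdiagonal. If $K$ is $A$-simple then $\zz{K}=1$, so $Z=1$ and $\cz{K}{B}$ is a direct product of simple groups transitively permuted by $A$, hence $A$-simple.

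For (b), $B\cap A_{\infty}\not\leq\cz{A}{K}$, so by Lemma~\ref{aqs:4} $B\cap A_{\infty}$ maps onto $A_{\infty}/\cz{A}{K}$ of order $r$; hence $A_{\infty}\leq B\cz{A}{K}$ and $\cz{K_{i}}{B\cap A_{\infty}}=\cz{K_{i}}{A_{\infty}}$ for all $i$. A short projection computation gives $\cz{L_{1}}{B}=\cz{L_{1}}{BA_{\infty}}=\cz{\cz{L_{1}}{A_{\infty}}}{B}$, and by \CoprimeActionProd\ $\cz{L_{1}}{A_{\infty}}=\cz{K_{1}}{A_{\infty}}*\cdots*\cz{K_{s}}{A_{\infty}}$, on which $B/(B\cap A_{\infty})$ acts permuting the $s$ factors regularly; Lemma~\ref{p:7} then gives $\cz{L_{1}}{B}\isomorphic\cz{K_{1}}{A_{\infty}}/Z$ with $Z\leq\zz{K_{1}}\cap\zz{K_{2}\cdots K_{n}}$. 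The central-product description follows as before, and since $\cz{K}{B}$ projects onto $\cz{K_{i}}{A_{\infty}}\neq K_{i}$ it is underdiagonal. For the dichotomy, pass to $\br{K}=K/\zz{K}$: by \CoprimeActionQuot, $\br{\cz{K}{B}}=\cz{\br{K}}{B}$, and the direct-product form of the above (with Lemma~\ref{p:6} in place of Lemma~\ref{p:7}) shows $\cz{\br{K}}{B}$ is a direct product of $m$ copies of $\cz{\br{K_{1}}}{R}$ transitively permuted by $A$, where $R\leq\aut{\br{K_{1}}}$ has order $r$. By Theorem~\ref{k:1}(b), either $\cz{\br{K_{1}}}{R}$ is solvable, whence so is $\cz{\br{K}}{B}$ and then $\cz{K}{B}$ (a central extension of it by the abelian group $\cz{K}{B}\cap\zz{K}$); or $\gfitt{\cz{\br{K_{1}}}{R}}$ is simple, whence $\gfitt{\cz{\br{K}}{B}}=\layerr{\cz{\br{K}}{B}}$ is a direct product of simple groups transitively permuted by $A$, i.e.\ $A$-simple. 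Lifting this last alternative back to $K$ is the one genuinely delicate point: one checks that $\ff{\cz{K_{1}}{A_{\infty}}}$ is central in $K_{1}$ (its image in $\cz{\br{K_{1}}}{R}$ is a nilpotent normal subgroup, hence trivial since $\gfitt{\cz{\br{K_{1}}}{R}}$ is simple), so that the Fitting contributions are absorbed into the layer and $\gfitt{\cz{K}{B}}$ is $A$-quasisimple ($A$-simple when $\zz{K}=1$).

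Finally, (c) and (d) both follow from the claim that \emph{if $B\leq B^{+}\leq A$ and $\cz{K}{B^{+}}=\cz{K}{B}$ then $B^{+}\leq B\cz{A}{K}$.} Granting this: for (d), $B\cz{A}{K}\leq\cz{A}{\cz{K}{B}}$ is immediate (each element of $B$ fixes $\cz{K}{B}$ pointwise), and if $a\in\cz{A}{\cz{K}{B}}$ then $\cz{K}{B\listgen{a}}=\cz{K}{B}$, so the claim gives $a\in B\cz{A}{K}$; and (c) is the claim applied to $B^{+}=\listgen{B,B^{*}}$ (for which $\cz{K}{B^{+}}=\cz{K}{B}\cap\cz{K}{B^{*}}=\cz{K}{B}$) together with the symmetric statement. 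To prove the claim, let $a\in B^{+}$. If $a\notin BA_{\infty}$ then $a$ moves some orbit $O_{j}$, so (as $B\normal A$) it carries the central factor $\cz{L_{j}}{B}$ of $\cz{K}{B}$ to a distinct one; since $a$ centralizes $\cz{K}{B}$, every $x\in\cz{L_{j}}{B}$ satisfies $x=x^{a}\in\cz{L_{j}}{B}^{a}$, so $\cz{L_{j}}{B}\leq L_{j}\cap L_{j}^{a}\leq\zz{K}$, contradicting that $\cz{L_{j}}{B}$ is nonabelian (by (a) and (b), using that $\cz{\br{K_{1}}}{R}$ is always nonabelian — inspect Theorem~\ref{k:1}(b) and Tables~1 and 2). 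Hence $B^{+}\leq BA_{\infty}$. If $A_{\infty}\leq B\cz{A}{K}$ we are done; otherwise $B$ lies in case (a) and $\card{A_{\infty}:\cz{A}{K}}=r$, so were $B^{+}\not\leq B\cz{A}{K}$ we would have $B^{+}\cz{A}{K}=BA_{\infty}\cz{A}{K}$, hence $\cz{K}{B}=\cz{K}{B^{+}}=\cz{K}{BA_{\infty}}$, i.e.\ $A_{\infty}$ centralizes $\cz{K}{B}$; but $\br{\cz{K}{B}}$ is overdiagonal by (a), so $A_{\infty}$ then centralizes every $\br{K_{i}}$ and hence $K$ (Lemma~\ref{p:5}), giving $A_{\infty}\leq\cz{A}{K}$, a contradiction.
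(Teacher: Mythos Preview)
Your proof is correct. For parts (a) and (b) you follow essentially the same route as the paper: decompose $K$ according to the $B$-orbits on the components, apply \CoprimeActionProd\ to factor $\cz{K}{B}$ as a central product over the orbits, and then invoke Lemma~\ref{p:7} on each orbit (in case (b) after first passing to $\cz{K}{A_{\infty}}$). Your treatment of the $\gfitt{}$ dichotomy in (b) is more detailed than the paper's, which simply says ``the remaining assertions follow from Theorem~\ref{k:1}''; both arguments gloss over the same small point about lifting the layer from $\br{K}$ back to $K$.

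For (c) and (d), however, your approach genuinely differs from the paper's. The paper argues as follows (after reducing to $\cz{A}{K}=1$): from $\cz{K}{BB^{*}}=\cz{K}{B}$ one first obtains $BB^{*}\cap A_{\infty}=B\cap A_{\infty}$ via the over/underdiagonal dichotomy; then the structural descriptions in (a) and (b) give that the number of central factors of $\cz{K}{B}$ equals $\card{A:BA_{\infty}}$, so (comparing, say, orders in $\br{K}$) one gets $\card{A:BB^{*}A_{\infty}}=\card{A:BA_{\infty}}$, forcing $\card{BB^{*}}=\card{B}$ and hence $B^{*}\leq B$; symmetrically $B\leq B^{*}$, giving (c), and (d) follows by taking $B^{*}=\cz{A}{\cz{K}{B}}$. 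Your argument instead shows directly that any $a$ centralizing $\cz{K}{B}$ must lie in $B\cz{A}{K}$: first $a\in BA_{\infty}$ (otherwise $a$ would move some nonabelian factor $\cz{L_{j}}{B}$ into a different $L_{j'}$, forcing it into $\zz{K}$), and then, if $B$ is in case (a), overdiagonality of $\cz{K}{B}$ rules out $a$ contributing a nontrivial $A_{\infty}$-part. The paper's route is shorter and exploits the counting in (a) and (b) more fully; yours is more hands-on and sidesteps having to read off $\card{A:BA_{\infty}}$ as an invariant of $\cz{K}{B}$, which is a mild advantage in the solvable branch of case (b) where there are no components to count.
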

\begin{proof}
    We may assume that $\cz{A}{K} = 1$.
    Then $\card{A_{\infty}} = 1$ or $r$ by Lemma~\ref{aqs:4}.

    (a). Let $m = \card{A:BA_{\infty}}$.
    Then $B$ has $m$ orbits on $\listset{K_{1},\ldots,K_{n}}$
    and these orbits are permuted transitively by $A$.
    Let $L_{1},\ldots,L_{m}$ be the subgroups of $K$ that
    are generated by these orbits.
    Then $K = L_{1}*\cdots *L_{m}$.
    \CoprimeActionProd\ implies that $\cz{K}{B} = \cz{L_{1}}{B}*\cdots*\cz{L_{m}}{B}$.
    The subgroups $\cz{L_{1}}{B},\ldots,\cz{L_{m}}{B}$
    are permuted transitively by $A$.
    Without loss, $L_{1} = K_{1} *\cdots * K_{l}$.
    Now $B \cap A_{\infty} = 1$ so $B$ is regular on $\listset{K_{1},\ldots,K_{l}}$.
    Apply Lemma~\ref{p:7}.

    (b). We have $\card{A_{\infty}} = r$ and so there exists $B_{0}$
    with $B = A_{\infty} \times B_{0}$.
    Now \[
        \cz{K}{A_{\infty}} = \cz{K_{1}}{A_{\infty}} *\cdots * \cz{K_{n}}{A_{\infty}}
    \]
    so applying an argument similar to that used in (a),
    with $B_{0}$ in place of $B$ and the $\cz{K_{i}}{A_{\infty}}$
    in place of the $K_{i}$,
    the first assertion follows.
    Since $\cz{K}{B} \leq \cz{K}{A_{\infty}}$,
    trivially $\cz{K}{B}$ is underdiagonal.
    The remaining assertions follow from Theorem~\ref{k:1}.

    (c). We have $\cz{K}{BB^{*}} = \cz{K}{B}$.
    Since a subgroup cannot be both underdiagonal and overdiagonal
    we have $BB^{*} \cap A_{\infty} = B \cap A_{\infty}$.
    Now (a) and (b) imply $\card{A:BB^{*}A_{\infty}} = \card{A:BA_{\infty}}$,
    whence $\card{BB^{*}} = \card{B}$ and $B^{*} \leq B$.
    Similarly, $B \leq B^{*}$ so $B = B^{*}$.

    (d). Apply (c) with $B^{*} = \cz{A}{\cz{K}{B}}$.
\end{proof}

The next result shows that modulo $\zz{K}$,
the subgroups just considered are the only
$A\cz{K}{A}$-invariant overdiagonal subgroups of $K$.

\begin{Lemma}\label{aqs:6}
    Suppose that $H$ is an $A\cz{K}{A}$-invariant overdiagonal subgroup of $K$.
    Then there exists $B \leq A$ such that $B \cap A_{\infty} \leq \cz{A}{K}$
    and \[
        H = \cz{K}{B}(H \cap \zz{K}).
    \]
    In particular, if $K$ is $A$-simple then $H = \cz{K}{B}$ and
    $H$ is $A$-simple with the same type as $K$.
\end{Lemma}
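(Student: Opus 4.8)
The plan is to reduce modulo $\zz K$ and then exploit the uniqueness machinery of Lemma~\ref{dp:3}. First I would pass to $\br K = K/\zz K$. By Theorem~\ref{p:4}(c) (\CoprimeActionQuot), $\br{\cz{K}{C}} = \cz{\br K}{C}$ for any $C \le A$, and since $\cz K A$ maps onto $\cz{\br K}{A}$, the group $\br H$ is an $A\cz{\br K}{A}$-invariant overdiagonal subgroup of $\br K = \br{K_1}\times\cdots\times\br{K_n}$. So it suffices to prove the statement for the $A$-simple group $\br K$: find $B \le A$ with $B \cap A_\infty \le \cz A K$ and $\br H = \cz{\br K}{B}$, and then lift — setting $H_0 = \cz K B$, we will have $\br{H_0} = \br H$, hence $H = H_0(H \cap \zz K) = \cz K B (H \cap \zz K)$, and the final sentence is immediate because when $K$ is $A$-simple, $\zz K$ may be taken trivial (or absorbed) so $H = \cz K B$, which is $A$-simple of the same type by Lemma~\ref{aqs:5}(a).

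For the core claim in $\br K$, apply Lemma~\ref{dp:3} to the overdiagonal subgroup $\br H$: there is a unique $A$-invariant (by uniqueness) partition $\{\mathcal L_1,\dots,\mathcal L_m\}$ of $\{\br{K_1},\dots,\br{K_n}\}$ with $\br H = \prod_j (\br H \cap \listgen{\mathcal L_j})$ and each $\br H \cap \listgen{\mathcal L_j}$ diagonal in $\listgen{\mathcal L_j}$. Since $A$ permutes the components, normalizes $\br H$, and the partition is canonical, $A$ permutes the blocks $\mathcal L_j$; transitivity of $A$ on components forces $A$ to be transitive on the blocks, and all blocks have the same size $\ell = n/m$. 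Let $B = \n A{\mathcal L_1}$, so $\card{A:B} = m$ and $\card{A:BA_\infty} = m$ as well once we check $A_\infty \le B$ — which holds because $A_\infty$ fixes every component hence every block. The condition $B \cap A_\infty \le \cz A K$ is exactly the assertion that $\br H$ is overdiagonal rather than underdiagonal: if some nontrivial element of $B \cap A_\infty$ acted nontrivially on $\br{K_1}$, the diagonal subgroup $\br H \cap \listgen{\mathcal L_1}$ would be proper in its projection, contradicting that the projection $\br H \to \br{K_1}$ is onto. So $B \cap A_\infty \le \cz A K = 1$ in $\br K$.

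It remains to identify $\br H \cap \listgen{\mathcal L_1}$ with $\cz{\listgen{\mathcal L_1}}{B}$ inside $\listgen{\mathcal L_1}$ and then reassemble. Here I would use that $\br H$ is $\cz{\br K}{A}$-invariant: the diagonal subgroup $\br H \cap \listgen{\mathcal L_1}$ is normalized by $\cz{\br K}{A}$, and by Lemma~\ref{p:6} the latter projects onto $\cz{\br{K_1}}{A_\infty}$, which by Theorem~\ref{k:1}(b),(d) is large enough to pin down the diagonal graph — concretely, a diagonal subgroup of a direct product of copies of a simple group that is invariant under $\cz{\br K}{A}$ must be the "$B$-twisted" diagonal, i.e.\ $\cz{\listgen{\mathcal L_1}}{B}$, because any other diagonal would fail to be centralized. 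Then $\cz{\br K}{B} = \prod_j \cz{\listgen{\mathcal L_j}}{B} = \br H$ by Lemma~\ref{aqs:5}(a) (applied with the given $B$), completing the proof.

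\textbf{Main obstacle.} The delicate step is the last one: showing that $\cz{\br K}{A}$-invariance of the diagonal subgroup $\br H \cap \listgen{\mathcal L_1}$ forces it to be exactly the graph $\cz{\listgen{\mathcal L_1}}{B}$ and not some other diagonal embedding obtained by composing with outer automorphisms of $\br{K_1}$. This is where Theorem~\ref{k:1}(e) (no nontrivial $R$-invariant $r'$-subgroup of $\aut{K_1}$ centralizes $\cz{K_1}{R}$) does the real work: it rules out the stray automorphisms that would produce a different $\cz{\br K}{A}$-invariant diagonal, and it is the one place where the $K$-group hypothesis is genuinely needed.
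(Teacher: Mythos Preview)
Your choice of $B$ is the real gap. You set $B = \n{A}{\mathcal L_{1}}$; since $A$ is abelian and transitive on the blocks this is the full kernel $A_{1}$ of the block action, and in particular $A_{\infty} \leq B$. Thus $B \cap A_{\infty} = A_{\infty}$, and the condition $B \cap A_{\infty} \leq \cz{A}{K}$ that you claim is simply the statement that $A_{\infty}$ acts trivially on $K$, which is false whenever $A_{\infty} \not\leq \cz{A}{K}$. Your justification confuses ``$\br H$ is overdiagonal'' (its projections to the $\br{K_{i}}$ are onto --- this is given) with ``$\cz{\br K}{B}$ is overdiagonal'' (which is what you would need, and which fails for your $B$). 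A concrete failure: take $\br K = \br{K_{1}}\times\cdots\times\br{K_{r}}$ and $A = \listgen{a}\times A_{\infty}$ of rank $2$, with $a$ cycling the factors and $A_{\infty}$ acting as a nontrivial field automorphism on each; let $H = \cz{\br K}{a}$, the standard diagonal. Then $H$ is $A\cz{\br K}{A}$-invariant and diagonal, there is a single block so your $B$ equals $A$, and $\cz{\br K}{A}$ is underdiagonal by Lemma~\ref{aqs:5}(b) --- certainly not equal to $H$. The correct $B$ here is $\listgen{a}$, a \emph{proper} subgroup of your block stabilizer.

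The paper avoids this by first isolating the purely diagonal case and setting $B = \cz{A}{H}$ there. Lemma~\ref{dp:4} gives $\cz{K}{A} \leq H$; since $H\cong K_{1}$ is simple, Theorem~\ref{k:1}(a) (cyclic Sylow $r$-subgroups of $\aut{K_{1}}$ --- not part (e) as you suggest) forces $\card{A:B}\leq r$, and then Lemma~\ref{aqs:5} pins down $H = \cz{K}{B}$. The general overdiagonal case reduces, via Lemma~\ref{p:6}, to the diagonal case applied to $H\cap L_{1}$ inside $L_{1}$ with $A_{1}$ in the role of $A$; the resulting $B$ lies in $A_{1}$ but is typically proper. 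One smaller point: your lifting from $\br H = \cz{\br K}{B}$ to $H = \cz{K}{B}(H\cap\zz{K})$ is not automatic from $H\zz{K} = \cz{K}{B}\zz{K}$ alone; you need that $\cz{K}{B}$ is perfect (Lemma~\ref{aqs:5}(a)) and then compare derived subgroups of both sides to obtain $\cz{K}{B}\leq H$ first.
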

\begin{proof}
    Suppose the lemma has been established in the case that $K$ is $A$-simple.
    Set $\br{K} = K/\zz{K}$.
    \CoprimeActionQuot\ implies that $\cz{\br{K}}{A} = \br{\cz{K}{A}}$
    so $\br{H}$ is $\cz{\br{K}}{A}$-invariant,
    whence $\br{H} = \cz{\br{K}}{B}$ for some
    $B \leq A$ with $B \cap A_{\infty} \leq \cz{A}{\br{K}}$.
    Lemma~\ref{p:5} implies that $B \cap A_{\infty} \leq \cz{A}{K}$.
    Another application of \CoprimeActionQuot\ yields \[
        H\zz{K} = \cz{K}{B}\zz{K}.
    \]
    Lemma~\ref{aqs:5}(a) implies that $\cz{K}{B}$ is $A$-quasisimple.
    In particular it is perfect.
    Then $H' = (H\zz{K})' = (\cz{K}{B}\zz{K})' = \cz{K}{B}$ so
    $\cz{K}{B} \leq H \leq \cz{K}{B}\zz{K}$
    and then $H = \cz{K}{B}(H \cap \zz{K})$.
    Hence we may suppose that $K$ is $A$-simple.

    Consider the case that $H$ is diagonal.
    Lemma~\ref{dp:4} implies $\cz{K}{A} \leq H$.
    Now $H \isomorphic K_{1}$ so $H$ is simple.
    Set $B = \cz{A}{H}$.
    Theorem~\ref{k:1} implies $\card{A:B} \leq r$.
    Observe that \[
        \cz{K}{A} \leq H \leq \cz{K}{B}.
    \]
    Suppose $A_{\infty} \leq \cz{A}{K}$.
    Lemma~\ref{aqs:5}(a) implies that $\cz{K}{A} \isomorphic K_{1}$
    whence $\cz{K}{A} = H$ and we are done.
    Suppose $A_{\infty} \not\leq \cz{A}{K}$.
    Now $H$ is overdiagonal and $H \leq \cz{K}{B}$
    so $\cz{K}{B}$ is overdiagonal.
    Lemma~\ref{aqs:5}(b) implies that $B \cap A_{\infty} \leq \cz{A}{K}$.
    As $\card{A:B} \leq r$ this forces $A = BA_{\infty}$ and then
    Lemma~\ref{aqs:5}(a) implies that $H = \cz{K}{B}$,
    again completing the proof in this case.

    Consider now the general case.
    Lemma~\ref{dp:3} implies there exists an $A$-invariant partition
    $\listset{\mathcal L_{1},\ldots,\mathcal L_{m}}$ of
    $\listset{K_{1},\ldots,K_{n}}$ such that \[
        H = (H \cap \listgen{\mathcal L_{1}}) \times\cdots\times%
        (H \cap \listgen{\mathcal L_{m}})
    \]
    and $H \cap \listgen{\mathcal L_{i}}$ is diagonal in
    $\listgen{\mathcal L_{i}}$ for each $i$.

    Let $A_{1} = %
    \kernel{(A\longrightarrow\sym{\listset{\mathcal L_{1},\ldots,\mathcal L_{m}}})}$.
    Since $A$ is abelian and transitive on $\listset{K_{1},\ldots,K_{n}}$
    it follows that $A_{1}$ is transitive on each $\mathcal L_{i}$.
    For each $i$, let $L_{i} = \listgen{\mathcal L_{i}}$,
    so $K = L_{1} \times\cdots\times L_{m}$ and we denote the projection map
    $K \longrightarrow L_{i}$ by $\lambda_{i}$.
    Lemma~\ref{p:6} implies that \[
        \cz{K}{A}\lambda_{i} = \cz{L_{i}}{A_{1}}.
    \]
    In particular, $H \cap L_{1}$ is $A_{1}\cz{L_{1}}{A_{1}}$-invariant.
    By the diagonal case,
    there exists $B \leq A_{1}$ with \[
        H \cap L_{1} = \cz{L_{1}}{B}.
    \]
    Now $A$ is abelian and transitive on $\listset{\mathcal L_{1},\ldots,\mathcal L_{m}}$
    whence $H \cap L_{i} = \cz{L_{i}}{B}$ for all $i$ and then \[
        H = \cz{K}{B}.
    \]
    Since $H$ is overdiagonal,
    Lemma~\ref{aqs:5}(b) implies that $B \cap A_{\infty} \leq \cz{A}{K}$.
\end{proof}

It remains to consider the $A\cz{K}{A}$-invariant underdiagonal subgroups.
Of particular interest is the case when there exist $A\cz{K}{A}$-invariant
solvable subgroups.
These are necessarily underdiagonal.

\begin{Lemma}\label{aqs:7} \mbox{}
    \begin{enumerate}
        \item[(a)]  For each $i$, $K_{i}$ possesses a unique maximal
                    $A_{\infty}\cz{K_{i}}{A_{\infty}}$-invariant
                    solvable subgroup $S_{i}$.
    \end{enumerate}
    Set $S = S_{1} *\cdots * S_{n}$.
    \begin{enumerate}
        \item[(b)]  $S$ is the unique maximal $A\cz{K}{A}$-invariant
                    solvable subgroup of $K$.

        \item[(c)]  Suppose $S \not\leq \zz{K}$.
                    Then $K$ is of type
                    $\ltwo{2^{r}}, \ltwo{3^{r}}, \uthree{2^{r}}$ or $\sz{2^{r}}$;
                    $\cz{K}{A} \leq \cz{K}{A_{\infty}} \leq S$ and
                    $S$ is a maximal $A$-invariant subgroup of $K$.
                    Moreover $S$ is the unique maximal $A\cz{K}{A}$-invariant
                    underdiagonal subgroup of $K$.
    \end{enumerate}
\end{Lemma}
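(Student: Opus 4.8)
The plan is to reduce immediately to the case $\cz{K}{A} = 1$ (equivalently $\cz{A}{K}=1$) by passing to $A/\cz{A}{K}$, since all hypotheses and conclusions concern $A$ and $A\cz{K}{A}$-invariant subgroups, and $\cz{A}{K}$ acts trivially. Then $\card{A_{\infty}} = 1$ or $r$ by Lemma~\ref{aqs:4}, and $A_{\infty}$ acts faithfully on each $K_{i}$. For part~(a) I would apply Theorem~\ref{k:1}(c) to the action of $A_{\infty}$ (either trivial, or of order $r$) on the simple $K$-group $K_{i}$: this furnishes the unique maximal $A_{\infty}\cz{K_{i}}{A_{\infty}}$-invariant solvable subgroup $S_{i}$ of $K_{i}$. (If $A_{\infty}$ is trivial then $S_{i}$ is simply the largest normal solvable subgroup of $K_{i}$, namely $1$, since $K_{i}$ is simple nonabelian; if $\card{A_{\infty}}=r$ then $S_{i}$ is exactly the subgroup provided by Theorem~\ref{k:1}(c).) Uniqueness is part of that theorem's statement.

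For part~(b), set $S = S_{1}*\cdots*S_{n}$. Since $A$ permutes $\listset{K_{1},\ldots,K_{n}}$ transitively and the $S_{i}$ are canonically attached to the $K_{i}$, the collection $\listset{S_{1},\ldots,S_{n}}$ is $A$-invariant, so $S$ is $A$-invariant; it is clearly solvable, being a central product of solvable groups. That $S$ is $\cz{K}{A}$-invariant follows because $\cz{K}{A}$ normalizes each $K_{i}$-factor structure in the underdiagonal setting — more precisely, I would argue that any $A$-invariant solvable subgroup $T$ of $K$ satisfies $T\pi_{i} \leq S_{i}$ for each projection $\pi_{i}$ (the image $T\pi_{i}$ is $A_{\infty}\cz{K_{i}}{A_{\infty}}$-invariant and solvable, using Lemma~\ref{p:6} to identify $\cz{K}{A}\pi_{i}$ with a centralizer in $K_{i}$), hence $T \leq T\pi_{1} * \cdots * T\pi_{n} \leq S$. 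This simultaneously shows $S$ contains every $A\cz{K}{A}$-invariant solvable subgroup and, applied to $T = S$ itself together with the already-established $A$-invariance, pins down $S$ as \emph{the} maximal such subgroup. The main point to be careful about here is that $S$ really is $\cz{K}{A}$-invariant and not merely $A$-invariant; I expect this to follow from the fact that $\cz{K}{A}$ normalizes each $K_{i}$ when $\cz{K}{A}$ is underdiagonal, but one must handle the overdiagonal possibility — however, an $A\cz{K}{A}$-invariant solvable subgroup is forced to be underdiagonal by Lemma~\ref{aqs:5}(a) (overdiagonal $A$-invariant subgroups are $A$-quasisimple, hence nonsolvable unless trivial), so the containment argument via projections applies.

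For part~(c), suppose $S \not\leq \zz{K}$, so some $S_{i} \not\leq \zz{K_{i}}$, i.e. $S_{i} \neq 1$ in $K_{i}/\zz{K}$. This can only happen when $A_{\infty}$ is nontrivial on $K_{i}$ and, by Theorem~\ref{k:1}(c), forces $K_{i}$ (and hence $K$, by transitivity) to have type $\ltwo{2^{r}}, \ltwo{3^{r}}, \uthree{2^{r}}$ or $\sz{2^{r}}$; moreover Theorem~\ref{k:1}(c) gives $\cz{K_{i}}{A_{\infty}}$ solvable with $\cz{K_{i}}{A_{\infty}} \leq S_{i}$, and $S_{i}$ maximal among $A_{\infty}\cz{K_{i}}{A_{\infty}}$-invariant proper subgroups. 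Taking central products, $\cz{K}{A_{\infty}} = \cz{K_{1}}{A_{\infty}}*\cdots*\cz{K_{n}}{A_{\infty}} \leq S$, and $\cz{K}{A} \leq \cz{K}{A_{\infty}} \leq S$. Maximality of $S$ as an $A$-invariant subgroup of $K$: any $A$-invariant subgroup $M$ properly containing $S$ would have $M\pi_{i}$ an $A_{\infty}\cz{K_{i}}{A_{\infty}}$-invariant subgroup of $K_{i}$ strictly containing $S_{i}$ for at least one $i$, forcing $M\pi_{i} = K_{i}$ for that $i$ and hence (by $A$-transitivity) all $i$, so $M$ is overdiagonal; but then Lemma~\ref{aqs:6} describes $M$ as $\cz{K}{B}\zz{K}$, which is nonsolvable and has nontrivial projections, contradicting $M \geq S$ together with the structure in Table~1. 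Finally, for $S$ to be the unique maximal $A\cz{K}{A}$-invariant underdiagonal subgroup: any such subgroup $U$ has all projections proper in the $K_{i}$, hence $U\pi_{i}$ is $A_{\infty}\cz{K_{i}}{A_{\infty}}$-invariant and proper, hence $U\pi_{i} \leq S_{i}$ by the maximality in Theorem~\ref{k:1}(c) — wait, that maximality is only among \emph{solvable} subgroups, so here I would instead invoke the last clause of Theorem~\ref{k:1}(c), that $S_{i}$ is maximal among $RC$-invariant \emph{proper} subgroups, to conclude $U\pi_{i} \leq S_{i}$ directly — whence $U \leq S$. The delicate step throughout is keeping straight which maximality property of $S_{i}$ (maximal solvable, versus maximal proper) is being used; for part~(c) the stronger "maximal proper $RC$-invariant" statement from Theorem~\ref{k:1}(c) is what makes the underdiagonal uniqueness and the $A$-maximality go through.
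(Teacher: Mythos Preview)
Your overall approach matches the paper's, and part~(a) and the containment half of part~(b) are correct. But there are two genuine gaps.

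\textbf{Reduction to $\zz{K}=1$.} You invoke projections $\pi_{i}:K\to K_{i}$ throughout, but these do not exist for a genuine central product. The paper first passes to $\br{K}=K/\zz{K}$ (using \CoprimeActionQuot\ and Lemma~\ref{p:5}), so that $K=K_{1}\times\cdots\times K_{n}$ is a direct product and the $\pi_{i}$ make sense. Without this reduction your projection arguments are not well-defined.

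\textbf{The $\cz{K}{A}$-invariance of $S$ in (b).} Your argument here is circular. You claim that for any $A$-invariant solvable $T$ the image $T\pi_{i}$ is $A_{\infty}\cz{K_{i}}{A_{\infty}}$-invariant; but $A$-invariance of $T$ only gives $A_{\infty}$-invariance of $T\pi_{i}$. To get $\cz{K_{i}}{A_{\infty}}$-invariance you need $T$ to be $\cz{K}{A}$-invariant already (via $\cz{K}{A}\pi_{i}=\cz{K_{i}}{A_{\infty}}$). So the projection argument only shows that every $A\cz{K}{A}$-invariant solvable subgroup lies in $S$; it does \emph{not} establish that $S$ itself is $\cz{K}{A}$-invariant, and applying it to $T=S$ assumes what you want to prove. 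The paper's fix is direct: since $\cz{K}{A}\pi_{i}=\cz{K_{i}}{A_{\infty}}$ by Lemma~\ref{p:6} and each $S_{i}$ is $\cz{K_{i}}{A_{\infty}}$-invariant by construction, conjugation by any $c\in\cz{K}{A}$ acts on $S_{i}$ as conjugation by $c\pi_{i}\in\cz{K_{i}}{A_{\infty}}$ and hence fixes it; thus $S=S_{1}\times\cdots\times S_{n}$ is $\cz{K}{A}$-invariant. Your digression about ``the overdiagonal possibility'' for $\cz{K}{A}$ is a red herring: the $K_{i}$ are components, hence normal in $K$, so $\cz{K}{A}$ normalizes each $K_{i}$ regardless.

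\textbf{A smaller point in (c).} Your argument that $S$ is a maximal $A$-invariant subgroup is incomplete in the overdiagonal case. If $S<M<K$ with $M$ $A$-invariant, then $\cz{K}{A}\leq S\leq M$ forces $M$ to be $A\cz{K}{A}$-invariant. If $M$ is overdiagonal, Lemma~\ref{aqs:6} gives $M=\cz{K}{B}$ with $B\cap A_{\infty}=1$; but then any $1\not=b\in B$ moves $K_{i}$, so $S_{i}\leq\cz{K}{B}\cap K_{i}$ forces $s=s^{b}\in K_{i}\cap K_{i}^{b}=1$ for all $s\in S_{i}$, contradicting $S_{i}\not=1$. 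Hence $B=1$ and $M=K$. Your sketch (``nonsolvable\ldots contradicting $M\geq S$ together with Table~1'') does not actually produce this contradiction.
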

\begin{proof}
    Using \CoprimeActionQuot\ and Lemma~\ref{p:5}
    we may suppose that $\zz{K} = 1$,
    so $K = K_{1} \times\cdots\times K_{n}$.
    For each $i$ let $\pi_{i}:K\longrightarrow K_{i}$ be the projection map.
    We may also assume $\cz{A}{K} = 1$,
    so Lemma~\ref{aqs:4} implies $\card{A_{\infty}} = 1$ or $r$
    and $A_{\infty}$ acts faithfully on each $K_{i}$.

    (a). If $A_{\infty} = 1$ then $K_{i} = \cz{K_{i}}{A_{\infty}}$
    and $K_{i}$ is simple so put $S_{i} = 1$.
    If $\card{A_{\infty}} = r$ then the existence of $S_{i}$
    follows from Theorem~\ref{k:1}(c).

    (b). Since $A_{\infty} \normal A$ it follows that $A$ permutes transitively
    the subgroups $A_{\infty}\cz{K_{i}}{A}$ and then that $A$ permutes
    the subgroups $S_{i}$.
    Thus $S$ is an $A$-invariant solvable subgroup of $K$.
    Lemma~\ref{p:6} implies $\cz{K}{A}\pi_{i} = \cz{K_{i}}{A_{\infty}}$
    and it follows that $S$ is $A\cz{K}{A}$-invariant.

    Suppose $H$ is an $A\cz{K}{A}$-invariant solvable subgroup of $K$.
    Now $H \leq H\pi_{1} \times\cdots\times H\pi_{n}$ and as
    $\cz{K}{A}\pi_{i} = \cz{K_{i}}{A_{\infty}}$ it follows that each
    $H\pi_{i}$ is an $A_{\infty}\cz{K_{i}}{A}$-invariant
    solvable subgroup of $K_{i}$.
    Then $H\pi_{i} \leq S_{i}$ and $H \leq S$.

    (c). Apply Theorem~\ref{k:1}(c).
\end{proof}

\begin{Lemma}\label{aqs:8} \mbox{}
    \begin{enumerate}
        \item[(a)]  For each $i$, $K_{i}$ possesses a unique maximal
                    $A_{\infty}\cz{K_{i}}{A_{\infty}}$-invariant
                    proper subgroup $M_{i}$.
    \end{enumerate}
    Set $M = M_{1} *\cdots * M_{n}$.
    \begin{enumerate}
        \item[(b)]  $M$ is the unique maximal $A\cz{K}{A}$-invariant
                    underdiagonal subgroup of $K$.

        \item[(c)]  Suppose $M \not\leq \zz{K}$.
                    Then $A_{\infty} \not\leq \cz{A}{K}$
                    and $\cz{K}{A_{\infty}} \leq M$.
                    If in addition $M \not= \cz{K}{A_{\infty}}\zz{K}$
                    then $K$ is of type $\ltwo{2^{r}}$ or $\sz{2^{r}}$
                    and $M$ is solvable.
    \end{enumerate}
\end{Lemma}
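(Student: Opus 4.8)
The plan is to follow the pattern of the proof of Lemma~\ref{aqs:7}. As there, \CoprimeActionQuot\ together with Lemma~\ref{p:5} reduces everything to the case $\zz{K}=1$, so that $K=K_{1}\times\cdots\times K_{n}$ with each $K_{i}$ simple; this is harmless via the usual preimage argument, using $\zz{K_{i}}\leq\frat{K_{i}}$ (so that the $M_{i}$, and hence $M$, contain $\zz{K}$), and in clause (c) the hypothesis $M\not\leq\zz{K}$ becomes $M\neq 1$, the conclusion $A_{\infty}\not\leq\cz{A}{K}$ becomes $A_{\infty}\neq 1$, and $M\neq\cz{K}{A_{\infty}}\zz{K}$ becomes $M\neq\cz{K}{A_{\infty}}$. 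One may also assume $\cz{A}{K}=1$, whence Lemma~\ref{aqs:4} gives $\card{A_{\infty}}=1$ or $r$ with $A_{\infty}$ acting faithfully on each $K_{i}$.

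For (a): if $A_{\infty}=1$ then $A_{\infty}\cz{K_{i}}{A_{\infty}}=K_{i}$, so by simplicity the only proper normal subgroup is $1$ and I take $M_{i}=1$. If $\card{A_{\infty}}=r$, then $A_{\infty}$ has order $r$ and acts faithfully on $K_{i}$, so Theorem~\ref{k:1}(d) applied with $R=A_{\infty}$ furnishes a unique maximal $A_{\infty}$-invariant (proper) subgroup $M_{i}$ containing $\cz{K_{i}}{A_{\infty}}$, and moreover $M_{i}\neq\cz{K_{i}}{A_{\infty}}$ forces $M_{i}$ solvable and $K_{i}\isomorphic\ltwo{2^{r}}$ or $\sz{2^{r}}$. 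To upgrade this to the statement that $M_{i}$ is the unique maximal $A_{\infty}\cz{K_{i}}{A_{\infty}}$-invariant \emph{proper} subgroup I would use two facts: for $c\in\cz{K_{i}}{A_{\infty}}$ the conjugate $M_{i}^{c}$ is again $A_{\infty}$-invariant (because $c$ is fixed by $A_{\infty}$) and still contains $\cz{K_{i}}{A_{\infty}}$, so $M_{i}^{c}=M_{i}$ by uniqueness, i.e.\ $M_{i}$ is $\cz{K_{i}}{A_{\infty}}$-invariant; and if $H$ is any $A_{\infty}\cz{K_{i}}{A_{\infty}}$-invariant proper subgroup then $H\cz{K_{i}}{A_{\infty}}$ is a subgroup which is $A_{\infty}$-invariant and contains $\cz{K_{i}}{A_{\infty}}$, so it is either proper, whence $H\leq H\cz{K_{i}}{A_{\infty}}\leq M_{i}$, or equal to $K_{i}$, whence $H$ is normalised by $\listgen{H,\cz{K_{i}}{A_{\infty}}}=K_{i}$ and simplicity forces $H=1$.

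For (b): since $A_{\infty}\normal A$ and $A$ is transitive on $\listset{K_{1},\ldots,K_{n}}$, $A$ permutes the subgroups $A_{\infty}\cz{K_{i}}{A_{\infty}}$ and hence the uniquely determined $M_{i}$, so $M=M_{1}\times\cdots\times M_{n}$ is $A$-invariant and is underdiagonal because each $M_{i}<K_{i}$; Lemma~\ref{p:6} gives $\cz{K}{A}\pi_{i}=\cz{K_{i}}{A_{\infty}}\leq M_{i}$, so $\cz{K}{A}\leq M$ and $M$ is $A\cz{K}{A}$-invariant. Conversely if $H$ is $A\cz{K}{A}$-invariant and underdiagonal then each $H\pi_{i}$ is a proper subgroup of $K_{i}$ that is $A_{\infty}$-invariant and is normalised by $\cz{K}{A}\pi_{i}=\cz{K_{i}}{A_{\infty}}$, so $H\pi_{i}\leq M_{i}$ by (a) and $H\leq\prod_{i}H\pi_{i}\leq M$; when $A_{\infty}=1$ this degenerates to $M=1$ and a direct projection argument, using that $\cz{K}{A}$ is diagonal, again shows $M$ is the only such subgroup. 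For (c): $M\neq 1$ forces some $M_{i}\neq 1$, hence $A_{\infty}\neq 1$, i.e.\ $A_{\infty}\not\leq\cz{A}{K}$; then $\card{A_{\infty}}=r$ and $\cz{K}{A_{\infty}}=\prod_{i}\cz{K_{i}}{A_{\infty}}\leq\prod_{i}M_{i}=M$; and if $M\neq\cz{K}{A_{\infty}}$ then $M_{j}\neq\cz{K_{j}}{A_{\infty}}$ for some $j$, and hence (since $A$ permutes the $M_{j}$ compatibly with the $K_{j}$) for every $j$, so Theorem~\ref{k:1}(d) makes each $K_{j}$ of type $\ltwo{2^{r}}$ or $\sz{2^{r}}$ and each $M_{j}$ solvable, whence $K$ has that type and $M=\prod_{j}M_{j}$ is solvable.

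The one genuine obstacle is the passage in (a) from Theorem~\ref{k:1}(d), which a priori only describes the largest $A_{\infty}$-invariant overgroup of $\cz{K_{i}}{A_{\infty}}$, to a statement about \emph{all} $A_{\infty}\cz{K_{i}}{A_{\infty}}$-invariant proper subgroups; the two observations above — about $M_{i}^{c}$ for $c\in\cz{K_{i}}{A_{\infty}}$, and about the subgroup $H\cz{K_{i}}{A_{\infty}}$ — are exactly what close that gap. Everything else is routine bookkeeping with the reductions $\zz{K}=\cz{A}{K}=1$, transitivity of $A$ on the factors, and Lemma~\ref{p:6}.
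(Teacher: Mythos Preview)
Your proposal is correct and takes essentially the same approach as the paper, whose proof consists solely of the sentence ``The proof is similar to the proof of Lemma~\ref{aqs:7} but using Theorem~\ref{k:1}(d) in place of Theorem~\ref{k:1}(c).'' You in fact supply more detail than the paper does: your two observations in (a) --- that $M_{i}^{c}=M_{i}$ for $c\in\cz{K_{i}}{A_{\infty}}$ by uniqueness, and that for an $A_{\infty}\cz{K_{i}}{A_{\infty}}$-invariant proper $H$ the product $H\cz{K_{i}}{A_{\infty}}$ is either proper (so $H\leq M_{i}$) or all of $K_{i}$ (so $H\normal K_{i}$ and $H=1$) --- explicitly bridge the gap between what Theorem~\ref{k:1}(d) literally provides (the unique maximal $R$-invariant subgroup \emph{containing} $C$) and what part (a) asserts (the unique maximal $RC$-invariant proper subgroup), a passage the paper leaves implicit.
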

\begin{proof}
    The proof is similar to the proof of Lemma~\ref{aqs:7}
    but using Theorem~\ref{k:1}(d) in place of Theorem~\ref{k:1}(c).
\end{proof}

\begin{Corollary}\label{aqs:9}
    Let $a \in A\nonid$ and suppose $H$ is an $A$-invariant subgroup
    that satisfies $\cz{K}{a} \leq H \leq K$
    and $H^{(\infty)} \not\leq \cz{K}{a}$.
    Then $H = K$.
\end{Corollary}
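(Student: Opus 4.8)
The plan is to reduce to the case $\zz{K}=1$ with $a$ acting nontrivially on $K$, prove that $H$ must be overdiagonal, deduce from Lemma~\ref{dp:4} that $H$ is in fact $A\cz{K}{A}$-invariant, apply Lemma~\ref{aqs:6} to write $H=\cz{K}{B}$ for some $B\leq A$, and finally force $B\leq\cz{A}{K}$. For the reductions: if $a\in\cz{A}{K}$ then $\cz{K}{a}=K$ and $H=K$ trivially, so we may assume $a$ is nontrivial on $K$. Passing to $\br{K}=K/\zz{K}$ — which is again an $A$-quasisimple $K$-group — \CoprimeActionQuot\ gives $\cz{\br{K}}{a}=\br{\cz{K}{a}}\leq\br{H}$; applying Lemma~\ref{p:5} to the perfect group $H^{(\infty)}$ shows that $a$ remains nontrivial on $\br{H}^{(\infty)}=\br{H^{(\infty)}}$ and on $\br{K}$; and since $K$ is perfect, $\br{H}=\br{K}$ would yield $H=K$. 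So we may assume $\zz{K}=1$, hence $K=K_{1}\times\cdots\times K_{n}$ with each $K_{i}$ simple and each $\pi_{i}$ an honest homomorphism.

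Next I claim $H$ is overdiagonal. Being $A$-invariant, $H$ is overdiagonal or underdiagonal. If $a\notin A_{\infty}$, so $a$ permutes the components nontrivially, then $\listgen{a}\cap A_{\infty}\leq\cz{A}{K}$ and Lemma~\ref{aqs:5}(a) says $\cz{K}{a}$ is overdiagonal and $A$-quasisimple, in particular perfect; being a perfect subgroup of $H$ it lies in $H^{(\infty)}$, so $H^{(\infty)}\geq\cz{K}{a}$ is overdiagonal and therefore so is $H$. Otherwise $a\in A_{\infty}$, so $a$ normalizes each $K_{i}$ and, by Lemma~\ref{aqs:4}, acts nontrivially on each $K_{i}$, whence $\cz{K}{a}=\cz{K_{1}}{a}\times\cdots\times\cz{K_{n}}{a}$. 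Suppose for contradiction that $H$ is underdiagonal. Then each $H\pi_{i}$ is a proper $\listgen{a}$-invariant subgroup of $K_{i}$ containing $\cz{K_{i}}{a}$, hence lies in the unique maximal such subgroup $M_{i}$ furnished by Theorem~\ref{k:1}(d). Since $A$ is transitive on $\listset{K_{1},\ldots,K_{n}}$ and $A$ is abelian, the two alternatives of Theorem~\ref{k:1}(d) hold uniformly: either $M_{i}=\cz{K_{i}}{a}$ for all $i$, forcing $H\leq H\pi_{1}\times\cdots\times H\pi_{n}=\cz{K}{a}$, or each $M_{i}$ is solvable, forcing $H$ solvable; in either case $H^{(\infty)}\leq\cz{K}{a}$, contrary to hypothesis. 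Hence $H$ is overdiagonal. This is the main obstacle: it is exactly here that the two genuinely different behaviours of $a$ — permuting the components versus acting inside each component — must be separated, and in the second case the $K$-group input of Theorem~\ref{k:1}(d) together with the hypothesis $H^{(\infty)}\not\leq\cz{K}{a}$ is indispensable.

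To conclude: since $H$ is overdiagonal in $K=K_{1}\times\cdots\times K_{n}$, Lemma~\ref{dp:4} gives $\n{K}{H}=H$; as $\cz{K}{A}\leq\cz{K}{a}\leq H$ this shows $\cz{K}{A}$ normalizes $H$, so $H$ is $A\cz{K}{A}$-invariant. Lemma~\ref{aqs:6}, with $H\cap\zz{K}=1$, now gives $H=\cz{K}{B}$ for some $B\leq A$. Put $B_{1}=\listgen{a}B$. Then $\cz{K}{a}\leq H=\cz{K}{B}$ forces $\cz{K}{B_{1}}=\cz{K}{a}=\cz{K}{\listgen{a}}$, so Lemma~\ref{aqs:5}(c) yields $B_{1}\cz{A}{K}=\listgen{a}\cz{A}{K}$. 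Thus $B\cz{A}{K}/\cz{A}{K}$ is a subgroup of $\listgen{a}\cz{A}{K}/\cz{A}{K}$, which has order $r$ because $a\notin\cz{A}{K}$. If $B\leq\cz{A}{K}$ then $H=\cz{K}{B}=K$, as required; otherwise $B\cz{A}{K}=\listgen{a}\cz{A}{K}$, so $H=\cz{K}{B}=\cz{K}{a}$ and then $H^{(\infty)}\leq\cz{K}{a}$, a contradiction. This completes the proof.
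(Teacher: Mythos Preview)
Your proof is correct and follows essentially the same route as the paper: reduce to $\zz{K}=1$, show $H$ is overdiagonal, apply Lemma~\ref{aqs:6} to obtain $H=\cz{K}{B}$, and then force $B$ to act trivially via Lemma~\ref{aqs:5}. The paper's version is terser---it also sets $\cz{A}{K}=1$, cites Lemma~\ref{aqs:8} in place of your direct case split on $a\in A_{\infty}$ and appeal to Theorem~\ref{k:1}(d), and uses Lemma~\ref{aqs:5}(d) rather than~(c) at the end---and in particular your explicit use of Lemma~\ref{dp:4} to verify that $H$ is $A\cz{K}{A}$-invariant before invoking Lemma~\ref{aqs:6} supplies a justification the paper leaves implicit.
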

\begin{proof}
    Using \CoprimeActionQuot\ we may assume that $\zz{K} = 1$.
    We may also assume that $\cz{A}{K} = 1$.
    Suppose that $H$ is underdiagonal.
    Lemma~\ref{aqs:8} implies that $H \leq \cz{K}{A_{\infty}}$.
    Now $\cz{K}{a} < H$ so $\cz{K}{a}$ is also underdiagonal,
    whence $a \in A_{\infty}$.
    As $\card{A_{\infty}} \leq r$ we obtain $\listgen{a} = A_{\infty}$
    whence $H = \cz{K}{a}$,
    a contradiction.
    We deduce that $H$ is overdiagonal.

    Lemma~\ref{aqs:6} implies $H = \cz{K}{B}$ for some $B \leq A$.
    Using Lemma~\ref{aqs:5}(d) we have $B \leq \cz{A}{\cz{K}{a}} = \listgen{a}$
    whence $B = 1$ or $\listgen{a}$.
    Now $\cz{K}{a} < H = \cz{K}{B}$ whence $B = 1$ and $H = K$.
\end{proof}

\begin{Lemma}\label{aqs:10}
    Suppose that $H$ is an $A\cz{K}{A}$-invariant subgroup of $K$
    and that $L \in \comp{A}{H}$.
    Then $L = \layerr{H}$ and either
    \begin{enumerate}
        \item[(a)]  $\cz{K}{A} = \cz{L}{A}$ and $L$ is overdiagonal; or

        \item[(b)]  $\layerr{\cz{K}{A}} = \layerr{\cz{L}{A}} \not= 1$
                    and $L$ is underdiagonal.
    \end{enumerate}
\end{Lemma}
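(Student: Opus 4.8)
The plan is to argue by induction on $\card{K}$, splitting according to whether $H$ is overdiagonal or underdiagonal in $K$; by the remark following Definition~\ref{aqs:3} exactly one of these holds, and the two cases will produce alternatives (a) and (b) respectively. Note $H$ is nonsolvable since $\comp{A}{H}\neq\varnothing$. Suppose first that $H$ is overdiagonal. Then Lemma~\ref{aqs:6} writes $H=\cz{K}{B}(H\cap\zz{K})$ for some $B\leq A$ with $B\cap A_{\infty}\leq\cz{A}{K}$, and Lemma~\ref{aqs:5}(a) shows $\cz{K}{B}$ is $A$-quasisimple, in particular perfect. Since $H\cap\zz{K}$ is central in $H$ we get $H^{(\infty)}=\cz{K}{B}$, and every component of $H$, being perfect, lies in $H^{(\infty)}$; hence $\layerr{H}=\cz{K}{B}$. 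As $\cz{K}{B}$ is $A$-quasisimple, $A$ is transitive on $\compp{\layerr{H}}=\compp{H}$, so $\layerr{H}$ is the unique $A$-component and $L=\layerr{H}=\cz{K}{B}$, which is overdiagonal by Lemma~\ref{aqs:5}(a). Finally $\cz{L}{A}=\cz{K}{B}\cap\cz{K}{A}=\cz{K}{A}$ since $B\leq A$, which is (a).

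Now suppose $H$ is underdiagonal. Since $H$ is nonsolvable, Lemma~\ref{aqs:8} places $H$ inside the maximal $A\cz{K}{A}$-invariant underdiagonal subgroup $M$, which is then nonsolvable, so $M\not\leq\zz{K}$. Lemma~\ref{aqs:8}(c) now yields $A_{\infty}\not\leq\cz{A}{K}$, $\cz{K}{A_{\infty}}\leq M$, and $M=\cz{K}{A_{\infty}}\zz{K}$ (the solvable alternative of \ref{aqs:8}(c) being excluded by nonsolvability of $M$). Hence $H\leq\cz{K}{A_{\infty}}\zz{K}$ and $\layerr{H}\leq\layerr{\cz{K}{A_{\infty}}}$. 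Applying Lemma~\ref{aqs:5}(b) with $B=A_{\infty}$, and using that $\cz{K}{A_{\infty}}$ is nonsolvable (otherwise $M$ would be), we find that $\gfitt{\cz{K}{A_{\infty}}}$ is $A$-quasisimple; put $E_{0}=\layerr{\cz{K}{A_{\infty}}}=\gfitt{\cz{K}{A_{\infty}}}$. Then $E_{0}$ satisfies Hypothesis~\ref{aqs:1}, it is underdiagonal in $K$ (being a subgroup of $M$), and $\card{E_{0}}<\card{K}$ because $A_{\infty}$ acts nontrivially on $K$ by Lemma~\ref{aqs:4}.

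Since $L\leq\layerr{H}\leq E_{0}$ and $\layerr{H}$, being characteristic in $H$, is $A\cz{E_{0}}{A}$-invariant with $L\in\comp{A}{\layerr{H}}$, the inductive hypothesis applied to $(E_{0},\layerr{H},L)$ gives $L=\layerr{\layerr{H}}=\layerr{H}$, the first assertion of the lemma; and $L$ is underdiagonal in $K$. It remains to identify $\layerr{\cz{K}{A}}$ with $\layerr{\cz{L}{A}}$ and to see it is nonzero. Apply Lemma~\ref{aqs:5}(b) with $B=A$ (legitimate since $A\cap A_{\infty}=A_{\infty}\not\leq\cz{A}{K}$): either $\cz{K}{A}$ is solvable or $\gfitt{\cz{K}{A}}$ is quasisimple. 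The first is impossible, since the type of $K$ cannot be one of $\ltwo{2^{r}},\ltwo{3^{r}},\uthree{2^{r}},\sz{2^{r}}$ — for those types Table~1 of Theorem~\ref{k:1} forces $\cz{K_{1}}{A_{\infty}}$, hence $\cz{K}{A_{\infty}}$ and $M$, to be solvable. So $\cz{K}{A}$ has a unique component $\layerr{\cz{K}{A}}\neq1$, equal to $\gfitt{\cz{K}{A}}$. As $L=\layerr{H}\normal H$ and $\cz{K}{A}$ normalizes $H$, we have $\cz{L}{A}\normal\cz{K}{A}$; since $\cz{L}{A}$ is nonsolvable by Theorem~\ref{k:4}(a), Lemma~\ref{p:2}(a) forces $\layerr{\cz{K}{A}}\leq\cz{L}{A}$ (the alternative $[\layerr{\cz{K}{A}},\cz{L}{A}]=1$ would put $\cz{L}{A}$ into the abelian group $\cz{\cz{K}{A}}{\gfitt{\cz{K}{A}}}$). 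Normality then gives $\layerr{\cz{K}{A}}=\layerr{\cz{L}{A}}\neq1$, which is (b).

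The main obstacle is the underdiagonal case, and within it the reduction to $E_{0}=\gfitt{\cz{K}{A_{\infty}}}$: it is Lemmas~\ref{aqs:8} and~\ref{aqs:5}(b), together with the $K$-group facts of Theorem~\ref{k:1}, that make this smaller $A$-quasisimple group appear and drive the induction, and one then has to push the centralizer statement established inside $E_{0}$ back up to $K$ — which succeeds precisely because $\cz{K}{A}$ is, by Lemma~\ref{aqs:5}(b), almost quasisimple and hence has a single component. A secondary point needing care throughout is checking that the subgroups in play ($\layerr{H}$, $\cz{L}{A}$, $E_{0}\cap\cz{K}{A}$) are genuinely normal or characteristic where asserted, so that components cannot escape.
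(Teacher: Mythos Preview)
Your argument is essentially correct, but one containment is asserted without justification, and there is a minor misstatement; the overall route also differs from the paper's.

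The unjustified step is $\layerr{H}\leq E_{0}=\layerr{\cz{K}{A_{\infty}}}$ in the underdiagonal case. From $H\leq\cz{K}{A_{\infty}}\zz{K}$ and perfectness you get $\layerr{H}\leq\cz{K}{A_{\infty}}$, but the further descent into $E_{0}$ needs $\cz{K}{A_{\infty}}/E_{0}$ to be abelian. This does hold: by Theorem~\ref{k:1}(b) and the Tables, in the nonsolvable cases each $\cz{\br{K_{i}}}{A_{\infty}}$ has index $1$, $2$ or $3$ over its layer, so $\cz{K}{A_{\infty}}/E_{0}$ is abelian and the perfect group $\layerr{H}$ drops into $E_{0}$. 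Once this is noted your induction is sound --- and in fact terminates after a single step, since $A_{\infty}$ centralizes $E_{0}$, so by Lemma~\ref{aqs:8}(c) the maximal underdiagonal subgroup of $E_{0}$ lies in $\zz{E_{0}}$ and $\layerr{H}$ must be overdiagonal there. The misstatement is ``$\cz{L}{A}$ is nonsolvable by Theorem~\ref{k:4}(a)'': that result only gives non-nilpotent, hence nonabelian, which is exactly what your application of Lemma~\ref{p:2}(a) uses.

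The paper avoids the induction entirely. It observes at the outset, citing Lemmas~\ref{aqs:5}, \ref{aqs:6} and \ref{aqs:8}, that $\layerr{H}$ is either trivial or $A$-quasisimple, whence $L=\layerr{H}$ immediately; it then splits on whether $L$ (rather than $H$) is over- or underdiagonal. For the centralizer identification in~(b) the paper passes to $\br{K}=K/\zz{K}$, uses that $\gfitt{\cz{\br{K}}{A}}$ is simple (Lemma~\ref{aqs:5}) together with $\gfitt{\cz{\br{L}}{A}}\normal\gfitt{\cz{\br{K}}{A}}$, and lifts back by taking derived subgroups. Your direct argument inside $\cz{K}{A}$ via Lemma~\ref{p:2}(a) is a perfectly valid variant of the same idea; the paper's quotient approach is slightly slicker in that it sidesteps the need to rule out $\cz{L}{A}$ being abelian separately.
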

\begin{proof}
    Lemmas~\ref{aqs:5}, \ref{aqs:6} and \ref{aqs:8} imply that $\layerr{H}$
    is either trivial or $A$-quasisimple.
    Since $L \in \comp{A}{H}$ it follows that $L = \layerr{H}$.
    In particular, $L$ is $A\cz{K}{A}$-invariant.

    Suppose that $L$ is overdiagonal.
    Lemma~\ref{aqs:6} implies that $L = \cz{K}{B}(L \cap \zz{K})$
    for some $B \leq A$ with $B \cap A_{\infty} \leq \cz{A}{K}$.
    Lemma~\ref{aqs:5} implies that $\cz{K}{B}$ is $A$-quasisimple.
    Since $L$ is also $A$-quasisimple,
    it follows that $L = \cz{K}{B}$.
    Now $B \leq A$ whence $\cz{L}{A} = \cz{K}{A}$ and (a) holds.
    Hence we may assume that $L$ is underdiagonal.

    Let $\br{K} = K/\zz{K}$.
    Suppose $\cz{\br{K}}{A}$ is solvable.
    Lemma~\ref{aqs:7} implies that $K$ is of type
    $\ltwo{2^{r}}, \ltwo{3^{r}}, \uthree{2^{r}}$ or $\sz{2^{r}}$.
    Lemma~\ref{aqs:7}(c) implies that any $A\cz{K}{A}$-invariant
    proper subgroup of $\br{K}$ is solvable.
    Then $\br{L} = \br{K}$ contrary to $L$ being underdiagonal.
    Hence $\cz{\br{K}}{A}$ is nonsolvable.
    Lemma~\ref{aqs:5} implies that $\gfitt{\cz{\br{K}}{A}}$ is simple.
    Now $\gfitt{\cz{\br{L}}{A}} \normal \gfitt{\cz{\br{K}}{A}}$ whence
    $\layerr{\cz{\br{L}}{A}} = \layerr{\cz{\br{K}}{A}} \not= 1$.
    \CoprimeActionQuot\ implies that
    $\layerr{\br{\cz{L}{A}}} = \layerr{\br{\cz{K}{A}}}$.
    Since $\br{K} = K/\zz{K}$ it follows that
    $\layerr{\cz{L}{A}}\zz{K} = \layerr{\cz{K}{A}}\zz{K}$.
    Then (b) follows on taking the derived subgroup of both sides.
\end{proof}

We record the following triviality.

\begin{Lemma}\label{aqs:11}
    Let $a \in A$.
    Then $[K,a] = 1$ or $K$.
\end{Lemma}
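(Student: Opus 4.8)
The plan is to use the fact that $A$ is elementary abelian and transitive on the components $\listset{K_1,\ldots,K_n}$, together with the observation that $a$ either normalizes all the $K_i$ or permutes them in a single cycle of length $r$. First I would set $\br K = K/\zz K$, noting that by Lemma~\ref{p:5} it suffices to prove $[\br K,a]=1$ or $\br K$; so I may assume $\zz K = 1$ and $K = K_1 \times\cdots\times K_n$ is a genuine direct product of nonabelian simple groups permuted transitively by $A$. The key dichotomy is whether $a \in A_\infty$ or not.

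If $a \in A_\infty$, then $a$ normalizes each $K_i$, so $[K,a] = [K_1,a]\times\cdots\times[K_n,a]$, and since $A$ is transitive on the $K_i$ it permutes the set $\set{K_i}{[K_i,a]\not=1}$; but this set is $A$-invariant hence empty or everything, giving $[K,a]=1$ or $[K,a] = \prod_i [K_i,a]$. To finish this case I would show that when $a$ acts nontrivially on each simple factor, in fact $[K_i,a]=K_i$: this follows because $K_i$ is a nonabelian simple group, $a$ has prime order $r$ coprime to $|K_i|$, and $[K_i,a]\normal K_i$ by Coprime Action, so $[K_i,a]$ is a normal subgroup of the simple group $K_i$, hence trivial or all of $K_i$; since $a$ is nontrivial on $K_i$ and $[K_i,a]=[K_i,a,a]$ by \CoprimeActionComm, triviality of $[K_i,a]$ would force $a$ trivial on $K_i$. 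Hence $[K,a]=K$ in this subcase.

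If $a \notin A_\infty$, then since $A/A_\infty$ is regular on $\listset{K_1,\ldots,K_n}$ and $a$ has order $r$, the element $a$ permutes the components in $n/r$ cycles each of length $r$ (so in particular $r \mid n$). I would then argue componentwise on a single $a$-orbit, say $\listset{K_1,\ldots,K_r}$ with $K_i = K_1^{a^{i-1}}$: the subgroup $K_1\times\cdots\times K_r$ is $\listgen{a}$-invariant, and a direct computation shows $\cz{K_1\times\cdots\times K_r}{a}$ is the diagonal $\set{(x,x^a,\ldots,x^{a^{r-1}})}{x\in K_1}$, which is a proper subgroup isomorphic to $K_1$, so $a$ acts nontrivially on $K_1\times\cdots\times K_r$; applying Coprime Action again, $[K_1\times\cdots\times K_r,a]$ is a normal $\listgen{a}$-invariant subgroup, and one checks (again using that it equals its own commutator with $a$, and that the only $\listgen{a}$-invariant normal subgroups of $K_1\times\cdots\times K_r$ are products of subsets of $\{K_i\}$ closed under $a$, i.e. $1$ or everything) that $[K_1\times\cdots\times K_r,a] = K_1\times\cdots\times K_r$. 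Running this over all $n/r$ orbits gives $[K,a]=K$.

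The main obstacle, such as it is, is the componentwise bookkeeping: making precise that the $\listgen{a}$-invariant normal subgroups of a product of simple groups cyclically permuted by $a$ are exactly $1$ and the whole product, and hence that $[K,a]$ — being normal, $\listgen{a}$-invariant, and perfect-under-$a$ in the sense $[K,a]=[K,a,a]$ — cannot be proper without being trivial, whereupon $a$ would centralize $K$. All of this is elementary; the only genuine content is assembling the two cases $a\in A_\infty$ and $a\notin A_\infty$ and invoking \CoprimeActionComm\ and \CoprimeActionQuot\ at the right moments.
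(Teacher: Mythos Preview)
Your argument is correct, but it takes a longer and more explicit route than the paper's. The paper's proof is a two-line observation: since $A$ is abelian, $[\br K,a]$ is an $A$-invariant normal subgroup of the $A$-simple group $\br K = K/\zz K$, so it equals $1$ or $\br K$; then $K=[K,a]\zz K$ and perfectness of $K$ force $K=[K,a]$. Your proof instead unpacks the definition of $A$-simplicity by hand, splitting on whether $a\in A_\infty$ and analysing the component decomposition directly in each case. What you gain is an explicit picture of $[K,a]$ in terms of the $K_i$; what you lose is brevity, and you also reprove a special case of the general fact that an $A$-simple group has no nontrivial proper $A$-invariant normal subgroups. Two minor presentational points: the reduction to $\zz K=1$ uses Lemma~\ref{p:5} only for the direction $[\br K,a]=1\Rightarrow [K,a]=1$, while the direction $[\br K,a]=\br K\Rightarrow [K,a]=K$ needs the perfectness argument (which you should make explicit); and the normality $[K_i,a]\normal K_i$ holds for any automorphism $a$, with no need to invoke coprime action.
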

\begin{proof}
    Suppose $[K,a] \not= 1$.
    Set $\br{K} = K/\zz{K}$.
    Lemma~\ref{p:5} implies that $[\br{K},a] \not= 1$.
    Now $A$ is abelian so $[\br{K},a]$ is an $A$-invariant normal
    subgroup of $\br{K}$.
    Then $\br{K} = [\br{K},a]$ because $\br{K}$ is $A$-simple.
    Consequently $K = [K,a]\zz{K}$ so as $K$ is perfect,
    we obtain $K = [K,a]$.
\end{proof}

We close with a lemma of generation.
Recall that $\hyp{A}$ denotes the set of subgroups of index $r$ in $A$.

\begin{Lemma}\label{aqs:12} \mbox{}
    \begin{enumerate}
        \item[(a)]  Let $B \leq A$ and suppose $\cz{K}{B}$ is overdiagonal.
                    Then $\cz{K}{C}$ is overdiagonal and $A$-quasisimple
                    for all $C \leq B$.

        \item[(b)]  Suppose $1 \not= A^{*} \leq A$.
                    Then \[
                        K = \gen{\cz{K}{B}}{\mbox{$B \in \hyp{A^{*}}$ and $\cz{K}{B}$ is overdiagonal}}.
                    \]
    \end{enumerate}
\end{Lemma}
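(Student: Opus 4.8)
\emph{Proof plan.} Part~(a) is immediate from Lemma~\ref{aqs:5}. An $A$-invariant subgroup of $K$ cannot be both overdiagonal and underdiagonal, so Lemma~\ref{aqs:5}(b) shows that the hypothesis forces $B \cap A_{\infty} \leq \cz{A}{K}$; then $C \cap A_{\infty} \leq \cz{A}{K}$ for every $C \leq B$, and Lemma~\ref{aqs:5}(a), applied with $C$ in place of $B$, gives that $\cz{K}{C}$ is overdiagonal and $A$-quasisimple.

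For part~(b), put $H = \gen{\cz{K}{B}}{\mbox{$B \in \hyp{A^{*}}$ and $\cz{K}{B}$ is overdiagonal}}$. The plan is to recognise $H$ as $\cz{K}{B_{0}}$ for some $B_{0} \leq A$ by means of Lemma~\ref{aqs:6}, and then to force $B_{0} \leq \cz{A}{K}$ by a counting argument with hyperplanes of $A^{*}$. The case $A^{*} \leq \cz{A}{K}$ is trivial, since then $\cz{K}{B} = K$ for every $B \in \hyp{A^{*}}$ and $K$ is overdiagonal; so I assume $A^{*} \not\leq \cz{A}{K}$. Throughout I would use the elementary fact that a nonzero finite-dimensional vector space over $\field{F}_{r}$ has a hyperplane avoiding any one or two prescribed nonzero vectors (clear in dimension $1$, and a routine count of the hyperplanes through a point otherwise, valid for every prime $r$); in particular this makes the generating set defining $H$ nonempty. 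First I would check that $H$ is $A\cz{K}{A}$-invariant ($A$ normalises each $\cz{K}{B}$, and $\cz{K}{A} \leq \cz{K}{B} \leq H$), overdiagonal (the subgroup generated by overdiagonal $A$-invariant subgroups is overdiagonal, since already a single such generator projects onto every $\br{K_{i}}$), and perfect (each $\cz{K}{B}$ is $A$-quasisimple by part~(a), hence perfect).

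Lemma~\ref{aqs:6} then supplies $B_{0} \leq A$ with $B_{0} \cap A_{\infty} \leq \cz{A}{K}$ and $H = \cz{K}{B_{0}}(H \cap \zz{K})$; since $H$ and $\cz{K}{B_{0}}$ are perfect and $\zz{K}$ is central, $H = \cz{K}{B_{0}}$. Next, for each overdiagonal $B \in \hyp{A^{*}}$, from $\cz{K}{B} \leq H$ and Lemma~\ref{aqs:5}(d) one gets $B_{0}\cz{A}{K} = \cz{A}{H} \leq \cz{A}{\cz{K}{B}} = B\cz{A}{K}$. Writing $\br{A} = A/\cz{A}{K}$ and $\br{X}$ for the image of $X \leq A$, this reads $\br{B_{0}} \leq \br{B}$ for every overdiagonal $B \in \hyp{A^{*}}$, and it suffices to show that these $\br{B}$ have trivial intersection, for then $B_{0} \leq \cz{A}{K}$ and $H = \cz{K}{B_{0}} = K$. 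To finish: since $\cz{A}{K} \leq A_{\infty}$, Lemma~\ref{aqs:4} gives $\dim \br{A_{\infty}} \leq 1$, and the modular law yields $\br{B} \cap \br{A_{\infty}} = \br{B \cap A_{\infty}}$ for $B \leq A^{*}$; combined with Lemma~\ref{aqs:5}, this shows that $\cz{K}{B}$, for $B \in \hyp{A^{*}}$, is overdiagonal precisely when $\br{B} \cap L = 0$, where $L := \br{A_{\infty}} \cap \br{A^{*}}$ has dimension at most $1$. Moreover every hyperplane of $\br{A^{*}}$ equals $\br{B}$ for its full preimage $B$ in $A^{*}$, which lies in $\hyp{A^{*}}$. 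So if $\br{B_{0}} \neq 0$, choose $\br{w} \in \br{B_{0}} \setminus \{0\}$ (note $\br{B_{0}} \leq \br{A^{*}}$); the elementary fact, applied to $\br{A^{*}}$ with $\br{w}$ together with a generator of $L$ when $L \neq 0$, produces a hyperplane $V$ of $\br{A^{*}}$ with $\br{w} \notin V$ and $V \cap L = 0$, hence an overdiagonal $B \in \hyp{A^{*}}$ with $\br{w} \notin \br{B}$, contradicting $\br{B_{0}} \leq \br{B}$. Therefore $\br{B_{0}} = 0$, as required.

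The one step needing genuine care is the last one: establishing, from Lemma~\ref{aqs:5} and the bound $\dim \br{A_{\infty}} \leq 1$, the precise dictionary between overdiagonality of $\cz{K}{B}$ and the position of $\br{B}$ relative to $L$, which entails a small case split according as $L$ is zero or a line. The reductions, the appeals to Lemmas~\ref{aqs:4}, \ref{aqs:5} and \ref{aqs:6}, and the hyperplane count are all routine.
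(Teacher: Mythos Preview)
Your proposal is correct and follows essentially the same route as the paper. Both proofs handle (a) identically via Lemma~\ref{aqs:5}, and for (b) both show the generated subgroup is overdiagonal and perfect, invoke Lemma~\ref{aqs:6} to write it as $\cz{K}{B_{0}}$, and then use Lemma~\ref{aqs:5}(d) together with the fact that the relevant hyperplanes of $A^{*}$ intersect trivially; the only cosmetic differences are that the paper quotients out $\cz{A}{K}$ at the outset (rather than carrying bars) and asserts $\bigcap\mathcal{H}=1$ directly, whereas you spell out the hyperplane-avoidance count explicitly.
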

\begin{proof}
    (a). Lemma~\ref{aqs:5}(b) implies $B \cap A_{\infty} \leq \cz{A}{K}$.
    Then for each $C \leq B$ we have $C \cap A_{\infty} \leq \cz{A}{K}$.
    The conclusion follows from Lemma~\ref{aqs:5}(a).

    (b). If $A^{*} \leq \cz{A}{K}$ then $K = \cz{K}{B}$ for any $B \in \hyp{A^{*}}$.
    Hence we may assume that $A^{*} \not\leq \cz{A}{K}$.
    Then $A^{*}$ has nontrivial image in $A/\cz{A}{K}$ and we may replace
    $A$ by $A/\cz{A}{K}$ to assume that $\cz{A}{K} = 1$.

    Let $\mathcal H$ be the set of hyperplanes of $A^{*}$
    that intersect $A_{\infty}$ trivially.
    Lemma~\ref{aqs:4} implies $\card{A^{*} \cap A_{\infty}} = 1$ or $r$.
    Note that if $A^{*} \cap A_{\infty} = A^{*}$ then $\card{A^{*}} = r$
    and $\mathcal H = \listset{1}$.
    It follows that $\cap\mathcal H = 1$.
    Let \[
        L = \gen{ \cz{K}{B} }{ B \in \mathcal H }.
    \]
    Now $\cz{K}{B}$ is overdiagonal and perfect for each $B \in \mathcal H$
    by Lemma~\ref{aqs:5}(a).
    It follows that $L$ is overdiagonal and perfect.
    Lemma~\ref{aqs:6} implies $L = \cz{K}{C}$ for some $C \leq A$.
    Then, using Lemma~\ref{aqs:5}(d), we have \[
        C \leq \cz{A}{L} = \bigcap_{B \in \mathcal H}\cz{A}{\cz{K}{B}} %
        = \bigcap_{B \in \mathcal H}B = 1.
    \]
    Then $L = \cz{K}{C} = K$,
    completing the proof.
\end{proof}
\section{Modules}\label{m}
Two results on modules,
which are central to the theory being developed in this paper,
will be established.
The first result has previously been proved by the author \cite{F4}.
The proof presented here is much shorter.
However, it requires the $K$-group hypothesis whereas
the proof in \cite{F4} does not.

\begin{Theorem}\label{m:1}
    Let $R$ be a group of prime order $r$ that acts on the $r'$-group $G$.
    Assume that $G$ is a $K$-group.
    Let $V$ be a faithful completely reducible $\pring{F}{RG}$-module
    over a field $\field{F}$ of characteristic $p$.
    Assume that $\pring{F}{R}$ is not a direct summand of $V_{R}$.
    Then either:
    \begin{itemize}
        \item   $[G,R] = 1$ or

        \item   $r$ is a Fermat prime and $[G,R]$ is a special $2$-group.
    \end{itemize}
\end{Theorem}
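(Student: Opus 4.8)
The plan is to assume $[G,R] \neq 1$ and deduce that $r$ is a Fermat prime and that $[G,R]$ is a special $2$-group. First I would reduce to the case $G = [G,R]$: since $[G,R] \normal G$ is $R$-invariant, an $r'$-group and a $K$-group, since $R[G,R] \normal RG$ so that $V$ remains faithful and (by Clifford's theorem) completely reducible for $R[G,R]$, and since $[[G,R],R] = [G,R]$ by \CoprimeActionComm, one may replace $G$ by $[G,R]$. Thus $G = [G,R] \neq 1$, and \CoprimeActionGFitt\ gives $[\gfitt{G},R] \neq 1$.

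The core of the argument is to show that $[\layerr{G},R] = 1$ and $[\op{q}{G},R] = 1$ for every odd prime $q$. For the layer I would argue as follows. If $R$ moves some component, choose an $R$-orbit $\listset{L_{1},\ldots,L_{r}}$ of components and a cyclic $t$-subgroup $Y_{1} \leq L_{1}$ with $Y_{1} \not\leq \zz{L_{1}}$ for some prime $t \notin \listset{r,p}$ (possible since $L_{1}/\zz{L_{1}}$ is nonabelian simple); then $Y = \listgen{Y_{1}^{R}}$ is an $R$-invariant abelian $t$-group with $[Y,R] \neq 1$, and as $Y \leq G$ acts faithfully on $V$, Lemma~\ref{p:10} (whose exceptional hypothesis ``$Q$ abelian'' is met) shows $\pring{F}{R}$ is a direct summand of $V_{R}$, a contradiction. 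Hence $R$ fixes every component. If $R$ is nontrivial on a component $L$, pass to $\widetilde{L} = L/\kernelon{L}{V}$; this is quasisimple, faithful on $V$, with $\widetilde{L}/\zz{\widetilde{L}} \isomorphic L/\zz{L}$ a simple $K$-group, and by Lemma~\ref{p:5} the induced group $\widetilde{R} \leq \aut{\widetilde{L}}$ has order $r$, so Theorem~\ref{k:1}(f)(i) again gives $\pring{F}{R}$ a direct summand of $V_{R}$. Hence $[\layerr{G},R] = 1$. For $\op{q}{G}$ with $q$ odd: if $q \neq p$ and $[\op{q}{G},R] \neq 1$ then Lemma~\ref{p:10} with $Q = \op{q}{G}$ (no exceptional case, since $q \neq 2$) gives a contradiction; if $q = p$ then $\op{p}{G} \normal RG$ centralizes every irreducible constituent of $V$, so $\op{p}{G} = 1$. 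The same fixed-point argument shows that if $\fieldchar{F} = 2$ then $\op{2}{G} = 1$, whence $[\gfitt{G},R] = 1$, contrary to the above; so $p \neq 2$ (and likewise $r \neq 2$).

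Consequently $[\gfitt{G},R] = [\op{2}{G},R] \neq 1$, so $\op{2}{G} \neq 1$ and $[\op{2}{G},R]$ acts nontrivially on $V$. Here I would invoke the full form of \cite[Theorem~5.1]{F1} — of which Lemma~\ref{p:10} records only the alternative that $\pring{F}{R}$ is a direct summand — applied to $R$ acting on $\op{2}{G}$ over $\field{F}$ (of characteristic $p \neq 2$): since $\pring{F}{R}$ is \emph{not} a direct summand of $V_{R}$, we land in the exceptional alternative, namely $r$ is a Fermat prime and $[\op{2}{G},R]$ is a special $2$-group. Finally, since $G = [G,R]$ centralizes the perfect group $\layerr{G}$ we get $\layerr{G} = 1$, so $G$ is constrained; and by the Three Subgroups Lemma $G = [G,R]$ centralizes each $\op{q}{G}$ with $q$ odd, whence $\op{q}{G} \leq \zz{G}$ for odd $q$ and (applying \CoprimeActionGFitt\ to $\op{2'}{G}$) $\op{2'}{G} \leq \zz{G}$. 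It then remains to prove $[G,R] \leq \op{2}{G}$, after which $[G,R] = [G,R,R] \leq [\op{2}{G},R] \leq [G,R]$ forces $[G,R] = [\op{2}{G},R]$, which is special.

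The main obstacle is twofold. Eliminating the layer and the odd part of $\ff{G}$ is exactly where the $K$-group hypothesis bites: it is Theorem~\ref{k:1}(f)(i) (for the fixed components) and the structure of simple $K$-groups underlying Lemma~\ref{p:10} (for the remaining cases) that make the reductions go through. The more delicate step is the closing identification $[G,R] \leq \op{2}{G}$: group-theoretically a constrained $K$-group with $\op{2'}{G}$ central can still be nonsolvable (for instance if $\aut{\op{2}{G}}$ has nonsolvable sections acting), so ruling out odd-order action on $\op{2}{G}$ appears to require invoking the module hypothesis once more — either directly via \cite{F1} or by re-running the component analysis inside $\aut{\op{2}{G}}$.
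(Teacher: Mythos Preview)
Your argument diverges from the paper's in one essential respect: you proceed directly, whereas the paper argues by minimal counterexample. Much of what you do is correct and in places cleaner than the paper --- your elimination of $\layerr{G}$ is a good example. You apply Theorem~\ref{k:1}(f)(i) directly to any component $L$ with $[L,R]\neq 1$ (and Lemma~\ref{p:10} to an abelian diagonal $t$-subgroup when $R$ moves components) to obtain $[\layerr{G},R]=1$, and then the Three Subgroups Lemma, exactly as you indicate, gives $[G,\layerr{G}]=[[G,R],\layerr{G}]=1$, so $\layerr{G}=1$. The analogous argument gives $\op{2'}{G}\leq\zz{G}$. All of this is fine.

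The gap you flag at the end, however, is genuine and is precisely where the minimal-counterexample framework earns its keep. After your reductions you have $G=[G,R]$ constrained with $\gfitt{G}=\op{2}{G}\times\op{2'}{G}$ and $\op{2'}{G}\leq\zz{G}$; but $G/\cz{G}{\op{2}{G}}$ embeds in $\aut{\op{2}{G}}$, and this can certainly be nonsolvable (think of extensions by $\gl{n}{2}$). Nothing you have established forces $G$ to be a $2$-group. Your suggestion to ``re-run the component analysis inside $\aut{\op{2}{G}}$'' does not go through directly: there is no given module for $\aut{\op{2}{G}}$, only $V$, and to leverage $V$ against the odd part of $G$ above $\op{2}{G}$ one needs exactly the inductive control that you lack.

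The paper obtains that control via Claim~1: in a minimal counterexample, every proper $R$-invariant $H$ with $H=[H,R]$ has $H/\op{p}{H}$ trivial or a nonabelian $2$-group (this follows by applying minimality to $H/\cz{H}{U}$ for each irreducible constituent $U$ of $V_{RH}$ and then intersecting kernels via Lemma~\ref{p:9}(b)). Claim~1 is then used repeatedly --- first to prove $\ff{G}\leq\zz{G}$ (not merely $[\ff{G},R]=1$), and then to force any component $K$ with $K=[K,R]$ to equal $G$, so that $G$ is quasisimple and Theorem~\ref{k:1}(f)(i) gives the contradiction. Without an inductive hypothesis of this sort, your final step appears to require essentially reproving the theorem for the constrained case, which is circular.
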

\begin{proof}
    Assume false and let $G$ be a minimal counterexample.
    By \cite[Theorem~5.1]{F1}, $G$ is nonsolvable.
    Now $R[G,R] \normal RG$ so $V_{R[G,R]}$ is completely reducible
    by Clifford's Theorem.
    Then \CoprimeActionComm\ and the minimality of $G$ imply $G = [G,R]$.
    If $p=0$ then define $\op{p}{H} = 1$ for any group $H$.
    Since $V$ is completely reducible we have $\op{p}{G} = 1$.

    \setcounter{Claim}{0}
    \begin{Claim}\label{m:1:Claim1}
        Let $H$ be a proper $R$-invariant subgroup of $G$.
        \begin{enumerate}
            \item[(a)]  Suppose $H = [H,R]$.
                        Then $H/\op{p}{H}$ is either trivial or a nonabelian $2$-group.

            \item[(b)]  Suppose $H$ is a $q$-group for some prime $q \not= p$.
                        If $q=2$ assume $H$ is abelian.
                        Then $[H,R] = 1$.
        \end{enumerate}
    \end{Claim}
    \begin{proof}
        (a). Let $U$ be an irreducible constituent of $V_{RH}$.
        Irreducibility implies $\op{p}{H/\cz{H}{U}} = 1$ and then the minimality
        of $G$ implies $H/\cz{H}{U}$ is either trivial or a nonabelian $2$-group.
        Lemma~\ref{p:9}(b) implies $\cap\cz{H}{U} = \op{p}{H}$ where $U$
        ranges over the irreducible constituents of $V_{RH}$,
        so the result follows.

        (b). This follows from \CoprimeActionComm\ and (a).
    \end{proof}

    \begin{Claim}\label{m:1:Claim2}
        $\ff{G} \leq \zz{G} \leq \cz{G}{R}$.
    \end{Claim}
    \begin{proof}
        Complete reducibility implies $\op{p}{G} = 1$.
        Then Claim~\ref{m:1:Claim1}(b) implies $\zz{G} \leq \cz{G}{R}$.
        Assume that $\ff{G} \not\leq \zz{G}$.
        Since $G = [G,R]$ it follows that $[\ff{G},R] \not= 1$.
        Claim~\ref{m:1:Claim1}(b) implies $[\op{q}{G},R] = 1$
        for each prime $q \not\in \listset{2,p}$.
        As $\op{p}{G} = 1$ we deduce that $[\op{2}{G},R] \not= 1$ and that $p \not= 2$.

        Let $C = \cz{G}{\op{2}{G}}$.
        Now $[C,R] \normal C \normal G$ so
        $\op{p}{[C,R]} \leq \op{p}{C} \leq \op{p}{G} = 1$.
        Claim~\ref{m:1:Claim1}(a) implies $[C,R]$ is a $2$-group,
        whence $[C,R] \leq \op{2}{C} \leq \op{2}{G}$.
        As $C = \cz{G}{\op{2}{G}}$ we obtain $[C,R] \leq \zz{\op{2}{G}}$.
        Using \CoprimeActionComm\ and Claim~\ref{m:1:Claim1}(b) we have
        $[C,R] = [C,R,R] \leq [\zz{\op{2}{G}},R] = 1$.
        As $C \normal RG$ and $G = [G,R]$ we deduce that $C \leq \zz{G}$.

        Let $t\not=2$ be a prime.
        By \CoprimeActionSyl\ there exists an
        $R$-invariant Sylow $t$-subgroup $T$ of $G$.
        Set $H = T\op{2}{G}$ and $H_{0} = [H,R] \normal H$.
        Then $H$ is solvable so $H \not= G$.
        Now $\op{p}{H_{0}} \leq \op{p}{H} \leq \cz{G}{\op{2}{G}} \leq \zz{G}$
        so as $\op{p}{G} = 1$ we deduce that $\op{p}{H_{0}} = 1$.
        Claim~\ref{m:1:Claim1}(a) implies that $H_{0}$ is a $2$-group.
        Since $[T,R] \leq H_{0}$ and $t\not=2$ we deduce that $\cz{G}{R}$
        contains a Sylow $t$-subgroup of $G$ for each prime $t\not=2$.

        Let $S$ be an $R$-invariant Sylow $2$-subgroup of $G$.
        The previous paragraph implies $G = \cz{G}{R}S$.
        Then $G = [G,R] \leq S$,
        contrary to $G$ being nonsolvable.
        We deduce that $\ff{G} \leq \zz{G}$.
    \end{proof}

    \CoprimeActionGFitt\ implies $[\gfitt{G},R] \not= 1$ so as
    $[\ff{G},R] = 1$ there exists $K \in \compp{G}$ with $[K,R] \not= 1$.

    \begin{Claim}\label{m:1:Claim3}
        $R$ normalizes $K$.
    \end{Claim}
    \begin{proof}
        Assume false.
        Let $K_{1},\ldots,K_{r}$ be the $R$-conjugates of $K$.
        Define $L = \listgen{K_{1},\ldots,K_{r}}$ and $\br{L} = L/\zz{L}$.
        Then $L = K_{1}*\cdots *K_{r}$ and $\br{L} = \br{K_{1}}\times\cdots\times\br{K_{r}}$.
        Choose $q \in \primes{\br{K_{1}}}$ with $q\not=p$
        and let $\br{Q_{1}} \leq \br{K_{1}}$ have order $q$.
        Let $Q_{1} \leq K_{1}$ be a $q$-subgroup that maps onto $\br{Q_{1}}$.
        Then $Q_{1}$ is abelian.
        Let $Q_{1},\ldots,Q_{r}$ be the $R$-conjugates of $Q_{1}$
        and set $Q = Q_{1}*\cdots * Q_{r}$.
        Then $[Q,R] \not= 1$ since $R$ does not normalize $\br{Q_{1}}$.
        But $Q$ is abelian so Claim~\ref{m:1:Claim1}(b) implies $[Q,R] = 1$,
        a contradiction.
        The claim is established.
    \end{proof}

    Now $K$ is quasisimple and $[K,R] \not= 1$ so $K = [K,R]$.
    Claim~\ref{m:1:Claim1}(b) forces $G = K$.
    Theorem~\ref{k:1}(f) supplies a contradiction.
\end{proof}

The next result is a partial extension of the main result of
\cite{F1} to nonsolvable groups.

\begin{Theorem}\label{m:2}
    Let $R$ be a group of prime order $r$ that acts coprimely
    on the $K$-group $G$.
    Let $V$ be an $RG$-module, possibly of mixed characteristic,
    with $V_{[G,R]}$ faithful and completely reducible.
    Suppose that \[
        K \in \compp{\kernelon{\cz{G}{R}}{\cz{V}{R}}}.
    \]
    Then $K \in \compp{G}$.
\end{Theorem}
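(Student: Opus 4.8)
The plan is to argue by minimal counterexample: after reducing $G$ to a group built from a single $R$-component, the position of $K$ is located by means of the structure theory of §\ref{aqs}, and a contradiction is extracted from Theorem~\ref{k:1}(f). Write $C = \cz{G}{R}$, $W = \cz{V}{R}$ and $X = \kernelon{C}{W}$. Since $X \normal C$, every component of $X$ is a component of $C$, so $K \in \compp{C}$; in particular $K$ is quasisimple and $[K,W] = 1$. Let $(G,R,V)$ be a counterexample with $\card{G}$ minimal. Then $G = \listgen{K^{G}}[G,R]$: writing $H$ for the right-hand side, $H$ is an $R$-invariant normal subgroup of $G$ with $[H,R] = [G,R] = [G,R,R]$, $V_{[H,R]} = V_{[G,R]}$ is faithful and completely reducible, and since $K \leq \cz{H}{R} \leq C$, intersecting a subnormal series for $K$ in $C$ with $\cz{H}{R}$ yields $K \in \compp{\cz{H}{R}}$, hence $K \in \compp{\kernelon{\cz{H}{R}}{W}}$. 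If $H \neq G$, minimality forces $K \in \compp{H}$, so $K \subnormal H \normal G$ and $K \in \compp{G}$, contrary to assumption.

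Standard arguments, using minimality and the faithfulness of $V_{[G,R]}$, next show that $K$ centralises $\ff{G}$. As $\cz{G}{\gfitt{G}} = \zz{\ff{G}}$ is abelian while $K$ is perfect and nonabelian, $[K,\gfitt{G}] \neq 1$; hence $[K,\layerr{G}] \neq 1$, and we may choose a component $L$ of $G$ with $[K,L] \neq 1$. Let $J = \listgen{L^{R}}$ be the $R$-component of $G$ through $L$. Since $J$ is $R$-quasisimple, $[J,R] = 1$ or $J$, the intermediate case being ruled out by Lemma~\ref{p:5}. If $[J,R] = 1$ then $J = L \in \compp{C}$; but distinct components of a group centralise one another, so $[K,L] \neq 1$ forces $K = L \in \compp{G}$, a contradiction. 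So $[J,R] = J$, whence $J \leq [G,R]$, and $V_{J}$ is faithful and — by Clifford's Theorem applied along a subnormal series from $J$ to $[G,R]$ — completely reducible.

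The main work is to continue the reductions until $\layerr{G} = J$ and, modulo $\ff{G}$ and $\cz{G}{J}$, $G = J[G,R]$. This draws on the structure theory of §\ref{aqs} for the $R$-quasisimple group $L$ — in particular Lemmas~\ref{aqs:5}, \ref{aqs:6} and \ref{aqs:10}, which describe the $R\cz{L}{R}$-invariant subgroups of $L$ and locate $\layerr{\cz{L}{R}}$ — together with \CoprimeActionQuot, Lemma~\ref{p:8} (which, once $\compp{G}$ has become a single $R$-orbit, forces $K$ to normalise each component of $J$), and repeated appeals to minimality to discard the remaining components, the $G$-conjugates of $K$ avoiding $J$, and $\ff{G}$. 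The outcome is that $K \leq \cz{J}{R}$ and $K = \layerr{\cz{J}{R}}$, where either $J = L$ is quasisimple with $[L,R] = L$, or $J$ is a central product of $r$ copies of $L$ permuted regularly by $R$ and $K = \cz{J}{R}$ is a central quotient of $L$. This localisation — in essence, determining exactly where a component of $\cz{G}{R}$ that is not a component of $G$ can sit — is the chief obstacle, and it is here that the $K$-group hypothesis, via Theorem~\ref{k:1}(a)--(d), is essential.

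It remains to contradict $K \leq \kernelon{C}{W}$. As $J \leq [G,R]$, the module $V_{J}$ is faithful and completely reducible, so we may fix an irreducible constituent $U$ of $V_{RJ}$ on which $J$ acts nontrivially; factoring out the (central) kernel of the action of $J$ on $U$, the pertinent components become quasisimple covers of $L/\zz{L}$, and $\cz{U}{R}$ is, by \CoprimeActionQuot, a section of $W$. When $J = L$, Theorem~\ref{k:1}(f)(ii) shows at once that $\layerr{\cz{J}{R}}$ acts faithfully on $\cz{U}{R}$; when $J$ is a product of $r$ conjugates of $L$, Theorem~\ref{m:1} — applicable since $[J,R] = J$ is nonsolvable — shows that $\pring{F}{R}$ is a direct summand of $V_{R}$, and a short necklace argument again gives that $\cz{J}{R}$ acts nontrivially on $\cz{U}{R}$. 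In both cases $K = \layerr{\cz{J}{R}}$ acts nontrivially on $\cz{U}{R}$, hence on $W$, contradicting $[K,W] = 1$. This contradiction completes the proof.
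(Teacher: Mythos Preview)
Your outline has the right overall architecture---reduce to a single $R$-component, then derive a module-theoretic contradiction---but two of the steps you label as routine are precisely where the work lies, and neither is covered by what you have written.

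First, the assertion that ``standard arguments, using minimality and the faithfulness of $V_{[G,R]}$'' give $[K,\ff{G}] = 1$ is a genuine gap. From $K \in \compp{\cz{G}{R}}$ you know only that $[K,\cz{\ff{G}}{R}] = 1$; passing from this to $[K,\ff{G}] = 1$ is not a minimality argument. The paper works inside $M = K\ff{G}$, notes that $[M,R] = [\ff{G},R]$ is solvable with $V_{[M,R]}$ completely reducible by Clifford, takes the subnormal closure $L$ of $K$ in $M$, and then invokes a substantial external structural result (\cite[Theorem~A]{F1}) to obtain $L = K(S \times P)$ with tightly constrained $S$ and $P$; only then does $[K,\ff{G}] = 1$ drop out. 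Once this is in hand, the reduction you describe as ``the main work'' is in fact a one-line application of Theorem~\ref{k:4}(c) together with Lemma~\ref{p:2}(a): these force $K \leq \cz{X}{R}$ for the $R$-component $X$ with $[K,X] \neq 1$, and since $V_{X}$ is completely reducible one simply replaces $G$ by $X$. No appeal to Lemmas~\ref{aqs:6} or \ref{aqs:10} or to Lemma~\ref{p:8} is needed at this stage.

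Second, your endgame when $J$ is a central product of $r$ copies of $L$ does not go through as stated. Theorem~\ref{m:1} yields only that $\pring{F}{R}$ is a direct summand of $V_{R}$, hence $\cz{V}{R} \neq 0$; it says nothing about how $\cz{J}{R}$ acts there, and a ``short necklace argument'' on homogeneous components does not obviously produce faithfulness of $\cz{J}{R}$ on $\cz{U}{R}$. The paper takes a different and concrete route: after reducing (again by induction, passing to an irreducible $RG$-submodule of $[V,G]$) to $V$ irreducible, it uses Burnside's $p^{\alpha}q^{\beta}$-Theorem and Frobenius' normal $p$-complement criterion to find, inside $K_{1}$, a $t$-group $T_{1}$ and a $t'$-group $S_{1}$ with $1 \neq T_{1} = [T_{1},S_{1}]$ for some odd prime $t \neq \fieldchar{F}$; setting $T = \listgen{T_{1}^{R}}$ and $S = \listgen{S_{1}^{R}}$ one checks $T = [T,\cz{S}{R}]$, and Lemma~\ref{p:11} (with $\cz{S}{R}$ playing the role of $S$) then forces $[V,[T,R]] = 0$, contradicting $[T,R] \neq 1$. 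This construction, not Theorem~\ref{m:1}, is what closes the non-quasisimple case.
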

\begin{proof}
    Let $F = \ff{G}$ and $M = KF$.
    Now $K \in \compp{\cz{G}{R}}$ whence $[K,\cz{F}{R}] = 1$
    and so $K \normal \cz{M}{R}$.
    Also $[M,R] = [F,R] \normal F \cap [G,R] \normal [G,R]$
    so as $V_{[G,R]}$ is completely reducible,
    Clifford's Theorem implies that $V_{[M,R]}$ is also.

    Let $L$ be the subnormal closure of $K$ in $M$.
    Then $L = \listgen{K^{L}}$.
    Now $[M,R]$ is solvable so \cite[Theorem~A]{F1} implies that
    $L = K(S \times P)$ with $S$ a $2$-group; $S = [S,R]$;
    $S' = \cz{S}{R}$; $\cz{K}{S'} = \cz{K}{S}$;
    $P$ a $p$-group for some odd prime $p$ and $K/\cz{K}{P}$ a $2$-group.
    Since $S = [S,R] \leq [M,R] \leq F$ and $S' = \cz{S}{R}$
    we have $[K,S'] = 1$.
    Then $[K,S] = 1$.
    Also, $K$ is perfect so as $K/\cz{K}{P}$ is a $2$-group
    it follows that $K = \cz{K}{P}$.
    Then $K \normal L = \listgen{K^{L}}$ whence $K = L \subnormal M$
    and $K$ is a component of $M$.
    Since $M = KF$ and $F$ is nilpotent,
    we obtain $[K,F] = 1$.

    Since $\cz{G}{\gfitt{G}} = \zz{\ff{G}}$,
    there exists $X \in \comp{R}{G}$ with $[K,X] \not= 1$.
    Now $\cz{X}{R} \subnormal \cz{G}{R}$ and $K \in \compp{\cz{G}{R}}$ so
    $K \leq \cz{X}{R}$ or $[K,\cz{X}{R}] = 1$ by Lemma~\ref{p:2}(a).
    Theorem~\ref{k:4}(c) rules out the second possibility,
    whence $K \leq \cz{X}{R}$.
    In particular, $K \in \compp{\cz{X}{R}}$.
    If $X = \cz{X}{R}$ then $K \subnormal X \subnormal G$
    whence $K \in \compp{G}$.
    Hence we may assume, for a contradiction, that $[X,R] \not= 1$.
    Lemma~\ref{aqs:11} implies that $X = [X,R]$.

    We have $K \leq X = [X,R] \subnormal [G,R]$.
    Clifford's Theorem implies that $V_{X}$ is completely reducible.
    Hence we may assume that $G = X$,
    so $G$ is $R$-quasisimple and $G = [G,R]$.
    In particular, $V_{G}$ is completely reducible and so \[
        V = \cz{V}{G} \oplus [V,G].
    \]

    Let $U$ be an irreducible $RG$-submodule contained in $[V,G]$.
    Now $G$ is $R$-quasisimple so either
    $\cz{G}{U} \leq \zz{G}$ or $\cz{G}{U} = G$.
    The second possibility does not hold since $\cz{V}{G} \cap [V,G] = 0$.
    Thus $\cz{G}{U} \leq \zz{G}$.
    Set $\br{G} = G/\cz{G}{U}$.
    Then $\br{G}$ is $R$-quasisimple,
    $\br{G} = [\br{G},R]$ and $\br{K} \not= 1$.
    Suppose that $U \not= V$.
    By induction, $\br{K} \in \compp{\br{G}}$,
    whence $\br{K} = \br{G}$.
    This is a contradiction since $[\br{K},R] = 1$
    but $[\br{G},R] = \br{G}$.
    We deduce that $V$ is an irreducible $\pring{F}{RG}$-module
    for some field $\field{F}$.

    Theorem~\ref{k:1}(f) implies that $G$ is not quasisimple.
    The remainder of the argument is an extension of Theorem~\ref{k:1}(f)
    to $R$-quasisimple groups that are not quasisimple.
    We remark that no $K$-group hypothesis is required.

    We have $G = K_{1} * \cdots * K_{r}$ where $K_{1},\ldots,K_{r}$
    are quasisimple subgroups that are permuted transitively by $R$.
    Lemma~\ref{aqs:5}(a) implies that $\cz{G}{R}$ is quasisimple.
    Since $K \in \compp{\cz{G}{R}}$ we deduce that \[
        K = \cz{G}{R}
    \]
    and then that $[\cz{V}{R},\cz{G}{R}] = 0$.

    By Burnside's $p^{\alpha}q^{\beta}$-Theorem,
    we may choose $t \in \primes{K_{1}}$ with $t \not\in \listset{2, \fieldchar{F}}$.
    Now $K_{1}$, being quasisimple, is not $t$-nilpotent
    so Frobenius' Normal Complement Theorem implies there exists
    a $t$-subgroup $T_{1} \leq K_{1}$ and a
    $t'$-subgroup $S_{1} \leq \n{K_{1}}{T_{1}}$
    with $1 \not= T_{1} = [T_{1},S_{1}]$.
    Set $T = \listgen{T_{1}^{R}} = T_{1} * \cdots * T_{r}$
    and $S = \listgen{S_{1}^{R}} = S_{1} * \cdots * S_{r}$
    where $T_{1},\ldots,T_{r}$ and $S_{1},\ldots,S_{r}$
    are the conjugates of $T_{1}$ and $S_{1}$ under the action of $R$.

    Considering $\br{G} = G/\zz{G}$,
    we see that $S_{1} \leq \cz{S}{R}(S_{2} * \cdots * S_{r})\zz{K}$
    whence $T_{1} = [T_{1},S_{1}] = [T_{1},\cz{S}{R}]$.
    It follows that $T = [T,\cz{S}{R}]$.
    Now $[\cz{V}{R},\cz{G}{R}] = 0$,
    so we may apply Lemma~\ref{p:11},
    with $\cz{S}{R}$ in the role of $S$,
    to deduce that \[
        [V,[T,R]] = 0.
    \]
    But then $[T,R] = 1$,
    a contradiction since $T_{1} \not\leq \zz{K_{1}}$
    and so $T_{1}$ is not normalized by $R$.
    The proof is complete.
\end{proof}
\section{General Results}\label{g}
The first result is the starting point for the study
of how the global structure of a group that admits
a group of automorphisms is influenced by its
local structure.
The other results are applications of the module results
from \S\ref{m} to composite groups.

\begin{Lemma}\label{g:1}
    Let $A$ be an elementary abelian $r$-group for some prime $r$ that
    acts coprimely on the $K$-group $G$.
    Suppose that $H$ is an $A\cz{G}{A}$-invariant subgroup
    of $G$ and that $K \in \comp{A}{H}$.
    Then there exists a unique $\widetilde{K}$ with \[
        K \leq \widetilde{K} \in \compasol{G}.
    \]
\end{Lemma}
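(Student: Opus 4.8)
The plan is to handle uniqueness and existence separately.

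\emph{Uniqueness.} If $\widetilde K_1,\widetilde K_2\in\compasol{G}$ both contain $K$, I would apply the $(A,\mathrm{sol})$-analogue of Lemma~\ref{d:2}(c) with $\widetilde K_1$ in the role of the component and the $A$-invariant subnormal subgroup $\widetilde K_2$ in the role of $S$. Either $\widetilde K_1\le\widetilde K_2$, in which case $\widetilde K_1$ is an $A$-invariant nonsolvable subnormal subgroup of $\widetilde K_2$ and hence equals $\widetilde K_2$ by the minimality characterisation of $(A,\mathrm{sol})$-components; or $[\widetilde K_1,\widetilde K_2]\le\widetilde K_1\cap\widetilde K_2\le\sol{\widetilde K_1}$, forcing $K\le\sol{\widetilde K_1}$, which is impossible as $K$, being $A$-quasisimple, is nonsolvable.

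\emph{Existence --- reductions.} First I would note that $K$ is itself $A\cz{G}{A}$-invariant: $A$ normalises $K$ since it fixes the $A$-orbit on $\compp{H}$ defining $K$, and $\cz{G}{A}$ normalises $H$, hence permutes $\compp{H}$ while commuting with $A$, hence normalises every $A$-component of $H$; also $K$ is nonsolvable. Passing to $\br G=G/\sol{G}$, using \CoprimeActionQuot\ (the image $\br H$ is $A\cz{\br G}{A}$-invariant and $\br K\in\comp{A}{\br H}$ is a nontrivial, hence $A$-quasisimple, quotient of $K$) and the $(A,\mathrm{sol})$-analogue of Lemma~\ref{d:2}(f), one reduces to the case $\sol{G}=1$: then $\gfitt{G}=\layerr{G}$ is a direct product of nonabelian simple groups, $\cz{G}{\layerr{G}}=1$, and $\compasol{G}=\comp{A}{G}$. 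I would then induct on $\card{G}$. Setting $M=\layerr{G}K$, one checks (from $\cz{G}{\layerr{G}}=1$ and Lemma~\ref{p:2}(a)) that $\sol M=1$ and $\layerr M=\layerr{G}$; so if $M<G$ the inductive hypothesis inside $M$, applied to $K\subnormal H\cap M$, gives $\widetilde K\in\comp{A}{M}$ with $K\le\widetilde K\le\layerr{M}=\layerr{G}\normal G$, whence $\widetilde K\in\comp{A}{G}$. So I may assume $G=\layerr{G}K$; then $G$ is perfect.

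\emph{Existence --- the core commutator argument.} I would now show $K\le\layerr{G}$. If not, then $\layerr{G}\cap K$ is a proper $A$-invariant normal subgroup of $K$, so (as $K/\zz{K}$ is $A$-simple and $K$ perfect) $\layerr{G}\cap K\le\zz{K}$. By \CoprimeActionProd, $\cz{G}{A}=\cz{\layerr{G}}{A}\,\cz{K}{A}$, with both factors normal in $\cz{G}{A}$; hence $[\cz{\layerr{G}}{A},\cz{K}{A}]\le\cz{\layerr{G}}{A}\cap\cz{K}{A}\le\layerr{G}\cap K\le\zz{K}$. Since $\zz{K}$ centralises $\layerr{\cz{K}{A}}$, the Three Subgroups Lemma (using that $\layerr{\cz{K}{A}}$ is perfect) gives $[\layerr{\cz{K}{A}},\cz{\layerr{G}}{A}]=1$, so $\layerr{\cz{K}{A}}\le\cz{G}{\cz{\layerr{G}}{A}}$. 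Writing $\layerr{G}=\widetilde D_1\times\cdots\times\widetilde D_u$ as the product of its $A$-components and applying Theorem~\ref{k:4}(c) to each, $\cz{G}{\cz{\layerr{G}}{A}}=\bigcap_j\cz{G}{\cz{\widetilde D_j}{A}}=\bigcap_j\cz{G}{\widetilde D_j}=\cz{G}{\layerr{G}}=1$. Thus $\layerr{\cz{K}{A}}=1$, i.e.\ $\cz{K}{A}$ is solvable, and Theorem~\ref{k:4}(b) then forces the type of $K$ into $\badfour$.

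\emph{The main obstacle, and the endgame.} The hard part will be to rule out the surviving configuration $G=\layerr{G}K$ with $K\not\le\layerr{G}$ and type of $K$ in $\badfour$: here the crude commutator bounds are exhausted and one must descend into the explicit structure of these four families --- I would use the small $A$-invariant subgroups furnished by Lemmas~\ref{k:2} and~\ref{k:3}, the maximality statements of Lemma~\ref{aqs:7}, and the module results of \S\ref{m} played against $[K,R]=K$ for a suitable $R\le A$ of order $r$ --- to produce a contradiction. Granting this, $G=\layerr{G}=\widetilde D_1\times\cdots\times\widetilde D_u$ with each $\widetilde D_j\normal G$. Finally, if $K$ lay in no single $\widetilde D_j$, I would split $\layerr{G}=\widetilde D_1\times E$ with $K\not\le\widetilde D_1$ and $K\not\le E$; as $K$ is perfect its projections $K\lambda_1\le\widetilde D_1$ and $K\lambda_1'\le E$ are $A$-quasisimple and $K$ maps onto each, so $K\cap\widetilde D_1\normal K\lambda_1$ is proper, hence $K\cap\widetilde D_1\le\zz{K\lambda_1}$; conjugating $K$ by $\cz{\widetilde D_1}{A}\le\cz{G}{A}$ (which normalises $K$ and centralises $E$) moves only the $\widetilde D_1$-coordinate, forcing $[K\lambda_1,\cz{\widetilde D_1}{A}]\le K\cap\widetilde D_1\le\zz{K\lambda_1}$ and hence $[K\lambda_1,\cz{\widetilde D_1}{A}]=1$ since $K\lambda_1$ is perfect; then Theorem~\ref{k:4}(c) gives $K\lambda_1\le\cz{\widetilde D_1}{\cz{\widetilde D_1}{A}}=\zz{\widetilde D_1}=1$, contradicting $K\not\le E$. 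So $K$ lies in $\widetilde D_1$ or in $E<G$; in the first case take $\widetilde K=\widetilde D_1$, in the second apply the inductive hypothesis to $E$. This completes the proof.
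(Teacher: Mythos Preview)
Your approach is substantially more complicated than the paper's and has two genuine problems.

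First, your claim that $\cz{G}{A}$ normalises $K$ is not justified and is in fact false. From ``$\cz{G}{A}$ permutes $\compp{H}$ while commuting with $A$'' you may only conclude that $\cz{G}{A}$ permutes the $A$-orbits on $\compp{H}$, i.e.\ permutes the $A$-components of $H$; it need not fix each one. For a concrete failure, let $A$ have order $r$ and act by cyclic permutation on each of two copies of $K_{1}^{r}$ (with $K_{1}$ simple of order prime to $r$), set $H = G = (K_{1}^{r} \times K_{1}^{r}) \rtimes \listgen{\tau}$ with $\tau$ of order $2$ swapping the two blocks and commuting with $A$: then $\tau \in \cz{G}{A}$ but $\tau$ interchanges the two $A$-components of $H$. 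You invoke this normalisation at two crucial points --- in your core commutator argument (to get $[\cz{\layerr{G}}{A},\cz{K}{A}] \leq \cz{K}{A}$, which needs $\cz{\layerr{G}}{A}$ to normalise $K$) and in your endgame (where you let $\cz{\widetilde D_{1}}{A}$ conjugate $K$ back into itself) --- so the argument does not stand as written.

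Second, you explicitly leave a gap: after reducing to $G = \layerr{G}K$ with $K \not\leq \layerr{G}$ and the type of $K$ in $\badfour$, you only sketch a plan and write ``Granting this''. So existence is not actually established.

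The paper's proof avoids all of this. After the same reduction to $\sol{G} = 1$ (and after arranging $\cz{G}{A} \leq H$, which is harmless since $H \normal H\cz{G}{A}$ so $K \in \comp{A}{H\cz{G}{A}}$), one simply picks $L \in \comp{A}{G}$ with $[L,K] \not= 1$; such $L$ exists because $\cz{G}{\layerr{G}} = 1$. Since $L \subnormal G$ one has $L \cap H \subnormal H$, so the component dichotomy (Lemma~\ref{p:2}(a), applied to the simple components of $K$ and using that $L \cap H$ is $A$-invariant) gives $K \leq L \cap H$ or $[K, L \cap H] = 1$. In the second case $\cz{L}{A} \leq L \cap H$ forces $[K,\cz{L}{A}] = 1$, and then Theorem~\ref{k:4}(c) gives $K \leq \cz{G}{\cz{L}{A}} = \cz{G}{L}$, contradicting $[L,K] \not= 1$. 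Hence $K \leq L$ in one stroke --- no induction on $\card{G}$, no need to first push $K$ into $\layerr{G}$, no case analysis on the type of $K$, and no claim that $\cz{G}{A}$ normalises $K$.
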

\begin{proof}
    We may suppose that $\cz{G}{A} \leq H$.
    Uniqueness is clear since distinct $(A,\mbox{sol})$-components
    have solvable intersection.
    Using \CoprimeActionQuot\ and the correspondence between
    $(A,\mbox{sol})$-components of $G$ and $A$-components of $G/\sol{G}$,
    it suffices to assume $\sol{G} = 1$ and show that $K$ is contained
    in an $A$-component of $G$.

    Since $\sol{G} = 1$ we have $\cz{G}{\layerr{G}} = 1$ so there
    exists $L \in \comp{A}{G}$ with $[L,K] \not= 1$.
    Now $\cz{L}{A} \leq L \cap H \subnormal H$ and $K \in \comp{A}{H}$.
    Since $L \cap H$ is $A$-invariant it follows from Lemma~\ref{p:2}(a)
    that either $K \leq L \cap H$ or $[K,L \cap H] = 1$.
    As $[L,K] \not= 1$,
    Theorem~\ref{k:4}(c) rules out the second possibility.
    Thus $K \leq L$, completing the proof.
\end{proof}

We remark that it is straightforward to construct examples
where $\widetilde{K}$ is not an $A$-component.

\begin{Lemma}\label{g:2}
    Let $R$ be a group of prime order $r$ that acts coprimely on
    the group $G$.
    Suppose that $K$ and $S$ are $R$-invariant subgroups of $G$
    that satisfy:
    \begin{itemize}
        \item   $K = [K,R]$ and $K$ is a $K$-group.
        \item   $K = \oupper{2}{K}$ or $r$ is not a Fermat prime.
        \item   $S$ is $K$-invariant and solvable.
        \item   $KS = \listgen{K^{S}}$.
    \end{itemize}
    Then \[
        KS = \listgen{K,\cz{S}{R}}.
    \]
\end{Lemma}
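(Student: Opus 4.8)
The plan is to argue by a minimal counterexample and to reduce to a situation where Theorem~\ref{m:1} applies. Suppose the result fails and choose a counterexample with $|G|$ minimal; put $L = \listgen{K,\cz{S}{R}}$, so $K \leq L$ and $L \neq KS$. Since $(KS,K,S)$ is again a counterexample, we may assume $G = KS$, so that $S \normal G$ and, by hypothesis, the normal closure $\listgen{K^{G}}$ of $K$ in $G$ equals $G$. As $K = [K,R] \leq [G,R]$ and $[G,R]$ is $R$-invariant and normal in $G$, this forces $G = [G,R]$; also $S \neq 1$. It suffices to show $[S,R] \leq L$: given this, writing $s \in S$ as $s = ct$ with $c \in \cz{S}{R}$ and $t \in [S,R]$ (Coprime Action) yields $K^{s} = (K^{c})^{t} \leq L$, whence $G = \listgen{K^{G}} = \listgen{K^{S}} \leq L$, a contradiction.

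For the reduction I would pick $N \neq 1$ minimal subject to being $R$-invariant, normal in $G$, and contained in $S$. Then $N$ is a minimal normal subgroup of the semidirect product $RG$, so $N$ is an elementary abelian $p$-group and an irreducible $\gfpring{p}{RG}$-module. All the hypotheses pass to $G/N$, so by minimality $G/N = \listgen{\br K,\cz{\br S}{R}}$, which by \CoprimeActionQuot\ gives $G = LN$. Since $L \cap N \normal LN = G$ and is $R$-invariant and contained in $N$, minimality of $N$ forces $L \cap N = 1$ (otherwise $L = G$). Hence $\cz{N}{R} \leq \cz{S}{R} \cap N \leq L \cap N = 1$, so $N = [N,R]$ by Coprime Action and $\gfpring{p}{R}$ is not a direct summand of $N_{R}$, for it has a trivial direct summand while $\cz{N}{R} = 0$. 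A short Schur--Zassenhaus argument, using that no conjugate of $R$ centralises $N$ because $[N,R] = N$, then shows $\cz{RG}{N} = \cz{G}{N}$; setting $G^{*} = G/\cz{G}{N}$, the module $N$ is therefore faithful (and, being irreducible, completely reducible) for $\gfpring{p}{RG^{*}}$, and $G^{*}$ is an $r'$-group and a $K$-group.

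Now $[G^{*},R]$ is the image of $[G,R] = G$, so $[G^{*},R] = G^{*}$, and Theorem~\ref{m:1} gives either $G^{*} = 1$ or $r$ a Fermat prime with $G^{*}$ a special $2$-group. In the latter case the hypothesis forces $\oupper{2}{K} = K$, so the image $K^{*}$ of $K$ in the $2$-group $G^{*}$ satisfies $K^{*} = \oupper{2}{K^{*}} = 1$; so in both cases $[K,N] = 1$. Then $K \leq \cz{G}{N} \normal G$, so $G = \listgen{K^{G}} \leq \cz{G}{N}$, i.e.\ $N \leq \zz{G}$; but then $[L,N] = 1$, together with $G = LN$ and $L \cap N = 1$, makes $L$ normal in $G$, whence $G = \listgen{K^{G}} \leq L \neq G$, a contradiction. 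This would prove the lemma.

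The main obstacle is the application of Theorem~\ref{m:1}: a chief factor of $RG$ inside $S$ need not restrict to a completely reducible $K$-module, so $K$ cannot be fed into the theorem directly. The device is to apply it to $G/\cz{G}{N}$ instead, exploiting the identity $G = [G,R]$ — which is forced by $\listgen{K^{S}} = G$ — to make the hypothesis of Theorem~\ref{m:1} collapse, and exploiting $\oupper{2}{K} = K$ to dispose of the special-$2$-group alternative. The one technical point to be careful about is the verification that $\cz{RG}{N} = \cz{G}{N}$, so that $N$ genuinely is a faithful module over $RG^{*}$; here the coprimeness of $R$ together with $[N,R] = N$ is exactly what is needed.
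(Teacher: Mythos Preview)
Your proof is correct and follows essentially the same route as the paper's: reduce to $G = KS$, pick a minimal $R$-invariant normal subgroup $N \leq S$, use induction and \CoprimeActionQuot\ to get $G = LN$ with $L \cap N = 1$ and hence $\cz{N}{R} = 1$, then apply Theorem~\ref{m:1} to $G/\cz{G}{N}$ to force $N \leq \zz{G}$ and $L \normal G$. The only cosmetic differences are that the paper handles the Fermat alternative by first observing $G = \oupper{2}{G}$ (so $\br{G} = \oupper{2}{\br{G}}$, forcing $\br{G} = 1$ outright) rather than via your $K^{*} = 1$, and that your paragraph reducing to $[S,R] \leq L$ is never actually invoked in the final contradiction.
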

\begin{proof}
    We may assume that $G = KS$, so $S \normal G$.
    Let $V$ be a minimal $R$-invariant normal subgroup of $G$
    contained in $S$.
    Then $V$ is an elementary abelian $p$-group for some prime $p$
    and hence an irreducible $\gfpring{p}{RG}$-module.
    By induction and \CoprimeActionQuot\ we obtain \[
        G = V\listgen{K,\cz{S}{R}}.
    \]
    Suppose that $V \cap \listgen{K,\cz{S}{R}} \not= 1$.
    The choice of $V$ forces $V \leq \listgen{K,\cz{S}{R}}$,
    whence $G = \listgen{K,\cz{S}{R}}$.
    Hence we may suppose that $V \cap \listgen{K,\cz{S}{R}} = 1$.
    Now $V \leq S$ whence $\cz{V}{R} = 1$.

    Set $\br{G} = G/\cz{G}{V}$.
    Now $K = [K,R] \leq [G,R]  \normal G$ so as $G = KS = \listgen{K^{S}}$
    we have $G = [G,R]$ and then $\br{G} = [\br{G},R]$.
    Similarly, if $r$ is a Fermat prime then as $K = \oupper{2}{K}$ we have $G = \oupper{2}{G}$
    and $\br{G} = \oupper{2}{\br{G}}$.
    Theorem~\ref{m:1} implies $\br{G} = 1$.
    Hence $V \leq \zz{G}$ so  \[
        G = \listgen{K^{S}} = \listgen{K^{G}} \leq \listgen{K, \cz{S}{R}}
    \]
    and the proof is complete.
\end{proof}

\begin{Lemma}\label{g:3}
    Let $R$ be a group of prime order $r$ that acts coprimely
    on the group $G$.
    Suppose that $K$ and $S$ are $R$-invariant subgroups of $G$
    that satisfy:
    \begin{itemize}
        \item   $K = [K,R]$ and $K$ is a $K$-group.
        \item   $K$ is perfect.
        \item   $S$ is a $K$-invariant solvable subgroup.
        \item   $\cz{S}{R} \leq \n{G}{K}$.
    \end{itemize}
    Then \[
        S \leq \n{G}{K}.
    \]
    If in addition $\sol{K} = \zz{K}$ then $[S,K] = 1$.
\end{Lemma}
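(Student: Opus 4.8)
The plan is to reduce to the situation where $G = KS$ and $S \normal G$, and then argue by induction on $|G|$, peeling off a minimal $R$-invariant normal subgroup $V$ of $G$ contained in $S$. First I would note that we may replace $G$ by $KS$, so that $S \normal G$ and $G = KS$. If $S \leq \n{G}{K}$ there is nothing to prove, so assume $K$ is not normal in $G$. The key move is to pass to $L = \listgen{K^{G}}$: since $K$ is perfect and $K = L^{(\infty)}$ once $K \normal L$, it will suffice to show $K \normal L$, and then $K \characteristic L \normal G$ gives the first conclusion, with the second following from the Three Subgroups Lemma exactly as in the proof of Lemma~\ref{p:2}(b). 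So the real content is: if $K = [K,R]$ is a perfect $K$-group acted on coprimely by $R$, with $\cz{S}{R} \leq \n{G}{K}$ and $S$ solvable, then $K \normal \listgen{K^{S}}$.

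For that I would induct, choosing $V$ a minimal $R$-invariant normal subgroup of $G = KS$ contained in $S$; so $V$ is an elementary abelian $p$-group. Passing to $\br G = G/V$, the hypotheses persist (solvability of $\br S$, perfectness of $\br K$, $\br K = [\br K, R]$, and $\cz{\br S}{R} = \br{\cz S R} \leq \n{\br G}{\br K}$ by \CoprimeActionQuot), so by induction $\br S \leq \n{\br G}{\br K}$, i.e. $S \leq \n{G}{KV} = \n{G}{K}V$. Thus it remains to handle the case where $S = V$ is an elementary abelian $p$-group that is an irreducible $\gfpring{p}{RG}$-module with $[V,K] \neq 1$ (otherwise $V$ centralizes $K$ and $S \leq \n G K$ trivially). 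Since $\cz{S}{R} = \cz{V}{R} \leq \n{G}{K}$ and $\cz V R$ is $R$-invariant and normal in $\cz G R = \cz K R \cz V R$, one deduces $[\cz V R, K] \leq K \cap V = 1$; so $K$ centralizes $\cz V R$ while $[V,K] \neq 1$. Here I would like to invoke Theorem~\ref{m:1}: $V$ is a faithful irreducible $\gfpring{p}{R\br G}$-module for $\br G = G/\cz G V$, and $\br K = [\br K, R]$ is a perfect $K$-group that acts nontrivially, so $\br K$ cannot be a special $2$-group; hence Theorem~\ref{m:1} forces $\gf p R$ to be a direct summand of $V_R$, giving $\cz V R \neq 0$ and, more to the point, the structural restriction we need to derive a contradiction from $K$ centralizing $\cz V R$ but not $V$.

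The main obstacle I anticipate is precisely this last step: extracting from ``$K$ perfect, $K = [K,R]$, $[\cz V R, K] = 1$, $[V,K] \neq 1$'' a contradiction, since Theorem~\ref{m:1} as stated allows the exceptional conclusion that $r$ is a Fermat prime and $[\br G, R]$ is a special $2$-group. One expects this to be ruled out because $\br K$, being a perfect $K$-group with $\br K = [\br K, R] \neq 1$, contains a nonsolvable section and so cannot lie inside a special $2$-group; more carefully, $\br K \leq [\br G, R]$ and a special $2$-group has no perfect nontrivial subgroups. Once that exception is excluded, $\gf p R$ is a direct summand of $V_R$, and the irreducibility of $V$ together with $K$ centralizing the nonzero submodule $\cz V R$ (which is $K$-invariant since $[K,R]=1$ forces... no — rather since $\cz V R$ is $\cz G R$-invariant and $K \leq \cz G R$) contradicts $[V,K] \neq 1$ by irreducibility. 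I would present the argument in that order: reduce to $G = KS$; reduce to showing $K \normal \listgen{K^{S}}$; induct to reduce to $S = V$ irreducible; show $K$ centralizes $\cz V R$; apply Theorem~\ref{m:1} to the faithful action on $V$; dispose of the special $2$-group exception; conclude $[V,K] = 1$, contradiction; and finally deduce $[S,K] = 1$ when $\sol K = \zz K$ via the Three Subgroups Lemma as in Lemma~\ref{p:2}(b).
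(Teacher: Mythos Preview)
Your approach is essentially correct and rests on the same key ingredient as the paper, namely Theorem~\ref{m:1}, but the paper routes the argument through Lemma~\ref{g:2} applied to the subnormal closure $H$ of $K$ in $G=KS$: since $K$ is perfect, $K = \oupper{2}{K}$, so Lemma~\ref{g:2} gives $H = \listgen{K,\cz{H\cap S}{R}}$, hence $K \normal H$, hence $K = H \subnormal G$, and Lemma~\ref{p:2}(b) finishes. This avoids your inline induction and the reduction to $S=V$, whose bookkeeping is a little delicate; your version essentially re-proves a special case of Lemma~\ref{g:2} on the fly.

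Two wobbles in your write-up are worth flagging. First, the assertion $\n{G}{KV} = \n{G}{K}V$ is not justified at the point you make it; what you actually need (and what makes the reduction to $S=V$ honest) is that once $V \leq \n{G}{K}$ then $K = (KV)' \characteristic KV$, so $\n{G}{KV} \leq \n{G}{K}$. Second, your parenthetical that $\cz{V}{R}$ is $K$-invariant ``since $K \leq \cz{G}{R}$'' is false---indeed $K = [K,R]$---but you do not need it: you had already established $[\cz{V}{R},K] \leq K \cap V = 1$, and that alone makes $\cz{V}{R}$ a $KV$-submodule, after which irreducibility and Theorem~\ref{m:1} (the special $2$-group alternative being excluded by the nontrivial perfect image $\br{K} \leq [\br{G},R]$) give the contradiction.
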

\begin{proof}
    We may assume that $G = KS$, so $S \normal G$.
    Let $H$ be the smallest subnormal subgroup of $G$ that contains $K$.
    Then $H$ is $R$-invariant and
    $H = K(H \cap S) = \listgen{K^{H}} = \listgen{K^{H \cap S}}$.
    Now $K$ is perfect so $K = \oupper{2}{K}$.
    Lemma~\ref{g:2} implies that $H = \listgen{K, \cz{H\cap S}{R}}$.
    Since $\cz{S}{R} \leq \n{G}{K}$ we obtain $K \normal H$
    and then $K = H$,
    so $K \subnormal G$.
    The conclusion follows from Lemma~\ref{p:2}(b).
\end{proof}

\begin{Lemma}\label{g:4}
    Let $R$ be a group of prime order $r$ that acts coprimely
    on the $K$-group $G$.
    Suppose that $G$ is constrained and that $K \in \compp{\cz{G}{R}}$.
    Set $\br{G} = G/\ff{G}$.
    Then $\br{K} \in \compp{\br{G}}$.
    In particular, $K\ff{G} \subnormal G$ and $[K,\sol{G}] \leq \ff{G}$.
\end{Lemma}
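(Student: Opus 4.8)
The plan is to establish $\br K\in\compp{\br G}$, after which the two ``in particular'' clauses follow formally: subnormality of $\br K$ in $\br G=G/\ff G$ lifts to subnormality of $K\ff G$ in $G$, and the image of $\sol G$ in $\br G$, being a solvable normal subgroup, lies in $\sol{\br G}$, which is centralised by the component $\br K$ by Lemma~\ref{p:2}(a).

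I would begin with two observations. First, $\br K$ is quasisimple: $K\cap\ff G\normal K$ is nilpotent, hence solvable, hence contained in $\sol K=\zz K$, so $\br K\isomorphic K/(K\cap\ff G)$ is a nontrivial central quotient of the quasisimple group $K$, and is therefore itself quasisimple. Second, $\cz{\ff G}{R}$ is a solvable normal subgroup of $\cz G R$ and $K\in\compp{\cz G R}$, so Lemma~\ref{p:2}(a) gives $[K,\cz{\ff G}{R}]=1$ (the alternative $K\leq\cz{\ff G}{R}$ is impossible since $K$ is perfect and nontrivial).

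The heart of the argument uses that $G$ is constrained. By Lemma~\ref{p:12}, $\ff G=\bigcap_V\cz G V$ as $V$ ranges over the $R$-chief factors of $G$ lying below $\ff G$, so $\br G$ embeds into $\prod_V G/\cz G V$. Fix one such $V=X/Y$, set $\widetilde G=G/\cz G V$ and let $\widetilde K$ be the image of $K$. Here $\cz G V$ is an $R$-invariant normal subgroup of $G$, so \CoprimeActionQuot\ identifies $\cz{\widetilde G}{R}$ with the image of $\cz G R$; thus $\widetilde K$ is the image of the component $K$ of $\cz G R$, hence trivial or a component of $\cz{\widetilde G}{R}$. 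Applying \CoprimeActionQuot\ to the section $V=X/Y$ shows $\cz V R$ is the image of $\cz X R\leq\cz{\ff G}{R}$, so the second observation gives that $\widetilde K$ centralises $\cz V R$; that is, $\widetilde K$ lies in the normal subgroup $\kernelon{\cz{\widetilde G}{R}}{\cz V R}$ of $\cz{\widetilde G}{R}$. Since a quasisimple subnormal subgroup contained in a normal subgroup is a component of that subgroup, $\widetilde K$ is trivial or lies in $\compp{\kernelon{\cz{\widetilde G}{R}}{\cz V R}}$. Finally $V$ is a faithful irreducible module for $R\widetilde G$, so $V_{[\widetilde G,R]}$ is faithful and completely reducible by Clifford's Theorem, and Theorem~\ref{m:2} applies to give that $\widetilde K$ is trivial or lies in $\compp{\widetilde G}$.

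It remains to descend to $\br G$, and to flag the main obstacle. Put $L=\listgen{\br K^{\br G}}$ and, for each $V$, let $L_V$ be the image of $L$ in $G/\cz G V$, so $L_V=\listgen{\widetilde K^{\widetilde G}}$. Since $\widetilde K$ is trivial or a component of $\widetilde G$, $L_V$ is a central product of $\widetilde G$-conjugates of $\widetilde K$, whence $\widetilde K\normal L_V$. Consequently, for every $\ell\in L$ the subgroup $\br K^\ell$ has the same image as $\br K$ in each factor $G/\cz G V$; as $\br G$ embeds into $\prod_V G/\cz G V$ this forces $\br K^\ell=\br K$, so $\br K\normal L\normal\br G$ and hence $\br K\subnormal\br G$. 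Being quasisimple, $\br K$ is a component of $\br G$. I expect the delicate points to be the verification, in the third paragraph, that the image of $K$ is genuinely a component of the kernel $\kernelon{\cz{\widetilde G}{R}}{\cz V R}$ and that the hypotheses of Theorem~\ref{m:2} survive the passage to $\widetilde G$, together with the bookkeeping in this last paragraph that converts ``component in every factor'' into ``subnormal in $\br G$''; none of it is deep, but it requires care with the coprime-action lemmas and with images of components under quotients.
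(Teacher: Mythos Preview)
Your approach is essentially the paper's: both arguments feed each $R$-chief factor $V$ of $G$ below $\ff G$ into Theorem~\ref{m:2} to conclude that the image $\widetilde K$ of $K$ in $G/\cz G V$ is trivial or a component there, and both invoke Lemma~\ref{p:12} to descend.  The difference is in how the descent is organised, and your version has a genuine gap in the final paragraph.

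From ``$\br K^{\ell}$ and $\br K$ have the same image in every factor $G/\cz G V$'' you infer $\br K^{\ell}=\br K$.  This inference is invalid: an injective homomorphism $\br G \hookrightarrow \prod_V \widetilde G_V$ separates \emph{elements}, but two subgroups of $\br G$ can project to the same subgroup in each coordinate without being equal (think of distinct diagonals in a product).  So the step ``$\br K \normal L$'' is unproven, and with it the conclusion $\br K \subnormal \br G$.  You flagged this bookkeeping as delicate, and it is precisely here that the argument breaks.

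The paper sidesteps the issue by first passing to the subnormal closure $G_0$ of $K$ in $G$ and reducing to $G=G_0$.  Then, for each chief factor $V$, the preimage of $\widetilde K$ in $G$ is subnormal and contains $K$, hence equals $G$; so $\widetilde K = G/\cz G V$ for every $V$.  In particular $[G,R]$ centralises every such $V$, whence $[G,R]\leq \ff G$ by Lemma~\ref{p:12}, and then $G=\cz G R\,[G,R]$ gives $K\ff G \subnormal G$ at once.  Your argument can also be repaired directly: set $\br M=\bigcap_V(\mbox{preimage of }\widetilde K_V\mbox{ in }\br G)$, a subnormal subgroup of $\br G$ as an intersection of subnormal subgroups.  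Since each $\widetilde K_V\leq \cz{\widetilde G_V}{R}$, every element of $[\br M,R]$ has trivial image in each factor, so $\br M\leq \cz{\br G}R=\br{\cz G R}$.  As $\br K\in\compp{\br{\cz G R}}$ and $\br K\leq\br M\leq\br{\cz G R}$, intersecting a subnormal chain for $\br K$ in $\br{\cz G R}$ with $\br M$ gives $\br K\subnormal\br M\subnormal\br G$.
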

\begin{proof}
    Let $G_{0}$ be the smallest subnormal subgroup of $G$ that contains $K$.
    Note that every subnormal subgroup of a constrained group is constrained.
    Then $G_{0}$ is $R$-invariant and constrained.
    Suppose the result has been established for $G_{0}$.
    Then $K\ff{G_{0}} \subnormal G_{0} \normal G$ whence $K\ff{G_{0}} \subnormal G$.
    Now $\ff{G_{0}} \subnormal G$ so $\ff{G_{0}} \leq \ff{G}$
    whence $K\ff{G} \subnormal G$ and the conclusion follows.
    Hence we may assume that $G = G_{0}$.
    In particular, $G = \listgen{K^{G}}$.

    Since $K \in \compp{\cz{G}{R}}$ we have $[K,\cz{\ff{G}}{R}] = 1$.
    Let $V$ be a chief factor of $RG$ contained in $\ff{G}$.
    Then $V$ is an elementary abelian $p$-group for some prime $p$.
    Set $G^{*} = G/\cz{G}{V}$,
    so $V$ is a $\gfpring{p}{RG^{*}}$-module.
    Now $K \in \compp{\cz{G}{R}}$ and $\cz{V}{R} \leq \ff{\cz{G}{R}}$
    so $[K,\cz{V}{R}] = 1$.
    \CoprimeActionQuot\ implies that either \[
        K^{*} = 1 \mbox{ or } K^{*} \in \compp{\kernelon{\cz{G^{*}}{R}}{\cz{V}{R}}}.
    \]
    In the first case, as $G = \listgen{K^{G}}$, we have $G^{*} = 1$.
    In the second case,
    Theorem~\ref{m:2} implies $K^{*} \in \compp{G^{*}}$.
    As $G = \listgen{K^{G}}$ this implies $G^{*} = K^{*}$.
    In particular, $[G^{*},R] = 1$.

    We have shown that \[
        [G,R] \leq \bigcap\cz{G}{V}
    \]
    where $V$ ranges over the chief factors of $RG$ contained in $\ff{G}$.
    Lemma~\ref{p:12} implies that $[G,R] \leq \ff{G}$.
    By \CoprimeActionComm\ we have $G = \cz{G}{R}[G,R]$ so as $K \in \compp{\cz{G}{R}}$
    it follows that $K\ff{G} \subnormal G$.
    This completes the proof.
\end{proof}

\section{Local to global results}\label{l}
\begin{Theorem}\label{l:1}
    Let $r$ be a prime and $A$ an elementary abelian $r$-group
    that acts coprimely on the $K$-group $G$.
    Let $a \in A\nonid$ and let $H$ be an $A\cz{G}{a}$-invariant
    subgroup of $G$.
    Suppose that $K \in \comp{A}{H}$.
    \begin{enumerate}
        \item[(a)]  There exists a unique $\widetilde{K}$ with
                    $K \leq \widetilde{K} \in \compasol{G}$.

        \item[(b)]  If $[K,a] = 1$ then $K = \layerr{\cz{\widetilde{K}}{a}}$.

        \item[(c)]  If $[K,a] \not= 1$ then $K = [K,a] = \widetilde{K}$.
                    In particular, $K \in \comp{A}{G}$.

        \item[(d)]  If $\widetilde{K}$ is constrained then
                    $\widetilde{K} = K\ff{\widetilde{K}}$ and $[K,a] = 1$.
                    In particular, $K$ is an $A$-component of $G$ modulo $\ff{G}$.

        \item[(e)]  Suppose $L \in \compasol{G}$ with $\widetilde{K} \not= L$
                    and $L = [L,a]$.
                    Then $[\widetilde{K},L] = 1$.
    \end{enumerate}
\end{Theorem}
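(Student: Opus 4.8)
The plan is this. Part (a) is immediate: since $A$ is abelian, $\cz{G}{A}\leq\cz{G}{a}$, so $H$ is $A\cz{G}{A}$-invariant and Lemma~\ref{g:1} provides the existence and uniqueness of $\widetilde K$. For (b)--(e) I would first reduce to the case $G=\widetilde K$. Intersecting a subnormal series from $K$ to $H$ with $\widetilde K$ shows $K\subnormal H\cap\widetilde K$, so $K\in\comp{A}{H\cap\widetilde K}$, and $H\cap\widetilde K$ is $A\cz{\widetilde K}{a}$-invariant (it is normalised by $A$, by $\widetilde K$, and by $\cz{\widetilde K}{a}\leq\cz{G}{a}$). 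Conversely $\widetilde K\subnormal G$ gives $\ff{\widetilde K}\leq\ff{G}$ and $\widetilde K\cap\ff{G}=\ff{\widetilde K}$, so conclusions obtained inside $\widetilde K$ transfer to $G$; e.g.\ $\widetilde K=K\ff{\widetilde K}$ forces $K\ff{G}/\ff{G}=\widetilde K\ff{G}/\ff{G}$ to be $A$-quasisimple and subnormal in $G/\ff{G}$. Since an $(A,\mathrm{sol})$-component is generated by an $A$-orbit of sol-components, each of which is constrained or semisimple (as noted after Definition~\ref{d:3}), $\widetilde K$ is itself constrained or semisimple, and the argument splits accordingly.

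\emph{The semisimple case} ($\widetilde K$ is $A$-quasisimple, $\sol{\widetilde K}=\zz{\widetilde K}$). Now the results of \S\ref{aqs} govern the action of $A$ on $\widetilde K$. Lemma~\ref{aqs:10} gives $K=\layerr{H\cap\widetilde K}$ and that $K$ is overdiagonal or underdiagonal, and Lemma~\ref{aqs:11} forces $[K,a]=1$ or $[K,a]=K$. If $[K,a]=K\neq1$: $\cz{\widetilde K}{a}$ normalises $H\cap\widetilde K$, hence its layer $K$, so $M:=K\cz{\widetilde K}{a}$ is an $A$-invariant subgroup with $\cz{\widetilde K}{a}\leq M\leq\widetilde K$ and $M^{(\infty)}\geq K$ while $K\not\leq\cz{\widetilde K}{a}$; Corollary~\ref{aqs:9} gives $M=\widetilde K$, so $[\widetilde K,a]=[\cz{\widetilde K}{a}K,a]=[K,a]=K\neq1$, and Lemma~\ref{aqs:11} applied to $\widetilde K$ yields $[\widetilde K,a]=\widetilde K$, i.e.\ $K=\widetilde K$; this is (c). If $[K,a]=1$: then $K\leq\cz{\widetilde K}{a}$, and writing $K=\cz{\widetilde K}{B}$ for a suitable $B\leq A$ by Lemma~\ref{aqs:6} and comparing with the structure and uniqueness statements for $\cz{\widetilde K}{a}$ in Lemma~\ref{aqs:5} identifies $K$ with $\layerr{\cz{\widetilde K}{a}}$; this is (b).

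\emph{The constrained case.} Here (c), already available, forces $[K,a]=1$: otherwise $K=\widetilde K$, contradicting that $K$ is semisimple while $\widetilde K$ is nonsolvable and constrained. Thus $K\leq\cz{\widetilde K}{a}$, and the goal is to reach the hypotheses of Lemma~\ref{g:4}, with $\listgen{a}$ in the role of $R$ and $\widetilde K$ constrained: arranging that the ordinary components of $K$ are ordinary components of $\cz{\widetilde K}{a}$ and reassembling $A$-equivariantly, Lemma~\ref{g:4} yields $\widetilde K=K\ff{\widetilde K}$, which is (d). I expect this to be the principal obstacle: passing from ``$K$ is an $A$-component of the $A\cz{\widetilde K}{a}$-invariant subgroup $H\cap\widetilde K$'' to ``$K$ meets $\cz{\widetilde K}{a}$ in its layer'' cannot appeal to the subgroup classification of \S\ref{aqs}, since $\widetilde K$ is constrained; the module Theorems~\ref{m:1} and~\ref{m:2}, through Lemma~\ref{g:4}, must do the real work.

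\emph{Part (e).} In $\br G=G/\sol{G}$ the images $\br{\widetilde K},\br L$ are distinct $A$-components (by the correspondence $\compasol{G}\leftrightarrow\comp{A}{\br G}$), hence are generated by disjoint $A$-orbits of components of $\br G$ and so commute; thus $[\widetilde K,L]\leq\sol{G}$, and since distinct $(A,\mathrm{sol})$-components normalise one another, $[\widetilde K,L]\leq\sol{G}\cap\widetilde K\cap L\leq\sol{\widetilde K}\cap\sol{L}$. If $\widetilde K$ is semisimple this finishes it: $\sol{\widetilde K}=\zz{\widetilde K}$, so $L$ acts on the perfect group $\widetilde K$ trivially modulo $\zz{\widetilde K}$ and Lemma~\ref{p:5} gives $[\widetilde K,L]=1$ (the hypothesis $L=[L,a]$ is not needed here). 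If $\widetilde K$ is constrained I would use (d), that $\widetilde K=K\ff{\widetilde K}$ with $[K,a]=1$: reducing via the Three Subgroups Lemma and perfectness of $\widetilde K$ to showing that $\widetilde K$ centralises $[\widetilde K,L]$, one is left to control the action of $L$ on $\ff{\widetilde K}$, and here a coprime-action argument in the spirit of Theorem~\ref{m:1}, together with $L=[L,a]$, should force $[\widetilde K,L]=1$. This constrained subcase, like the proof of $\widetilde K=K\ff{\widetilde K}$ in (d), is where the genuine difficulty lies.
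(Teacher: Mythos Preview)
Your organization has a genuine circularity in the constrained case. You prove (c) only under the assumption that $\widetilde K$ is $A$-quasisimple --- both Lemma~\ref{aqs:10} (to identify $K$ as the layer of $H\cap\widetilde K$) and Corollary~\ref{aqs:9} (to force $M=\widetilde K$) require this. You then open the constrained case with ``Here (c), already available, forces $[K,a]=1$'', but (c) is \emph{not} available: in the constrained case the statement of (c) is equivalent to the assertion $[K,a]=1$, so invoking (c) to deduce $[K,a]=1$ is begging the question. The paper closes this gap with Lemma~\ref{g:3}: assuming $K=[K,a]$, one first notes $[K,\cz{\sol{\widetilde K}}{a}]=1$ (because $\cz{\sol{\widetilde K}}{a}$ lies in the solvable normal subgroup $\sol{\widetilde K}\cap H$ of $H$, and components of $H$ centralise such subgroups), and then Lemma~\ref{g:3} gives $[K,\sol{\widetilde K}]=1$, which is incompatible with $\widetilde K$ being constrained. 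This step, not Lemma~\ref{g:4}, is where the module theorem~\ref{m:1} first enters the constrained analysis.

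Two smaller points. First, to get $\cz{\sol{\widetilde K}}{a}\leq H$ one needs the reduction $\cz{G}{a}\leq H$, which the paper makes at the outset (replacing $H$ by $H\cz{G}{a}$, using $K\subnormal H\normal H\cz{G}{a}$); your reduction to $G=\widetilde K$ does not accomplish this, and without it even the passage from $[K,a]=1$ to $K\in\comp{A}{\cz{\widetilde K}{a}}$ is awkward. Second, your argument for (b) in the semisimple case invokes Lemma~\ref{aqs:6}, which applies only when $K$ is overdiagonal; the underdiagonal case is not covered. The paper handles (b) uniformly by passing to $\widetilde K/\sol{\widetilde K}$ and using Lemma~\ref{aqs:5} to see that $\cz{\widetilde K}{a}$ has at most one $A$-component.
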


\noindent Before launching into the proof,
a number of remarks are in order.
Firstly, an important special case is when $A = \listgen{a}$
and $H = \cz{G}{A}$.
Secondly, there are of course two quite different outcomes.
Either $\widetilde{K}$ is semisimple or constrained.
In some senses,
the first outcome is the most desired --
the $A$-component $K$ of $H$ is contained in
the $A$-component $\widetilde{K}$ of $G$.
What part~(d) shows is that in the constrained case,
the situation is quite controlled.
Thirdly, turning to part~(e),
recall that distinct $A$-components of $G$ commute.
This fact plays a crucial role in many arguments.
Although distinct $(A,\mbox{sol})$-components normalize each other,
they do not necessarily commute.
Part~(e) removes the need to be concerned about this phenomena.

\begin{proof}
    Set $R = \listgen{a}$.
    Now $K$ is $R$-invariant so it follows from commutator identities
    that $[K,R] = [K,a]$.
    Also, $K \subnormal H \normal H\cz{G}{a}$ so $K \in \comp{A}{H\cz{G}{a}}$.
    Hence we may assume that $\cz{G}{a} \leq H$.

    (a). This follows from Lemma~\ref{g:1}.

    (b). Suppose $[K,a] = 1$.
    Now $K \in \comp{A}{H}$ so $K \in \comp{A}{\cz{G}{a}}$.
    Then $K \in \comp{A}{\cz{\widetilde{K}}{a}}$.
    Let $\widetilde{K}^{*} = \widetilde{K}/\sol{\widetilde{K}}$,
    so $\widetilde{K}^{*}$ is $A$-simple.
    Lemma~\ref{aqs:5} implies that $\cz{\widetilde{K}}{a}$
    has at most one $A$-component,
    whence $K = \layerr{\cz{\widetilde{K}}{a}}$.

    (c). Since $[K,a] \not= 1$,
    Lemma~\ref{aqs:11} implies $K = [K,a]$.
    Let $S = \sol{\widetilde{K}}$.
    Now $S \cap H$ is a solvable normal subgroup of $H$
    and $K \in \comp{A}{H}$ so $[K, S \cap H] = 1$.
    In particular, $[K,\cz{S}{a}] = 1$.
    Lemma~\ref{g:3} forces $[K,S] = 1$.
    Consequently
    $\cz{\widetilde{K}}{\ff{\widetilde{K}}} \not\leq \ff{\widetilde{K}}$
    so $\widetilde{K}$ is not constrained.
    Since $\widetilde{K} \in \compasol{G}$ it follows that
    $\widetilde{K}$ is $A$-quasisimple.
    Now $\cz{\widetilde{K}}{a} \leq H \cap \widetilde{K}$.
    Moreover, $K^{(\infty)} = K = [K,a] \leq H$ so
    Corollary~\ref{aqs:9} forces $H \cap \widetilde{K} = \widetilde{K}$,
    whence $\widetilde{K} \leq H$.
    Now $K \leq \widetilde{K}$ and $K \in \comp{A}{H}$ so
    $K \in \comp{A}{\widetilde{K}}$ and then $K = \widetilde{K}$
    since $\widetilde{K}$ is $A$-quasisimple.

    (d). Since $\widetilde{K}$ is constrained it is not equal to $K$
    so (c) implies $[K,a] = 1$.
    Then $K \in \comp{A}{\cz{G}{a}}$ and so $K \in \comp{A}{\cz{\widetilde{K}}{a}}$.
    Since $K$ is $A$-quasisimple,
    it is the central product of its components.
    Let $K_{0} \in \compp{K}$.
    Then $K_{0} \in \compp{\cz{\widetilde{K}}{a}}$.
    Lemma~\ref{g:4} implies $K_{0}\ff{\widetilde{K}} \subnormal \widetilde{K}$.
    It follows that $K\ff{\widetilde{K}} \subnormal \widetilde{K}$.
    Now $\widetilde{K}$ is minimal subject to being $A$-invariant,
    nonsolvable and subnormal in $G$ so this forces
    $K\ff{\widetilde{K}} = \widetilde{K}$.
    Finally, $\ff{\widetilde{K}} \leq \ff{G}$ whence $K$ is a
    component of $G$ modulo $\ff{G}$.

    (e). Note that $\widetilde{K}$ and $L$ normalize each other
    and that $[\widetilde{K},L] \leq \sol{G}$ since $\widetilde{K} \not= L$.
    We may assume that $G = \widetilde{K}L$ and that $\sol{G} \not= 1$.
    Let $V$ be a minimal $A$-invariant normal subgroup of $G$ that is
    contained in $\sol{G}$.
    Then $V$ is an elementary abelian $p$-group for some prime $p$.
    Let $G^{*} = G/V$.
    Note that $\widetilde{K}^{*}$ and $L^{*}$ are distinct since their
    commutator is solvable.
    Using \CoprimeActionQuot\ and induction,
    we conclude that $[\widetilde{K}^{*},L^{*}] = 1$.
    Then \[
        [\widetilde{K},L] \leq V.
    \]

    Since $K \in \comp{A}{H}$ and $V \cap H$ is a solvable normal subgroup of $H$
    we have $[K, V \cap H] = 1$.
    In particular $[K, \cz{V}{a}] = 1$,
    so $\cz{V}{a} \leq \cz{V}{K}$.
    Now $[\widetilde{K},L] \leq V \leq \cz{G}{V}$ so the images of $K$ and $L$
    in $\gl{}{V}$ commute.
    In particular, $\cz{V}{K}$ is $L$-invariant.
    Consider the action of $L$ on $V/\cz{V}{K}$.
    Now $\cz{V}{a} \leq \cz{V}{K}$ so \CoprimeActionQuot\ implies that $a$
    is fixed point free on $V/\cz{V}{K}$.
    Since $L = [L,a]$ and $L$ is perfect,
    Theorem~\ref{m:1} implies that $L$ is trivial on $V/\cz{V}{K}$.
    Thus \[
        [V,L] \leq \cz{V}{K}.
    \]

    Recall that $G = \widetilde{K}L$,
    so $L \normal G$.
    Then $[V,L] \normal G$ and the choice of $V$ implies $[V,L] = 1$ or $V$.
    Suppose that $[V,L] = 1$.
    Then $[L,\widetilde{K},L] \leq [V,L] = 1$ and $[\widetilde{K},L,L] = 1$
    so the Three Subgroups Lemma forces $[L,L,\widetilde{K}] = 1$.
    Then $[L,\widetilde{K}] = 1$ since $L$ is perfect.
    Suppose that $[V,L] = V$.
    Then $[V,K] = 1$.
    Again it follows from the Three Subgroups Lemma that $[K,L] = 1$.
    Then $K \leq \cz{\widetilde{K}}{L}$.
    Since $\widetilde{K}$ is $A$-quasisimple and normalizes $L$
    this forces $\cz{\widetilde{K}}{L} = \widetilde{K}$,
    whence $[\widetilde{K},L] = 1$ in this case also.
\end{proof}
\section{An application to signalizer functors}\label{a}
We being by considering an elementary abelian $r$-group
acting coprimely on a $K$-group and using Theorem~\ref{l:1}
to analyze how various local subgroups interact with each other.

\begin{Theorem}[The Local Theorem]\label{a:1}
    Let $r$ be a prime and $A$ an elementary abelian $r$-group
    that acts coprimely on the $K$-group $G$.
    For each $a \in A\nonid$ let
    \begin{align*}
        \Omega_{a} &= \set{ K \in \comp{A}{H} }%
                    { \mbox{$H$ is an $A\cz{G}{a}$-invariant subgroup of $G$} }\\
    \intertext{and}
        \Omega     &= \bigcup_{a \in A\nonid}\Omega_{a}.
    \intertext{For each $K \in \Omega$ set}
        \cstar{K}{A}    &= \left\{\begin{array}{cl}
                                    \cz{K}{A}            & \mbox{if $\cz{K}{A}$ is solvable}\\
                                    \layerr{\cz{K}{A}}   & \mbox{if $\cz{K}{A}$ is nonsolvable}.
                            \end{array}\right.
    \end{align*}
    Let $K,L \in \Omega$,
    so that $K \in \Omega_{a}$ and $L \in \Omega_{b}$
    for some $a,b \in A\nonid$.
    \begin{enumerate}
        \item[(a)]  Suppose $[K,L] \not= 1$.
                    Then there exists a unique $X$ with \[
                        \listgen{K,L} \leq X \in \compasol{G}.
                    \]
                    If $X$ is constrained then
                    $K = L \in \comp{A}{\cz{G}{\listgen{a,b}}}$.

        \item[(b)]  $\cstar{K}{A}$ is nonabelian.

        \item[(c)]  The following are equivalent:
                    \begin{enumerate}
                        \item[(i)]  $[\cstar{K}{A},\cstar{L}{A}] \not= 1$.
                        \item[(ii)] $[K,L] \not= 1$.
                        \item[(iii)]$\cstar{K}{A} = \cstar{L}{A}$.
                    \end{enumerate}

        \item[(d)]  ``Does not commute'' is an equivalence relation on $\Omega$.
    \end{enumerate}
\end{Theorem}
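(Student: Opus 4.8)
The plan is to prove part~(a) first and to deduce (b)--(d) from it together with the structural results of Sections~\ref{aqs}, \ref{k} and \ref{l}.

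\emph{Part (a).} By Theorem~\ref{l:1}(a) there are unique $\widetilde K,\widetilde L\in\compasol G$ with $K\le\widetilde K$ and $L\le\widetilde L$; the heart of the matter is the claim that $\widetilde K=\widetilde L$. Suppose not. By the analogue of Lemma~\ref{d:2} for $(A,\mbox{sol})$-components, $\widetilde K$ and $\widetilde L$ normalize each other and $[\widetilde K,\widetilde L]\le\sol{\widetilde K}\cap\sol{\widetilde L}$; in particular $a$ and $b$ act on each of $\widetilde K,\widetilde L$, and since $\widetilde K/\sol{\widetilde K}$ and $\widetilde L/\sol{\widetilde L}$ are $A$-simple while $\widetilde K,\widetilde L$ are perfect, for any $c\in A$ either $[\widetilde K,c]\le\sol{\widetilde K}$ or $[\widetilde K,c]=\widetilde K$ (and similarly for $\widetilde L$). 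If $\widetilde L=[\widetilde L,a]$, then Theorem~\ref{l:1}(e) forces $[\widetilde K,\widetilde L]=1$, hence $[K,L]=1$, contrary to hypothesis; symmetrically if $\widetilde K=[\widetilde K,b]$. So $[\widetilde L,a]\le\sol{\widetilde L}$ and $[\widetilde K,b]\le\sol{\widetilde K}$; since $K$ and $L$ are $A$-quasisimple we have $\sol K=\zz K$ and $\sol L=\zz L$, so $[K,b]\le K\cap\sol{\widetilde K}\le\zz K$ and $[L,a]\le L\cap\sol{\widetilde L}\le\zz L$, whence Lemma~\ref{p:5} gives $[K,b]=1$ and $[L,a]=1$. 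Now distinguish cases on $[K,a]$. If $[K,a]\ne1$, then $K=\widetilde K$ by Theorem~\ref{l:1}(c), and $[K,L]\le[\widetilde K,\widetilde L]\le K\cap\sol{\widetilde L}\le\zz K$ forces $[K,L]=1$ by the Three Subgroups Lemma (using that $K$ is perfect). If $[K,a]=1$, then $K=\layerr{\cz{\widetilde K}{a}}$ by Theorem~\ref{l:1}(b); since $\cz{\widetilde L}{a}$ centralizes $a$ and normalizes $\widetilde K$, it normalizes $K$, so $[K,\cz{\widetilde L}{a}]\le K\cap\sol{\widetilde K}\le\zz K$ and the Three Subgroups Lemma again yields $[K,\cz{\widetilde L}{a}]=1$; as $L\le\cz{\widetilde L}{a}$ (because $[L,a]=1$), we get $[K,L]=1$. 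In every case we reach a contradiction, so $\widetilde K=\widetilde L=:X$, and then $\listgen{K,L}\le X\in\compasol G$, uniqueness being immediate from Theorem~\ref{l:1}(a). Finally, suppose $X$ is constrained. Theorem~\ref{l:1}(d), applied to $K$ and to $L$, gives $[K,a]=[L,b]=1$ and $K\ff X=X=L\ff X$; consequently $[X,a]=[\ff X,a]\le\ff X\le\sol X$ and likewise $[X,b]\le\sol X$, so the argument just given shows $[L,a]=1=[K,b]$, and one concludes that $K$ and $L$ are each the (unique) $A$-component of $\cz X a$, so $K=L$; then $K=L\le\cz{G}{\listgen{a,b}}$ and $K\in\comp{A}{\cz{G}{\listgen{a,b}}}$.

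\emph{Parts (b)--(d).} For (b): $K$ is a nonsolvable $K$-group on which the elementary abelian $r$-group $A$ acts coprimely, so by Lemma~\ref{aqs:5} either $\cz{K}{A}$ is solvable or $\gfitt{\cz{K}{A}}$ is $A$-quasisimple. In the latter case $\cstar{K}{A}=\layerr{\cz{K}{A}}=\gfitt{\cz{K}{A}}$ is a nontrivial central product of quasisimple groups, hence nonabelian; in the former $\cstar{K}{A}=\cz{K}{A}$, and Theorem~\ref{k:4}(a) (with $K$ nonsolvable) shows $\cz{K}{A}$ is not nilpotent, in particular nonabelian. For (c): (iii)$\Rightarrow$(i) is immediate from (b), since $\cstar{K}{A}=\cstar{L}{A}$ gives $[\cstar{K}{A},\cstar{L}{A}]=[\cstar{K}{A},\cstar{K}{A}]\ne1$; (i)$\Rightarrow$(ii) holds because $\cstar{K}{A}\le K$ and $\cstar{L}{A}\le L$ force $[\cstar{K}{A},\cstar{L}{A}]\le[K,L]$; and for (ii)$\Rightarrow$(iii) take $X$ as in~(a): if $X$ is constrained then $K=L$ by~(a); if $X$ is semisimple then $X$ is $A$-quasisimple with $\widetilde K=\widetilde L=X$, so by Theorem~\ref{l:1}(b),(c), $K$ equals $X$ or $\layerr{\cz{X}{a}}$, and in the latter case Lemma~\ref{aqs:10} (with $\cz{X}{a}$ as the $A\cz{X}{A}$-invariant subgroup) gives $\cz{K}{A}=\cz{X}{A}$ or $\layerr{\cz{K}{A}}=\layerr{\cz{X}{A}}\ne1$; in each case $\cstar{K}{A}=\cstar{X}{A}$, and symmetrically $\cstar{L}{A}=\cstar{X}{A}$, so $\cstar{K}{A}=\cstar{L}{A}$. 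Part~(d) is then formal: reflexivity holds because $K$, being $A$-quasisimple, is nonabelian; symmetry is clear; and transitivity follows by applying~(c) twice.

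The substantive work is confined to part~(a): proving $\widetilde K=\widetilde L$ — which ultimately rests, through Theorem~\ref{l:1}(e), on the module result Theorem~\ref{m:1} — and disposing of the constrained case. I expect the fiddliest point to be the verification in the constrained case that $K$ and $L$ are each recovered as an $A$-component of $\cz{X}{a}$, hence equal, and that $K\in\comp{A}{\cz{G}{\listgen{a,b}}}$.
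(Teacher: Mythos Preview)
Your argument is essentially correct and uses the same key ingredients as the paper (Theorem~\ref{l:1}, Lemma~\ref{aqs:10}, Theorem~\ref{k:4}), but your organisation of part~(a) is genuinely different. The paper first invokes Lemma~\ref{p:2} to obtain the dichotomy ``$\widetilde K,\widetilde L$ both semisimple or both constrained''; the semisimple case is then trivial (distinct $A$-components commute), and in the constrained case the paper goes straight for $K=L$ by showing $[K,a]=[L,a]=[K,b]=[L,b]=1$ and hence $K,L\in\comp{A}{\cz{G}{\listgen{a,b}}}$, which forces $K=L$ and then $\widetilde K=\widetilde L$. You instead assume $\widetilde K\ne\widetilde L$ from the outset, use Theorem~\ref{l:1}(e) symmetrically to get $[\widetilde L,a]\le\sol{\widetilde L}$ and $[\widetilde K,b]\le\sol{\widetilde K}$, and then derive $[K,L]=1$ by case analysis on $[K,a]$ together with the Three Subgroups Lemma. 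Your route avoids the semisimple/constrained split and is perhaps more uniform; the paper's route is shorter because the semisimple case collapses immediately.

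One point needs tightening. In your constrained-case finish you write that ``$K$ and $L$ are each the (unique) $A$-component of $\cz{X}{a}$''. This is fine for $K$ (Theorem~\ref{l:1}(b) gives $K=\layerr{\cz{X}{a}}$), but for $L$ you only know $[L,a]=1$, hence $L\le\cz{X}{a}$; you have not shown $L\subnormal\cz{X}{a}$. What you \emph{do} have is $L=\layerr{\cz{X}{b}}$ by the symmetric application of Theorem~\ref{l:1}(b). The clean fix---which is exactly what the paper does---is to observe that $K\in\Omega_a$ with $[K,a]=1$ gives $K\in\comp{A}{\cz{G}{a}}$, and then $[K,b]=1$ gives $K\in\comp{A}{\cz{G}{\listgen{a,b}}}$; similarly $L\in\comp{A}{\cz{G}{\listgen{a,b}}}$, and now $[K,L]\ne1$ forces $K=L$. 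You yourself flag this step as the fiddliest, and indeed the phrasing as written does not go through: pass to $\cz{G}{\listgen{a,b}}$ rather than $\cz{X}{a}$.

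Parts (b)--(d) match the paper's argument.
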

\begin{proof}
    (a). Theorem~\ref{l:1} implies that there exist
    unique $\widetilde{K}$ and $\widetilde{L}$ with \[
        K \leq \widetilde{K} \in \compasol{G} \mbox{ and } %
        L \leq \widetilde{L} \in \compasol{G}.
    \]
    Then $[\widetilde{K},\widetilde{L}] \not= 1$.
    Using Lemma~\ref{p:2} it follows that
    either $\widetilde{K}$ and $\widetilde{L}$ are
    both semisimple or both constrained.
    Suppose they are both semisimple.
    Since distinct $A$-components commute,
    we have $\widetilde{K} = \widetilde{L}$.
    Put $X = \widetilde{K}$.
    Hence we may assume that $\widetilde{K}$ and $\widetilde{L}$
    are both constrained.

    Theorem~\ref{l:1} implies that $[K,a] = 1$ and
    $\widetilde{K} = K\ff{\widetilde{K}}$.
    Suppose $[\widetilde{L},a]$ is nonsolvable.
    Since $\widetilde{L}$ is an $(A,\mbox{sol})$-component
    it follows that $\widetilde{L} = [\widetilde{L},a]$.
    Also $\widetilde{L} \not= \widetilde{K}$ as
    $[\widetilde{K},a] \leq \ff{\widetilde{K}}$.
    Theorem~\ref{l:1}(e) implies that $[\widetilde{K},\widetilde{L}] = 1$,
    a contradiction.
    Thus $[\widetilde{L},a] \leq \sol{\widetilde{L}}$.
    Then $[L,a] \leq \sol{\widetilde{L}} \cap L \leq \sol{L} = \zz{L}$
    and Lemma~\ref{p:5} implies $[L,a] = 1$.
    To summarize, $[K,a] = [L,a] = 1$.
    Similarly $[K,b] = [L,b] = 1$.
    Now $K \in \Omega_{a}$ and $[K,a] = 1$ so $K \in \comp{A}{\cz{G}{a}}$.
    As $[K,b] = 1$ we have $K \in \comp{A}{\cz{G}{\listgen{a,b}}}$.
    Similarly $L \in \comp{A}{\cz{G}{\listgen{a,b}}}$.
    As $[K,L] \not= 1$,
    this forces $K = L$.
    The uniqueness of $\widetilde{K}$ and $\widetilde{L}$
    forces $\widetilde{K} = \widetilde{L}$.
    Put $X = \widetilde{K}$.

    (b). Lemma~\ref{aqs:5} implies that either
    $\cz{K}{A}$ is solvable or $\layerr{\cz{K}{A}}$ is quasisimple.
    Theorem~\ref{k:4}(a) implies that $\cz{K}{A}$ is nonabelian.
    Hence the result.

    (c). Trivially (i) implies (ii).
    Suppose (ii) holds.
    Choose $X$ as in (a).
    If $X$ is constrained then $K = L$ so $\cstar{K}{A} = \cstar{L}{A}$.
    Suppose $X$ is semisimple.
    Two applications of Lemma~\ref{aqs:10} imply
    $\cstar{K}{A} = \cstar{X}{A} = \cstar{L}{A}$ so (iii) holds.
    By (b), (iii) implies (i).

    (d). Trivially, $\cstar{K}{A} = \cstar{L}{A}$
    defines an equivalence relation on $\Omega$.
\end{proof}

The reader is assumed to be familiar with elementary
Signalizer Functor Theory,
for example the notion of $\theta$-subgroups.
See \cite{F2}.
In the following result,
it is not necessary to assume $G$ to be a $K$-group.
It can be applied to study the $\theta$-subgroups in
a minimal counterexample to the Nonsolvable Signalizer Functor Theorem.

\begin{Theorem}[The Global Theorem]\label{a:2}
    Let $r$ be a prime and $A$ an elementary abelian $r$-group
    with rank at least $3$.
    Suppose that $A$ acts on the group $G$ and that
    $\theta$ is an $A$-signalizer functor on $G$.
    Assume that $\theta(a)$ is a $K$-group for all $a \in A\nonid$.
    For each $a \in A\nonid$ let
    \begin{align*}
        \Omega_{a}  &=  \{ K \in \comp{A}{H} \mid %
                        \begin{array}[t]{l}
                            \mbox{$H$ is a $\theta$-subgroup of $G$},\\
                            \theta(a) \leq H \mbox{ and }\\
                            \mbox{$H$ is a $K$-group.} \}
                        \end{array}
    \intertext{and}
        \Omega     &= \bigcup_{a \in A\nonid}\Omega_{a}.
    \intertext{For each $K \in \Omega$ set}
        \cstar{K}{A}    &= \left\{\begin{array}{cl}
                                    \cz{K}{A}            & \mbox{if $\cz{K}{A}$ is solvable}\\
                                    \layerr{\cz{K}{A}}   & \mbox{if $\cz{K}{A}$ is nonsolvable}.
                            \end{array}\right.
    \end{align*}
    Let $K,L \in \Omega$.
    The following are equivalent:
    \begin{enumerate}
        \item[(i)]  $[\cstar{K}{A},\cstar{L}{A}] \not= 1$.
        \item[(ii)] $[K,L] \not= 1$.
        \item[(iii)]$\cstar{K}{A} = \cstar{L}{A}$.
    \end{enumerate}
    In particular, ``Does not commute'' is an equivalence relation on $\Omega$.
\end{Theorem}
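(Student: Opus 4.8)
The whole proof reduces to the Local Theorem (Theorem~\ref{a:1}), applied inside a suitable $K$-group $\theta$-subgroup. Two of the three implications are immediate or nearly so. For (i)$\Rightarrow$(ii): $\cstar K A\le K$ and $\cstar L A\le L$, so $[\cstar K A,\cstar L A]\le[K,L]$. For (iii)$\Rightarrow$(i): since $K$ is a $K$-group and is $A$-quasisimple, Theorem~\ref{k:4}(a) shows $\cz K A$ is not nilpotent, and Lemma~\ref{aqs:5} shows that $\cstar K A$ is either a nonabelian solvable group or an $A$-quasisimple group; in both cases $[\cstar K A,\cstar K A]\ne 1$ (exactly as in Theorem~\ref{a:1}(b)), so $\cstar K A=\cstar L A$ forces $[\cstar K A,\cstar L A]=[\cstar K A,\cstar K A]\ne 1$. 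Hence the entire content is the implication (ii)$\Rightarrow$(iii); once this is in hand the final assertion follows, because the relation $\cstar K A=\cstar L A$ is obviously an equivalence relation on $\Omega$ (reflexivity holding since $K=[K,K]$).

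\textbf{Reduction of (ii)$\Rightarrow$(iii) to the Local Theorem.} Assume $[K,L]\ne 1$, with $K\in\Omega_a$ and $L\in\Omega_b$; fix $\theta$-subgroups $H_1$ and $H_2$, both $K$-groups, with $\theta(a)\le H_1$, $\theta(b)\le H_2$, $K\in\comp A{H_1}$ and $L\in\comp A{H_2}$. I claim it is enough to produce a single $\theta$-subgroup $H^{*}$ that is a $K$-group and satisfies $H_1\le H^{*}$ and $H_2\le H^{*}$. Indeed, $H^{*}$ is then an $A'$-group, so $A$ acts coprimely on $H^{*}$ (being elementary abelian, $A$ is solvable); since $H^{*}$ is a $\theta$-subgroup we have $\cz{H^{*}}a\le\theta(a)\le H_1$, whence $H_1$ is an $A\cz{H^{*}}a$-invariant subgroup of the $K$-group $H^{*}$, and as $K\in\comp A{H_1}$ the group $K$ lies in the set ``$\Omega_a$'' of Theorem~\ref{a:1} formed with $H^{*}$ in place of $G$; likewise $L$ lies in the corresponding ``$\Omega_b$''. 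Because $\cstar K A$ and $\cstar L A$ depend only on $K$, $L$ and $A$, and not on any ambient group, Theorem~\ref{a:1}(c) applied to $[K,L]\ne 1$ inside $H^{*}$ yields $\cstar K A=\cstar L A$, which is (iii).

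\textbf{The main obstacle.} What remains, and is the heart of the matter, is to construct such an $H^{*}$: when $[K,L]\ne 1$, the $\theta$-subgroups $H_1$ and $H_2$ must be shown to lie in a common $\theta$-subgroup that is a $K$-group. A join of two $\theta$-subgroups need not be a $\theta$-subgroup, so the hypothesis $[K,L]\ne 1$ has to be exploited to localize the interaction. Here the assumption that $A$ has rank at least $3$ enters. One natural route is to choose $c\in A\nonid$ with $\langle a,c\rangle$ and $\langle b,c\rangle$ of rank $2$, pass to $\cz{H_1}c,\cz{H_2}c\le\theta(c)$, apply the Local Theorem inside the $K$-group $\theta(c)$ to tie the relevant fixed-point layers of $K$ and $L$ together there, and then propagate the conclusion back up using Coprime Action (Theorem~\ref{p:4}) together with the balance identity $\cz{\theta(x)}y\le\theta(y)$, obtaining that an appropriate $A$-invariant subgroup between $H_1H_2$ and $\listgen{H_1,H_2}$ is again an $A'$-group, hence a $\theta$-subgroup, and is a $K$-group; that group is then $H^{*}$. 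Verifying simultaneously that this common container is a $\theta$-subgroup and a $K$-group, and handling the degenerate configurations in which the relevant fixed-point subgroups collapse to solvable ones (the types $\ltwo{2^{r}},\ltwo{3^{r}},\uthree{2^{r}},\sz{2^{r}}$ of Lemma~\ref{aqs:7}), is where the real work lies; granted $H^{*}$, the reduction above completes the proof.
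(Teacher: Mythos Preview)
Your handling of (i)$\Rightarrow$(ii) and (iii)$\Rightarrow$(i) is fine, and you are right that (ii)$\Rightarrow$(iii) is the content. But your strategy for (ii)$\Rightarrow$(iii) has a genuine gap and differs essentially from the paper's.

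You reduce to producing a $\theta$-subgroup $H^{*}$ which is a $K$-group and contains both $H_{1}$ and $H_{2}$. You never construct $H^{*}$, and your sketch for doing so is not viable. The sentence ``an appropriate $A$-invariant subgroup between $H_{1}H_{2}$ and $\listgen{H_{1},H_{2}}$ is again an $A'$-group, hence a $\theta$-subgroup'' is incorrect: being an $r'$-group does not make an $A$-invariant subgroup a $\theta$-subgroup; one needs $\cz{X}{c}\le\theta(c)$ for every $c\in A\nonid$, and there is no mechanism here to force that. More fundamentally, since $H_{1}\ge\theta(a)$ and $H_{2}\ge\theta(b)$, your $H^{*}$ would have to contain $\listgen{\theta(a),\theta(b)}$, and in a general signalizer-functor situation there is no reason such a $\theta$-subgroup (let alone a $K$-group one) exists. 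The hypothesis $[K,L]\ne1$ says nothing directly about $\theta(a)$ and $\theta(b)$ away from $K$ and $L$.

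The paper does the opposite: instead of \emph{enlarging} to a common container, it \emph{shrinks} $K$ and $L$ to overdiagonal fixed-point subgroups that both sit inside a single pre-existing $\theta(C)$. Using the generation Lemma~\ref{aqs:12}(b) twice, one finds $B\in\hyp{A}$ and then $C\in\hyp{B}$ with $\cz{K}{C}$ and $\cz{L}{C}$ overdiagonal (hence $A$-quasisimple by Lemma~\ref{aqs:12}(a)) and $[\cz{K}{C},\cz{L}{C}]\ne1$. Rank~$\ge3$ guarantees $C\ne1$, so $M=\theta(C)$ is a $K$-group. Since $K$ is a $\theta$-subgroup, $\cz{K}{C}\le M$; and $\cz{K}{C}\subnormal M\cap H_{1}$ with $\cz{M}{a}\le M\cap\theta(a)\le M\cap H_{1}$, so $\cz{K}{C}$ lies in the local ``$\Omega_{a}$'' for $M$; similarly for $L$. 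The Local Theorem inside $M$ then gives $\cstar{\cz{K}{C}}{A}=\cstar{\cz{L}{C}}{A}$, and because $\cz{K}{C}$ is overdiagonal one has $\cz{\cz{K}{C}}{A}=\cz{K}{A}$, so this is exactly $\cstar{K}{A}=\cstar{L}{A}$. The missing idea in your argument is precisely this descent via Lemma~\ref{aqs:12}; it avoids altogether the problem of building a common $\theta$-subgroup over $H_{1}$ and $H_{2}$.
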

\begin{proof}
    Trivially (i) implies (ii).
    Also (iii) implies (i) by Theorem~\ref{a:1}(b).
    Suppose that (ii) holds.
    Lemma~\ref{aqs:12}(b),
    with $A$ in the role of $A^{*}$,
    implies there exists $B \in \hyp{A}$ with $\cz{K}{B}$ overdiagonal
    and $[\cz{K}{B},L] \not= 1$.
    Another application of Lemma~\ref{aqs:12}(b),
    with $B$ in the role of $A^{*}$,
    implies there exists $C \in \hyp{B}$ with
    $\cz{L}{C}$ overdiagonal and $[\cz{K}{B},\cz{L}{C}] \not= 1$.
    Then $[\cz{K}{C},\cz{L}{C}] \not= 1$ and
    Lemma~\ref{aqs:12}(a) implies that both
    $\cz{K}{C}$ and $\cz{L}{C}$ are $A$-quasisimple.
    Now $A$ has rank at least $3$ so $C \not= 1$
    and then $\theta(C)$ is a $K$-group.
    Set $M = \theta(C)$.

    Since $K \in \Omega_{a}$ there exists a $\theta$-subgroup $H_{a}$
    with $\theta(a) \leq H_{a}$ and $K \in \comp{A}{H_{a}}$.
    Now $K$ is a $\theta$-subgroup so $\cz{K}{C} \leq M$.
    In fact, $\cz{K}{C} \subnormal M \cap H_{a}$ since $K \subnormal H_{a}$
    so as $\cz{K}{C}$ is $A$-quasisimple,
    we have $\cz{K}{C} \in \comp{A}{M \cap H_{a}}$.
    Also, $\cz{M}{a} \leq M \cap \theta(a) \leq M \cap H_{a}$.
    Similarly, there exists a $\theta$-subgroup $H_{b}$
    with $\cz{L}{C} \in \comp{A}{M \cap H_{b}}$
    and $\cz{M}{b} \leq M \cap H_{b}$.
    The Local Theorem,
    with $M$, $\cz{K}{C}$ and $\cz{L}{C}$
    in the roles of $G$, $K$ and $L$ respectively,
    implies that $\cstar{K}{A} = \cstar{L}{A}$,
    so (iii) holds.
\end{proof}
\bibliographystyle{amsplain}

\end{document}